\numberwithin{equation}{section} 
\newtheorem{teo}{Theorem}[section]
\newtheorem{coro}[teo]{Corollary}
\newtheorem{prop}[teo]{Proposition}
\newtheorem{lemma}[teo]{Lemma}
\theoremstyle{definition}
\newtheorem{defin}[teo]{Definition}
\newtheorem{rmk}[teo]{Remark}
\newcommand{\mfk}{\mathfrak}
\def\Oo{\mathcal O}
\def\R{\mathbb{R}}
\def\g{\mfk{g}}
\def\d{\mathrm{d}}
\newcommand{\al}{\alpha}
\newcommand{\be}{\beta}
\def\G{\mathbb{G}}
\newcommand{\m}{\mbox}
\newcommand{\cor}{\textit}
\newcommand{\fine}{\qed\newline}
\DeclareMathOperator{\supp}{supp}
\DeclareMathOperator{\In}{in}
\DeclareMathOperator{\Out}{out}
\def\din{\Omega^{\In}}
\def\dout{\Omega^{\Out}}
\begin{document}
 	\title[Critical singular problems in Carnot groups]{Critical singular problems in Carnot groups}
 	
 	\author[S.\,Biagi]{Stefano Biagi}
 	\author[M.\,Galeotti]{Mattia Galeotti}
 	\author[E.\,Vecchi]{Eugenio Vecchi}
 	
 	\address[S.\,Biagi]{Dipartimento di Matematica
 		\newline\indent Politecnico di Milano \newline\indent
 		Via Bonardi 9, 20133 Milano, Italy}
 	\email{stefano.biagi@polimi.it}
 	
 	\address[M.\,Galeotti]{Dipartimento di Matematica
 		\newline\indent Università  di Bologna \newline\indent
 		Piazza di Porta San Donato 5, 40122 Bologna, Italy}
 	\email{mattia.galeotti4@unibo.it}
 	
 	\address[E.\,Vecchi]{Dipartimento di Matematica
 		\newline\indent Università  di Bologna \newline\indent
 		Piazza di Porta San Donato 5, 40122 Bologna, Italy}
 	\email{eugenio.vecchi2@unibo.it}
 	
 	\keywords{PDEs on Carnot groups, Singular equations, Critical equations}
 	
 	\subjclass[2020]{35R03, 35B33, 35B25, 35J70}
 	
 	\date{\today}
 	
 	\thanks{S.B. and E.V. are
 		member of the {\em Gruppo Nazionale per
 			l'Analisi Ma\-te\-ma\-ti\-ca, la Probabilit\`a e le loro Applicazioni}
 		(GNAMPA) of the {\em Istituto Nazionale di Alta Matematica} (INdAM), and are
 		partially 
 		supported by the PRIN 2022 project 2022R537CS \emph{$NO^3$ - Nodal Optimization, NOnlinear elliptic equations, NOnlocal geometric problems, with a focus on regularity}, founded by the European Union - Next Generation EU and by the Indam-GNAMPA project CUP E5324001950001 - {\em Problemi singolari e degeneri: esistenza, unicità e analisi delle proprietà qualitative delle soluzioni}.\\
 		M.G. is supported by the National Recovery and Resilience Plan (NRRP), research funded by the European Union – NextGenerationEU.
 		CUP D93C22000930002, {\em A multiscale integrated approach to the study of the nervous system in healt and disease (MNESYS)}}

 \begin{abstract}
 	We consider a power-type mild singular perturbation of a Dirichlet semilinear critical problem settled
 	in an open and bounded set in a Carnot group. 
 	Here, the term critical has to be understood in the sense of the Sobolev embedding.
 	We aim to prove
 	the existence of two positive weak solutions: the first one is obtained
 	by means of the variational Perron's method, while  for the second
 	one we adapt a classical argument relying on proper estimates of 
 	a family of functions which mimic the role of the classical
 	Aubin-Talenti functions in the Euclidean setting.
 	
 	Our results fall in the framework of semilinear PDEs in Carnot group but,
 	as far as we know, are the first ones dealing with singular perturbations
 	of power-type.
 \end{abstract}

\maketitle

\section{Introduction}
Let $\mathbb{G}$ be a Carnot group and let $\Omega \subset \mathbb{G}$ be an bounded and connected
open set with smooth enough boundary $\partial \Omega$. Let $\gamma \in (0,1)$, let $2^{\star}_{Q}:=\tfrac{2Q}{Q-2}$ be the critical Sobolev exponent related to the Sobolev inequality in $\mathbb{G}$, and let $\lambda >0$. We consider the following singular Dirichlet problem
\begin{equation}\tag{{$\mathrm{P}$}}\label{eq:Main_Problem}
	\left\{\begin{array}{rl}
		-\Delta_{\mathbb{G}}u = \dfrac{\lambda}{u^{\gamma}} + u^{2^{\star}_{Q}-1} & \textrm{ in } \Omega,\\
		u>0 & \textrm{ in } \Omega,\\
		u=0 & \textrm{ on } \partial \Omega.
	\end{array}\right.
\end{equation}
Along the paper it will sometimes be useful to denote the above problem as \eqref{eq:Main_Problem}$_\lambda$ 
to make it clear the choice of the parameter.
We immediately state the main result of this paper. In what follows,
we refer to Definition \ref{def:weak_sub_super_sol} for the precise definition of \emph{weak solution}
of \eqref{eq:Main_Problem}$_{\lambda}$.
\begin{teo} \label{thm:main}
	Let $\Omega\subset\mathbb{G}$ be an open and bounded set
	with smooth enough boundary $\partial \Omega$, and let $\gamma\in (0,1)$. Then, there exists $\Lambda > 0$
	such that
	\begin{itemize}
		\item[a)] problem \eqref{eq:Main_Problem}$_{\lambda}$ admits at least two weak solutions
		for every $0<\lambda<\Lambda$;
		\item[b)]  problem \eqref{eq:Main_Problem}$_{\Lambda}$ admits at least one weak solution;
		\item[c)]
		problem \eqref{eq:Main_Problem}$_{\lambda}$ does not admit weak solutions
		for every $\lambda>\Lambda$.
	\end{itemize}
\end{teo}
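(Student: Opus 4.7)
The strategy combines the Brezis--Nirenberg analysis for critical problems with the variational Perron method for the singular nonlinearity, both adapted to the subelliptic setting on $\GG$. The central object is the threshold
\[
\Lambda := \sup\bigl\{\lambda > 0 : \text{\eqref{eq:Main_Problem}$_\lambda$ admits a weak solution}\bigr\},
\]
which I would show satisfies $0 < \Lambda < \infty$ and simultaneously witnesses all three conclusions. Positivity $\Lambda > 0$ is obtained by producing, for $\lambda$ small, an ordered sub/supersolution pair: a suitable power of the first Dirichlet eigenfunction $\varphi_1$ of $-\Delta_{\GG}$ plays the role of a subsolution, taming the singularity at the boundary, while the $\GG$-torsion function on $\Omega$ furnishes a supersolution. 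Finiteness $\Lambda < \infty$ follows from pairing the equation with $\varphi_1$: since $\lambda r^{-\gamma} + r^{2^{\star}_Q - 1} \geq C\,\lambda^{\beta}\,r$ for all $r > 0$, with $\beta = (2^{\star}_Q - 2)/(2^{\star}_Q - 1 + \gamma) > 0$, one obtains $\lambda_1 \int_\Omega u\,\varphi_1 \geq C\,\lambda^{\beta} \int_\Omega u\,\varphi_1$, which is impossible for $\lambda$ large. Part (c) is then tautological once the standard monotonicity of solvability in $\lambda$ is verified by sub/super\-solution.

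For the first solution in (a) and for (b), I would apply the variational Perron scheme. Fixing $\lambda < \Lambda$ and choosing $\mu \in (\lambda, \Lambda)$ for which a solution $\overline{u}_\mu$ exists, the function $\overline{u}_\mu$ is a supersolution of \eqref{eq:Main_Problem}$_\lambda$, and together with the subsolution $\underline{u}_\lambda$ above it yields an ordered pair. Minimizing the truncated energy functional on the order interval $[\underline{u}_\lambda, \overline{u}_\mu]$ produces a first weak solution $u_\lambda$, which a Brezis--Nirenberg-type argument shows to be a local minimum of the unconstrained energy $I_\lambda$ on the Folland--Stein space $HW^{1,2}_0(\Omega)$. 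For (b), one takes $\lambda_n \nearrow \Lambda$: monotonicity gives a uniform $HW^{1,2}_0$-bound on $\{u_{\lambda_n}\}$, a weakly convergent subsequence is extracted, and the limit is passed into the weak formulation, with the singular term handled by dominated convergence thanks to the uniform lower bound $u_{\lambda_n} \geq c\,\varphi_1$.

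The genuinely delicate step is the construction of a second solution in (a). Writing $u = u_\lambda + v$ with $v \geq 0$, I would look for a nontrivial critical point of the shifted functional $J_\lambda(v) := I_\lambda(u_\lambda + v) - I_\lambda(u_\lambda)$. Since $u_\lambda$ is a local minimum of $I_\lambda$, the functional $J_\lambda$ enjoys a mountain-pass geometry and yields a candidate min-max level $c_\lambda > 0$. The obstacle is the loss of compactness due to the critical Sobolev term: any Palais--Smale sequence for $J_\lambda$ at level $c_\lambda$ is relatively compact only below the threshold
\[
c^{\star} = \frac{1}{Q}\, S_{\GG}^{\,Q/2},
\]
where $S_{\GG}$ is the sharp constant in the Folland--Stein--Sobolev inequality on $\GG$. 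To establish the strict bound $c_\lambda < c^{\star}$, I would test the mountain-pass paths with the family $U_\varepsilon$ of rescaled, $\GG$-gauge cut off Folland--Stein extremal bubbles concentrated at an interior point of $\Omega$, and perform the asymptotic expansion of $\sup_{t \geq 0} J_\lambda(t U_\varepsilon)$ as $\varepsilon \to 0^+$. The main analytic difficulty is twofold: beyond the Heisenberg case the explicit profile of the extremals is not available in closed form, so one must rely solely on their sharp decay and homogeneity; and the singular contribution must be expanded carefully around $u_\lambda$ so as to produce a leading-order negative correction strong enough to beat the positive remainder in the critical term. Once this threshold estimate is secured, the local Palais--Smale condition and the mountain-pass theorem deliver a second weak solution $u_\lambda + v_\lambda$, completing (a).
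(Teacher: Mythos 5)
Your overall architecture (threshold $\Lambda$, Perron scheme for the first solution, bubble estimates below $\tfrac{1}{Q}S_{\mathbb{G}}^{Q/2}$ for the second) matches the paper's, but there are two concrete gaps.

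\textbf{The sub/supersolution pair does not work.} The $\mathbb{G}$-torsion function $w$, solving $-\Delta_{\mathbb{G}}w=1$ with $w=0$ on $\partial\Omega$, is never a supersolution of the singular problem: near $\partial\Omega$ one has $w\to 0$, hence $\lambda w^{-\gamma}\to+\infty$ and the constant right-hand side is overwhelmed, whatever constant multiple of $w$ you take and however small $\lambda>0$ may be. Likewise, using a power $\varphi_1^\alpha$ of the principal eigenfunction as a subsolution rests on a Hopf-type lower bound $\varphi_1\gtrsim d(\cdot,\partial\Omega)$, which in a Carnot group \emph{fails at characteristic boundary points} even for domains smooth in the Euclidean sense; the vanishing order of eigenfunctions at such points is not controlled, so the pointwise verification that $\varphi_1^\alpha$ is a subsolution breaks down near $\partial\Omega$. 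The paper sidesteps both issues: $\Lambda>0$ is obtained by directly minimizing the truncated energy $I_\lambda$ on a small ball in $S^1_0(\Omega)$, with no supersolution constructed at all (Lemma \ref{lem:Lambdafinito}); the Perron subsolution is the unique solution $w_\lambda$ of the purely singular problem (Theorem \ref{thm:Singular_Problem}), whose strict interior positivity comes from the weak Harnack inequality (Proposition \ref{prop:WHarnack}) and the strong maximum principle (Proposition \ref{prop:SMP}), not from boundary asymptotics; and the Perron supersolution is a solution $u_{\lambda'}$ for some $\lambda'\in(\lambda,\Lambda)$, which exists by the very definition of $\Lambda$.

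\textbf{The mountain-pass argument is incomplete.} Since $u_\lambda$ is only a \emph{local} minimizer, the strict inequality $\inf_{\|u-u_\lambda\|=r}I_\lambda>I_\lambda(u_\lambda)$ may fail for every small $r$; in that degenerate case the mountain-pass level equals $I_\lambda(u_\lambda)$ and the min-max machinery produces nothing new. The paper treats this possibility separately (Subsection \ref{subsec:CaseI}): if the infimum over spheres coincides with the local minimum value, Ekeland's principle on an annulus directly produces a second solution at distance $r$ from $u_\lambda$ with no min-max level at all, and the mountain-pass argument is reserved for the strict-inequality case (Subsection \ref{subsec:CaseII}). Moreover, the singular term $u^{-\gamma}$ makes $I_\lambda$ only continuous and locally Lipschitz (not $C^1$) on the constraint cone $H_\lambda=\{u\geq u_\lambda\}$, so the classical $C^1$ mountain-pass theorem is not applicable; the paper replaces it with generalized directional derivatives and an Ekeland-based deformation argument (Appendix \ref{app:gdd}). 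Your proposed expansion of $\sup_t I_\lambda(u_\lambda+tU_\varepsilon)$ using only the decay of the Folland--Stein extremal is indeed the right threshold estimate -- this is exactly where the Loiudice asymptotics and the Folland--Stein convolution theorem enter -- but without the two structural corrections above the second solution is not obtained.
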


The above theorem is the natural generalization to Carnot groups of classical results of Haitao \cite{Haitao} and Hirano, Saccon and Shioji \cite{Hirano}, where the authors considered critical perturbations of mild singular terms using sub and supersolution methods (\cite{Haitao}) or a Nehari manifold approach (\cite{Hirano}). We stress that similar results have been obtained with different leading operators, see e.g. \cite{GiacMukSre, KRS, BV}.
Before commenting on the proof of Theorem \ref{thm:main}, we want to give a brief account of the existing literature concerning partial differential equations (PDEs, for short) on Carnot groups,
with a focus on \emph{critical semilinear equations}. 
\vspace{0.1cm}

\noindent -\,\,\emph{PDEs on Carnot groups}. To begin with, it is worth mentioning that, if $\mathbb{G}$ is a Carnot group and if $\Delta_\mathbb{G}$ is a sub-Lapla\-cian on $\mathbb{G}$ 
(see Section \ref{sec:Prel} for the relevant definitions), then $\Delta_\mathbb{G}$ is a second-order differential operator with 
\emph{non-negative characteristic form}, and thus it falls in the class of 
\emph{degenerate-elliptic operators.}
These degenerate operators
have appeared in the literature since the early 1900s, due to their appearance
in  models of theoretical physics and of diffusion processes; however,
the lack of \emph{regularizing properties} (caused by the lack of ellipticity) 
creates several difficulties and prevents the application of several techniques.

In this perspective, a major difference between \emph{general}
degenerate-elliptic operators and the sub-Laplacians on Carnot groups
is that $\Delta_\mathbb{G}$
is a \emph{sum of squares} of vector fields satisfying 
the celebrated 
{\em H\"{o}rmander hypoellipticity condition}, see \cite{Hormander}.
Starting from this fact, and by taking benefit of the underlying geometry
attached to $\Delta_\mathbb{G}$ induced by $\mathbb{G}$, 
G.B. Folland, L.P. Rothschild and E.M. Stein developed in the 70's the 
\emph{singular integral theory in nilpotent Lie
groups} (see, e.g., Folland's survey
\cite{Folland_per_Stein} for a torough discussion of the contributions
of Stein in this context). More precisely, in 1975  Folland \cite{Folland} 
accomplished a functional
analytic study of sub-Laplacians on Carnot Lie groups and proved
the existence of an associated \emph{well-behaved global fundamental solution}.
One year later
Rothschild and Stein \cite{RotSte} proved their celebrated 
\emph{lifting theorem}
enlightening the fundamental role played by the sub-Laplacians in the theory of second order PDEs which are sum of squares of vector fields; this remarkable result paved the way for a deep study of PDEs on Carnot groups.
\vspace{0.1cm}

\noindent-\,\,\emph{Critical PDEs on Carnot groups}.
Firstly, when $\mathbb{G}= \mathbb{H}^{n}$ is the Heisenberg group and $\Omega = \mathbb{H}^n$, problem \eqref{eq:Main_Problem}$_0$ (i.e. with $\lambda =0$) coincides with the CR-Yamabe problem which has been deeply studied in a series of papers by Jerison and Lee \cite{JerisonLee, JerisonLee2, JerisonLee3}, in connection with the existence of extremals for the associated Sobolev inequality. Further results concerning the CR-Yamabe problem can be found in e.g. \cite{Gamara,GaYa,ChMaYa}. In \cite{JerisonLee2}, Jerison and Lee actually provided the explicit expression of such extremals, resembling the Euclidean ones by Aubin and Talenti. After the seminal paper by Br\'{e}zis and Nirenberg \cite{BN}, it  became clear that the explicit knowledge of these functions was a key tool to attack the study of critical PDEs both in bounded and unbounded domains, at least in Heisenberg groups.
 In this perspective, we refer e.g.~to \cite{GaLa, BiCa, LuWei, BrRiSe, BiPr, Ugu1, Ugu2, LaUg, CitUg, MaUg, FelliUgu, MaMaPi, PaPiTe} where several existence and non-existence results have been proved for the critical (or slightly sub-critical) equations and to
\cite{Citti, GaUg, MaMa} for perturbation results in the spirit of Br\'{e}zis and Nirenberg. We refer to e.g. \cite{MaMaTr, MolicaRepovs} and the references therein for the case of sign-changing solutions.

The situation may a priori change when considering structures different from Heisenberg groups. In \cite{GaroVa2, GaroVa} Garofalo and Vassilev exhibited a family of minimizers for the Sobolev inequality in groups of Iwasawa type. As far as we know, there are no other structures, nor Sobolev inequalities with $p\neq 2$, for which the minimizers are explicitly known. Nevertheless, it is worth to mention that the best constant in the Sobolev inequality is achieved in all Carnot groups, see \cite{GaroVa2}. Moreover, the asymptotic behaviour at infinity of the minimizers have been found also in the case $p\neq 2$, see \cite{Loiudice3}. Going back to critical PDEs, thanks to \cite{BoUg}, Loiudice in \cite{Loiudice1} obtained the sufficient asymptotic expansions of a family of functions naturally associated with the extremals, and this was enough to obtain a perturbation result à la Br\'{e}zis-Nirenberg in a general Carnot group $\mathbb{G}$. We briefly mention that the above mentioned papers are mainly interested in the existence, multiplicity or non-existence of positive solutions. Following this line of research, in \cite{Loiudice2, Loiudice4} Loiudice considered the case of singular perturbations of critical problems, where the term {\em singular} has to be considered in the sense of a suitable Hardy-type potential. Our interest in the present paper is to consider {\em mild singular perturbation} of the form $u^{-\gamma}$ with $\gamma \in (0,1)$ (hence the term {\em mild}). Since the seminal paper by Crandall, Rabinowitz and Tartar \cite{CRT}, the literature dealing with singular problems of this kind in the Euclidean setting, even for $\gamma \geq 1$, has seen a great amount of contributions: we refer to the recent survey \cite{OP3} and the references therein for a very detailed account. On the other hand, to the best of our knowledge, this seems to be the first contribution which considers singular power-type perturbations in the setting of Carnot groups.

Let us now briefly comment on the main result stated in Theorem \ref{thm:main}. The proof follows the argument performed in the Euclidean case by Haitao \cite{Haitao}, suitably adapted to the Carnot group setting, and it consists of several technical steps that we list here below:
\begin{itemize}
	\item we prove the existence of a first solution by means of a variational sub and supersolution scheme, adapting the approach of Struwe \cite{Struwe}. The subsolution is naturally provided by the unique solution of the purely singular problem, see Theorem \ref{thm:Singular_Problem}, while the supersolution is constructed in Lemma \ref{lem:2.3Haitao}.
	We notice that this scheme immediately provides a threshold $\Lambda$ (see \eqref{eq:DefinitionLambda}) for the non-existence; 
	\item we show that for $\lambda \in (0,\Lambda)$ the first solution obtained as described before is a local minimizer in the natural topology associated with problem \eqref{eq:Main_Problem}, see Lemma \ref{lem:25Haitao};
	\item we follow an argument originally due to Tarantello \cite{Tarantello} (see also \cite{BadTar}) to prove the existence of a second solution. Here we heavily employ the asymptotic expansions found in \cite{Loiudice1}, taking care of the new singular term, and we adapt the Euclidean estimates of \cite{BrezisNirenberg} exploiting the properties of the convolution in Carnot groups proved in \cite{FollandStein}.
\end{itemize}

\medskip

The paper is organized as follows: in Section \ref{sec:Prel} we recall the basic facts on Carnot groups needed in what follows, like the Folland-Stein spaces which provide the natural variational framework where problem \eqref{eq:Main_Problem} is set, strong maximum principle and weak Harnack inequality for the operator $-\Delta_{\mathbb{G}} + c$ which we were not able to find in the literature but is probably well known to experts in the field. We also provide the basic result for the purely singular problem whose importance has been already described. Section \ref{sec:First_Solution} is devoted to find the first solution while the second solution (for $\lambda \in (0,\Lambda)$) is found in Section \ref{sec:Second_Solution}.

\section{Preliminaries}\label{sec:Prel}
In this section we collect all the relevant notations, definitions and preliminaries needed in the rest of the paper.
\subsection{Carnot groups}
A Carnot group $\mathbb{G}=(\mathbb{R}^{N},\diamond)$ of step $k$ is a connected, simply connected Lie group whose finite dimensional Lie algebra $\mathfrak{g}$ of left-invariant (w.r.t. $\diamond$) vector fields admits a stratification of step $k$, namely there exist $k$ linear subspaces $\mathfrak{g}_1, \ldots, \mathfrak{g}_k$ such that
\begin{equation*}
	\mathfrak{g} = \mathfrak{g}_{1}\oplus \ldots \oplus \mathfrak{g}_{k}, \qquad [\mathfrak{g}_1,\mathfrak{g}_i]=\mathfrak{g}_{i+1}, \qquad \mathfrak{g}_{k}\neq \{0\}, \qquad \mathfrak{g}_i = \{0\} \textrm{ for all } i>k.
\end{equation*}
In particular, this implies that Carnot groups are a special instance of graded groups.\\
We call $\mathfrak{g}_1$ the horizontal layer. We denote by $X_1, \ldots, X_{N}$ a basis of left-invariant vector fields of $\mathfrak{g}$ such that the following holds:
\begin{itemize}
	\item $X_1, \ldots, X_{m_1}$ is a basis of $\mathfrak{g}_{1}$;
	\item for every $1<i\leq k$, $X_{m_{i-1}+1},\ldots, X_{m_{i}}$ is a basis of of $\mathfrak{g}_{i}$;
	\item $m_{0}=0$ and $n_i := m_{i}-m_{i-1} = \dim \mathfrak{g}_i$ for every $1\leq i\leq k$;
	\item $m_1 + \ldots + m_k = N$.
\end{itemize}
We notice that $N$ is the topological dimension of $\mathbb{G}$, but we can also define its {\em homogeneous dimension} $Q$ as follows
\begin{equation}\label{eq:Def_Q}
	Q:= \sum_{i=1}^{k}i \, n_{i}.
\end{equation}
We notice that $N \leq Q$ and that $N=Q$ if and only if $\mathbb{G}$ is the classical Euclidean group $(\mathbb{R}^{N},+)$. In particular, this is the only possible case whenever $Q \leq 3$.\\
Since the exponential map is a one-to-one diffeomorphism from $\mathfrak{g}$ to $\mathbb{G}$, any point $g\in \mathbb{G}$ can be uniquely written in exponential coordinates as
\begin{equation*}
	g = g_1 X_1 + \ldots + g_N X_N = (g_1, \ldots, g_N).
\end{equation*}
Being a graded group, every Carnot group $\mathbb{G}$ possess a family of anisotropic dilations $\delta_{\lambda}: \mathbb{G}\to \mathbb{G}$ defined as
\begin{equation}\label{eq:dilation}
	\delta_{\lambda}(g) = \left(\lambda^{\alpha_1}g_1, \ldots, \lambda^{\alpha_{N}}g_N \right), \quad \textrm{ for every } \lambda >0,
\end{equation}
\noindent where $\alpha_{j} = i$ if $m_{i-1}<j\leq m_{i}$.
We notice that $Q = \alpha_1 + \ldots + \alpha_{N}$.\\
An explicit expression of the group operation $\diamond$ can be then determined by means of the Campbell-Baker-Hausdorff formula, see e.g. \cite{BLU} for more details. The null element of $\diamond$ is the identity $0 = (0,\ldots,0)$ and the inverse of a certain $g \neq 0$ is usually denoted by $g^{-1}$.\\
The group operation $\diamond$ can also be used to define a further family of automorphisms of $\mathbb{G}$ known as {\em left translations} $\tau : \mathbb{G}\to \mathbb{G}$. More precisely, given a base point $h\in \mathbb{G}$, we define 
\begin{equation}\label{eq:left-translation}
	\tau_{h}(g):= h \diamond g.
\end{equation}

\medskip

It is possible to endow a Carnot group $\mathbb{G}$ with a richer structure. Firstly, it is possible to define a scalar product $\langle \cdot, \cdot \rangle_{\mathfrak{g}_{1}}$ such that the basis $\{X_1, \ldots, X_{m_1}\}$ of the horizontal layer becomes orthonormal, and this provides a sub-Riemannian structure over $\mathbb{G}$. We notice that one can also provide a purely Riemannian structure defining a scalar product on the Lie algebra $\mathfrak{g}$ making the entire basis $\{X_1,\ldots,X_N\}$ orthonormal.
The sub-Riemannian Carnot group $\mathbb{G}$ can also be endowed with an intrinsic metric structure by means of the so called {\em Carnot-Carath\'{e}odory} ($CC$ in short) distance. It is well known that with this distance, these spaces are not Riemannian at any scale, see e.g. \cite{Semmes}.

\medskip

From a more analytic point of view, we can define several differential operators modelled on the horizontal vector fields
$\{X_1,\ldots, X_{m_1}\}$. 
First, given a smooth horizontal vector field $V=v_1 X_1 + \ldots + v_{m_1}X_{m_1}$, we define its horizontal divergence as
\begin{equation}\label{eq:horizontal_divergence}
	\mathrm{div}_{\mathbb{G}}V := X_{1}v_1 + \ldots + X_{m_1}v_{m_1}.
\end{equation}
Moreover, given a smooth enough scalar-valued function $u:\mathbb{G}\to \mathbb{R}$, we can define
the horizontal gradient of $u$ as
\begin{equation}\label{eq:Horizontal_grad}
	\nabla_{\mathbb{G}}u := (X_{1}u, \ldots, X_{m_1}u),
\end{equation}
\noindent and the sub-Laplacian of $u$ as
\begin{equation}\label{eq:horizontal_lap}
	\Delta_{\mathbb{G}}u := \mathrm{div}_{\mathbb{G}}(\nabla_{\mathbb{G}}u) = X_1^2 u + \ldots + X_{m_1}^{2} u,
\end{equation}
We point out that both the divergence $\mathrm{div}_{\mathbb{G}}$ and the horizontal gradient $\nabla_{\mathbb{G}}$ (and \cor{a fortiori} the sub-Laplacian $\Delta_{\mathbb{G}}$)
are independent of the choice of the base on $\g_1$: in fact, they are intrisically
associated to the sub-Riemannian structure on $\mathbb{G}$.

We notice that both the horizontal gradient $\nabla_{\mathbb{G}}$ and the sub-Laplacian $\Delta_{\mathbb{G}}$ are left-invariant operators, i.e.
\begin{equation}
	\nabla_{\mathbb{G}}(u\circ \tau_{h}) = (\nabla_{\mathbb{G}}u)\circ \tau_{h} \quad \textrm{ and } \quad \Delta_{\mathbb{G}}(u \circ \tau_{h}) = (\Delta_{\mathbb{G}}u)\circ \tau_{h}, \quad \textrm{ for every } h \in \mathbb{G},
\end{equation}
\noindent and they are, respectively, homogeneous of degree one and two w.r.t. the family of dilations $\delta_{\lambda}$ defined in \eqref{eq:dilation}, namely
\begin{equation}
	\nabla_{\mathbb{G}}(u\circ \delta_{\lambda})= \lambda (\nabla_{\mathbb{G}}u)\circ \delta_{\lambda} \quad \textrm{ and } \quad \lambda^{2}(\Delta_{\mathbb{G}}u)\circ \delta_{\lambda}, \quad \textrm{ for every } \lambda >0.
\end{equation}
The Lebesgue measure $\mathcal{L}^{N}$ coincides with the Haar measure of $\mathbb{G}$ and hence is left-invariant and satisfies the following scaling property:
\begin{equation}
	\mathcal{L}^{N}(\delta_{\lambda}(E)) = \lambda^{Q} \mathcal{L}^{N}(E) \quad \textrm{ for every measurable set } E \subset \mathbb{G}.	
\end{equation}
Every integral in this manuscript has to be understood
with respect to the Haar measure, unless otherwise stated.

Moreover, the homogeneity of the $X_i$'s implies that the (formal)
adjoint of $X_i$ in the Lebesgue space $L^2(\mathbb{G})$ is precisely $-X_i$ (for $i\leq i\leq m_1$), that is,
\begin{equation} \label{eq:XiSA}
 \int_{\mathbb{G}}(X_i\varphi)\psi = -\int_{\mathbb{G}}\varphi(X_i\psi)\quad\text{for
 every $1\leq i\leq m_1$}.
\end{equation}
In particular, $-\Delta_\mathbb{G}$ is a \emph{self-adjoint operator}.
\medskip

Every Carnot group can be endowed with several homogeneous norms. A homogeneous (quasi)norm $\rho : \mathbb{G} \to \mathbb{R}$ is a non-negative function further satisfying the following properties:
\begin{itemize}
	\item $\rho(g)=0$ if and only if $g=0$;
	\item $\rho(\delta_{\lambda}(g)) = \lambda \, \rho(g)$ for every $g \in \mathbb{G}$ and for every $\lambda >0$;
	\item $\rho(h\diamond g) \leq C \left(\rho(h) + \rho(g)\right)$ for every $g,h \in \mathbb{G}$ and for some constant $C \geq 1$.
\end{itemize}
The importance of such objects is witnessed by a famous result of Folland: in \cite{Folland}, he showed that there exists a homogeneous norm $|\cdot|_{\mathbb{G}}$ on $\mathbb{G}$ and a positive constant $C_Q>0$, depending only on $Q$, such that the function
\begin{equation}\label{eq:def_Gamma}
	\Gamma_{h}(g):= \dfrac{C_Q}{|h^{-1}\diamond g|^{Q-2}_{\mathbb{G}}}, \quad \textrm{ with } Q \geq 3,
\end{equation}
\noindent is a fundamental solution of $-\Delta_{\mathbb{G}}$ with pole at $h \in \mathbb{G}$.
Moreover, homogeneous norms can be used to define distances, different from the $CC$-distance, as follows: 
\begin{equation*}
	d_{\rho}(g,h):= \rho(h^{-1}\diamond g).
\end{equation*}
In any case, all these norms (and the relative distances) are equivalent and they all induce on $\mathbb{G}$ the Euclidean topology. For our purposes, we prefer to work with the homogeneous norm $|\cdot|_{\mathbb{G}}$ (and the associated distance $d_{\mathbb{G}}$) which provides the fundamental solution defined in \eqref{eq:def_Gamma}. In particular, we will denote by 
\begin{equation*}
	B_{r}(g_0) := \{ g \in \mathbb{G}: d_{\mathbb{G}}(g,g_0) = |g_{0}^{-1}\diamond g|_{\mathbb{G}} <r\},
\end{equation*} 
\noindent the open ball of radius $r>0$ and center $g_{0}\in \mathbb{G}$.

\medskip

\subsection{Folland-Stein spaces and critical PDEs}\label{sec:fsandpdes}
Let $\mathcal{O} \subseteq \mathbb{G}$ be an open set.
For every $f \in C^{\infty}_{0}(\mathcal{O})$ there exists a positive constant $C_Q>0$ depending only on the homogeneous dimension $Q$ such that the following Sobolev inequality holds true
\begin{equation}\label{eq:Sobolev_Ineq}
	\|f\|^{2}_{L^{2_{Q}^{\star}}(\mathcal{O})} \leq C_{Q} \, \| |\nabla_{\mathbb{G}}f| \|^{2}_{L^{2}(\mathcal{O})},
\end{equation}
\noindent where 
\begin{equation}
	2_{Q}^{\star} := \dfrac{2Q}{Q-2},
\end{equation}
\noindent denotes the (sub-elliptic) critical Sobolev exponent, resembling the classical Euclidean 
one
$$2^{\star}=\frac{2N}{N-2}.$$
Thanks to \eqref{eq:Sobolev_Ineq}, 
$\| |\nabla_{\mathbb{G}}f| \|_{L^{2}(\Omega)}$ provides a norm on the space $C^{\infty}_{0}(\Omega)$.
We define the Folland-Stein space $S^{1}_{0}(\mathcal{O})$ as the completion of $C^{\infty}_{0}(\mathcal{O})$ w.r.t.\,the above norm, and we set
$$\|u\|_{S^{1}_{0}(\mathcal{O})} = \||\nabla_\mathbb{G}u|\|_{L^2(\mathcal{O})}
\quad\text{for every $u\in S^{1}_{0}(\mathcal{O})$}.$$
We explicitly observe that, owing to \eqref{eq:Sobolev_Ineq}, we have
\begin{equation} \label{eq:explicitS01def}
 S_0^1(\mathcal{O}) = \big\{u\in L^{2_{Q}^{\star}}(\mathcal{O}):\,\text{$X_iu\in L^2(\mathcal{O})$
 for all $1\leq i\leq m_1$}\big\},
\end{equation}
where $X_1u,\ldots,X_{m_1}u$ are meant in the sense of distributions, that is (see also \eqref{eq:XiSA}),
$$\int_{\mathcal{O}}(X_ju)\varphi = -\int_{\mathcal{O}}uX_j\varphi\quad\text{for all $\varphi\in C_0^\infty(\mathcal{O})$}.$$
In the particular case when $\mathcal{O}$ \emph{is bounded} (which the is case we are mainly
interested in, together with the case $\mathcal{O} = \mathbb{G}$),
the space $S_0^1(\mathcal{O})$ enjoys the following classical properties.
\begin{enumerate}
 \item $S_0^1(\mathcal{O})$ is endowed with a structure of real Hilbert space by the inner product
 $$\langle u,v\rangle_{S_0^{1}(\mathcal{O})} = \int_{\mathcal{O}}\langle \nabla_\mathbb{G}u,
 \nabla_\mathbb{G}v\rangle_{\mathfrak{g}_{1}}\qquad (u,v\in S_0^1(\mathcal{O})),$$
whose associated norm is precisely $\|\cdot\|_{S_0^1(\mathcal{O})}$.

 \item By density, the Sobolev inequality \eqref{eq:Sobolev_Ineq} actually holds for every function $u\in S_0^1(\mathcal{O})$. As a consequence, $S_0^1(\mathcal{O})$ is \emph{continuously embedded} into $L^p(\mathcal{O})$ for every $1\leq p\leq 2_Q^\star$.
 Furthermore, this embedding turns out to be \emph{compact} when $1\leq p < 2_Q^\star$. 
 \item If $u\in S_0^1(\mathcal{O})$, then $u_+ = \max\{u,0\},\,u_- = \max\{-u,0\}\in S_0^1(\mathcal{O})$, and
 $$\nabla_\mathbb{G}u_+ = \nabla_\mathbb{G}u\cdot\chi_{\{u > 0\}}
 \quad \textrm{ and } \quad  \nabla_\mathbb{G}u_- = \nabla_\mathbb{G}u\cdot\chi_{\{u < 0\}}.
 $$
 In particular, for every $c \in\mathbb{R}$, we derive
 \begin{equation}\label{eq:Grad_and_level_set}
 	\nabla_\mathbb{G}u = 0  \quad \textrm{a.e. on any level set } \{u = c\}.
 \end{equation}
 \end{enumerate}
We refer, e.g., to \cite{FSSC} for good approximation results in terms of smooth function, allowing to prove the above facts.
\begin{rmk} \label{rem:ConvergenceS01}
On account of the above properties of $S_0^1(\Omega)$, it is possible to prove
the following \emph{convergence result}, which will be repeatedly used in the sequel.

Assume that $\{u_k\}_k\subseteq S_0^1(\Omega)$ is a \emph{bounded sequence}. Since $S_0^1(\Omega)$ is a real \emph{Hilbert space}, and since we have already pointed out that the embedding 
  $$S_0^1(\Omega)\hookrightarrow L^p(\Omega)$$
  is compact for every $1\leq p< 2^\star_Q$, as in the classical case we deduce that there exists a function $u\in S_0^1(\Omega)$ such that (up to a subsequence, and as $k\to+\infty$)
  \vspace{0.1cm}
  
  \begin{itemize}
  	\item  $u_k\to u$ \emph{weakly in $S_0^1(\Omega)$};
  	\item $u_k\to u$ \emph{strongly} in $L^p(\Omega)$ for every $1\leq p< 2^\star_Q$;
  	\item $u_k\to u$ \emph{pointwise a.e.\,in $\Omega$}.
  \end{itemize}
  On the other hand, since the embedding $S_0^1(\Omega)\hookrightarrow L^{2^*Q}(\Omega)$ is \emph{continuous}, we have that $u\in L^{2^\star_Q}(\Omega)$, and  $\{u_k\}_k$ is \emph{bounded also in $L^{2^\star_Q}(\Omega)$}; in particular, setting 
  $$p_0 = \frac{2^\star_Q}{2^\star_Q-1}\in (1,2^\star_Q),$$
  we have that $\{v_k = u_k^{2^\star_Q-1}\}_k$ is bounded in $L^{p_0}(\Omega)$. As a consequence of this fact, and since we know that $u_k\to u$ po\-int\-wise a.e.\,in $\Omega$, we deduce that $\{v_k\}_k$ \emph{converges weakly to $u^{2^\star_Q-1}$}
  (as $k\to +\infty)$, and up to a subsequence) in $L^{p_0}(\Omega)$, that is, 
  $$\int_{\Omega}u_k^{2^\star_Q-1}\varphi\,dx \to \int_{\Omega}u^{2^\star_Q-1}\varphi\,dx\quad\text{for every $\varphi\in L^{p_0'}(\Omega)
  = L^{2^\star_Q}(\Omega)\supseteq S_0^1(\Omega)$}.$$
\end{rmk}

\medskip

In \cite{GaroVa} Garofalo and Vassilev adapted the celebrated {\em concentrantion-compactness principle} of Lions showing that the best Sobolev constant in \eqref{eq:Sobolev_Ineq} can be achieved and it is characterized as follows
\begin{equation}\label{eq:def_best_constant}
	S_{\mathbb{G}} := \inf_{f\in S^{1}_{0}(\mathbb{G})} \dfrac{\| |\nabla_{\mathbb{G}}f| \|^{2}_{L^{2}(\mathbb{G})}}{	\|f\|^{2}_{L^{2_{Q}^{\star}}(\mathbb{G})}}.
\end{equation}
As in the Euclidean case, this fact has immediate consequences at the level of the critical PDE
\begin{equation}\label{eq:Critical_PDE_in_G}
	-\Delta_{\mathbb{G}}u = u^{2^{\star}_{Q}-1} \quad \textrm{in } \mathbb{G}.
\end{equation}
We list here below the most important results concerning \eqref{eq:def_best_constant} and  \eqref{eq:Critical_PDE_in_G}.

\begin{itemize}
	\item Every minimizing sequence of \eqref{eq:def_best_constant} is relatively compact in $S^{1}_{0}(\mathbb{G})$, after possibly translating and dilating each of its elements. In particular, the minimum in \eqref{eq:def_best_constant} is achieved and \eqref{eq:Critical_PDE_in_G} admits a non-negative and non-trivial solution $U \in S^{1}_{0}(\mathbb{G})$, see \cite[Theorem 6.1]{GaroVa}.
	\item By Bony's maximum principle \cite{Bony}, every non-negative solution of \eqref{eq:Critical_PDE_in_G} is actually positive.
	\item If $T \in S^{1}_{0}(\mathbb{G})$ is a positive solution of \eqref{eq:Critical_PDE_in_G}, then there exists a positive constant $M_1>0$ such that 
	\begin{equation}\label{eq:Stima_Bonf_Ugu}
		T(g) \leq M_1 \, \min \{1, |g|_{\mathbb{G}}^{2-Q}\}, \quad \textrm{ for every } g \in \mathbb{G},
	\end{equation} 
	\noindent see \cite[Theorem 3.4]{BoUg}.
	\item If $T \in S^{1}_{0}(\mathbb{G})$ is a positive solution of \eqref{eq:Critical_PDE_in_G}, then there exists a positive constant $M_2 >0$ such that
	\begin{equation}\label{eq:Stima_Loiudice}
		T(g) \geq M_2 \, \dfrac{|B_{1}(0)|}{(1+|g|_{\mathbb{G}})^{Q-2}}, \quad \textrm{ for every } g \in \mathbb{G},
	\end{equation}
	\noindent see \cite[Lemma 3.2]{Loiudice1}.
\end{itemize}

 Let now $T \in S^{1}_{0}(\mathbb{G})$ be a minimizer of \eqref{eq:def_best_constant}. For every $\varepsilon >0$ define the rescaled function
\begin{equation}\label{eq:Minimi_riscalati}
	T_{\varepsilon}(g) := \varepsilon^{(2-Q)/2} T \left(\delta_{1/\varepsilon}(g)\right).
\end{equation}
Let further be $R>0$ such that $B_{R}(0)\subset \Omega$ and let $\varphi \in C^{\infty}_{0}(B_{R}(0))$ be a cut-off function such that $0\leq \varphi \leq 1$ and $\varphi \equiv 1$ in $B_{R/2}(0)$. Define the function 
\begin{equation}\label{eq:def_u_varepsilon}
	U_{\varepsilon}(g):= \varphi(g) T_{\varepsilon}(g), \quad g \in \mathbb{G}.
\end{equation}
The following holds:
\begin{itemize}
	\item Up to multiplicative constants, $T_{\varepsilon}$ is a solution of \eqref{eq:Critical_PDE_in_G}.
	\item Due to scaling invariance, we can set $T$ as to have
	\begin{equation}
		\| |\nabla_{\mathbb{G}}T_{\varepsilon}|\|^{2}_{L^{2}(\mathbb{G})} = \| T_{\varepsilon}\|^{2^{\star}_{Q}}_{L^{2_{Q}^{\star}}(\mathbb{G})} = S_{\mathbb{G}}^{Q/2}.
	\end{equation}
	\item The function $U_{\varepsilon}$ satisfies the following estimates as $\varepsilon \to 0^{+}$
	\begin{align}
		\| |\nabla_{\mathbb{G}}U_{\varepsilon}|\|^{2}_{L^{2}(\mathbb{G})} & =  S_{\mathbb{G}}^{Q/2} + O(\varepsilon^{Q-2}) \label{eq:Stima_grad_u_varepsilon}\\
		 \|U_{\varepsilon}\|^{2^{\star}_{Q}}_{L^{2_{Q}^{\star}}(\mathbb{G})}& = S_{\mathbb{G}}^{Q/2} + O(\varepsilon^{Q}) \label{eq:Stima_u_varepsilon}, 
	\end{align}
	\noindent see \cite[Lemma 3.3]{Loiudice1}.
\end{itemize}	

\subsection{Strong Maximum Principle and Harnack inequality} Now we have reviewed
the basic concepts concerning the Carnot groups setting, and before starting
our study of problem \eqref{eq:Main_Problem}$_\lambda$, for a future reference 
we explicitly
state here below a \emph{Weak Harnack inequality} for 
$$\text{$L = -\Delta_\mathbb{G}+c(x)$ (with $c\leq 0$)},$$
from which we will also derive a \emph{Strong Maximum Principle.}
These results will be fundamental throughout the rest of the paper.

\begin{prop}[Weak Harnack inequality for $-\Delta_\mathbb{G}+c$] \label{prop:WHarnack}
 Let $\mathcal{O}\subseteq\R^n$ be a bounded open set, and let 
 $c\in L^\infty(\mathcal{O}),\,c\geq 0.$
 Moreover, let $u\in S_0^1(\mathcal{O})$ be a \emph{weak supersolution}
 of the equation 
 \begin{equation} \label{eq:PDEOrderzeroHarnack}
  \text{$-\Delta_\mathbb{G}u+cu = 0$ in $\mathcal{O}$},
  \end{equation}
 that is
 \emph{(}see also the subsequent Definition \ref{def:weak_sub_super_sol}\emph{)},
 $$\int_\mathcal{O}\langle\nabla_\mathbb{G}u,\nabla_\mathbb{G}\varphi\rangle_{\mathfrak{g}_{1}}
 +\int_\mathcal{O} cu\varphi\geq 0\quad\text{
 for every $\varphi\in C_0^\infty(\mathcal{O}),\,\varphi\geq 0$}.$$
 If, in addiction, $u\geq 0$ a.e.\,in $\mathcal{O}$, there exist
 constants $c_0 > 0$ and $p_0\in (0,1)$,
 \emph{independent of~$u$}, such that,
 for every $g_0\in\mathcal{O},\,r > 0$ with $B_{4r}(g_0)\Subset\mathcal{O}$, we have
 \begin{equation} \label{eq:HarnackInequ}
  \Big(-\!\!\!\!\!\!\!\int_{B_{3r}(g_0)}|u|^{p_0}\Big)^{1/p_0}\leq c_0\,\inf_{B_{r}}u.
 \end{equation}
\end{prop}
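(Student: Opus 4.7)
The statement is a Weak Harnack inequality for a linear subelliptic operator with bounded zero-order term, and my plan is to carry out a classical De Giorgi--Nash--Moser iteration adapted to the Carnot-group setting. All the required analytic tools are available on CC-metric balls in $\mathbb{G}$: the Sobolev embedding \eqref{eq:Sobolev_Ineq} (in localized form on balls), Jerison's horizontal Poincaré inequality on metric balls, and the $Q$-doubling property of the Haar measure recalled in Section~\ref{sec:Prel}. Since the assumption $B_{4r}(g_{0})\Subset\mathcal{O}$ leaves ample room for cutoffs, the argument is purely local and reduces to assembling these tools as in the Euclidean case (see, e.g., \cite{BLU}).

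\emph{Step 1 (regularization and Caccioppoli).} After replacing $u$ with $u+\varepsilon$ and removing $\varepsilon>0$ by monotone convergence at the end, I may assume $u\geq\varepsilon>0$. Plugging test functions of the form $\varphi=\eta^{2}(u+\varepsilon)^{\beta}$, with $\beta\in\R\setminus\{-1,0\}$ and $\eta\in C_{0}^{\infty}(B_{4r}(g_{0}))$ a standard cutoff, into the supersolution inequality and applying Young's inequality, I obtain a Caccioppoli estimate for $\nabla_{\mathbb{G}}(u^{(\beta+1)/2})$. The zero-order term contributes only $\|c\|_{L^{\infty}}\int\eta^{2}u^{\beta+1}$, which is absorbed into the iteration constants.

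\emph{Step 2 (iteration on negative powers).} For $\beta<-1$, setting $q=-(\beta+1)>0$ and combining the Caccioppoli estimate with \eqref{eq:Sobolev_Ineq} applied to $\eta\,u^{(\beta+1)/2}$ yields, with $\chi:=Q/(Q-2)>1$,
\begin{equation*}
\Big(-\!\!\!\!\!\!\!\int_{B_{\rho'}(g_{0})}u^{-\chi q}\Big)^{1/(\chi q)}\,\leq\,\frac{C}{(\rho-\rho')^{2}}\,\Big(-\!\!\!\!\!\!\!\int_{B_{\rho}(g_{0})}u^{-q}\Big)^{1/q}\qquad (r\leq\rho'<\rho\leq 3r).
\end{equation*}
Iterating geometrically over radii decreasing from $3r$ to $r$ and exponents $q,\chi q,\chi^{2}q,\dots\to\infty$ then gives, for some fixed $q_{1}>0$ and $C>0$,
\begin{equation*}
\Big(-\!\!\!\!\!\!\!\int_{B_{3r}(g_{0})}u^{-q_{1}}\Big)^{-1/q_{1}}\,\leq\, C\,\inf_{B_{r}(g_{0})}u.
\end{equation*}

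\emph{Step 3 (bridging via $\mathrm{BMO}$).} Testing the supersolution inequality with $\varphi=\eta^{2}/(u+\varepsilon)$ and invoking Jerison's Poincaré inequality produces the logarithmic Caccioppoli bound
\begin{equation*}
\int_{B_{3r}(g_{0})}|\nabla_{\mathbb{G}}\log u|^{2}\,\leq\, C\,r^{Q-2},
\end{equation*}
which places $\log u$ in $\mathrm{BMO}(B_{3r}(g_{0}))$ with a universal norm. The John--Nirenberg inequality on doubling metric-measure spaces supporting a Poincaré inequality (applicable here thanks to the properties recalled in Section~\ref{sec:Prel}) then furnishes $p_{0}\in(0,1)$ and $C>0$ with
\begin{equation*}
\Big(-\!\!\!\!\!\!\!\int_{B_{3r}(g_{0})}u^{p_{0}}\Big)\Big(-\!\!\!\!\!\!\!\int_{B_{3r}(g_{0})}u^{-p_{0}}\Big)\,\leq\, C,
\end{equation*}
and combining this with Step 2 applied with $q_{1}=p_{0}$ yields \eqref{eq:HarnackInequ}.

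The argument is conceptually routine; the principal technical obstacle is to verify that the Caccioppoli constants remain uniformly bounded as $\beta$ varies over the required range, and that the $\varepsilon$-regularization can be safely removed at the end. Both checks are standard but require careful bookkeeping; alternatively, one can bypass the log/$\mathrm{BMO}$ step by invoking a version of the Bombieri--Giusti lemma, which directly converts the negative-power estimate of Step 2 into the desired positive-power inequality.
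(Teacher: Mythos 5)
Your sketch is mathematically sound and is, up to presentation, precisely the strategy behind the paper's statement: a Moser iteration built from Caccioppoli estimates with power test functions, iteration on negative exponents, and a BMO/John--Nirenberg crossing step (or, as you note at the end, a Bombieri--Giusti lemma in its place). The paper, however, does not carry out this iteration itself; it observes that the entire argument is already performed in Guti\'errez--Lanconelli \cite{GutLan}, Theorem~4.1, for the whole class of $X$-elliptic operators, of which $-\Delta_\mathbb{G}+c$ with $0\leq c\in L^\infty$ is a special case, and then extracts the supersolution-only half of that proof — the negative-power $\inf$-bound and the crossing lemma, corresponding to your Steps~2 and~3 — since these two steps are logically independent of the subsolution part and already yield \eqref{eq:HarnackInequ}. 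So the two proofs agree in substance; the paper simply cites the existing and more general result rather than reconstructing it. If you do write out the iteration in full, be aware that Section~\ref{sec:Prel} does not in fact record the two extra ingredients you appeal to — Jerison's horizontal Poincar\'e inequality on balls and the doubling property of the Haar measure for $d_\mathbb{G}$-balls — so both would need explicit citations, and the routine technical checks you flag (uniformity of the Caccioppoli constants in $\beta$, removal of the $\varepsilon$-regularization, and density of smooth test functions against $\eta^2(u+\varepsilon)^\beta$) would also have to be spelled out.
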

\begin{proof}
 The proof of this proposition is contained in the proof of \cite[Theorem 4.1]{GutLan},
 where the Authors establish a \emph{full Harnack inequality}
 for the \emph{non-negative weak solutions $u$} of 
 \begin{equation} \label{eq:PDEGutLan} 
  \text{$Lu = 0$ in $\Omega$},
 \end{equation}
 and $L$ is a general $X$-elliptic operator (of which $-\Delta_\mathbb{G}+c(\cdot)$ is a particular
 case).
 Indeed,
 to prove the cited Harnack inequality for a given non-negative weak solution $u$
 of \eqref{eq:PDEGutLan}, the Authors in \cite{GutLan} use
 an adaptation of the Moser iteration technique, and they show that
 \begin{itemize}
  \item[i)] since $u$ is, in particular, a \emph{weak subsolution} of \eqref{eq:PDEGutLan}, then
  $$\sup_{B_r(g_0)}u \leq C_1\Big(-\!\!\!\!\!\!\!\int_{B_{2r}(g_0)}|u|^{s}\Big)^{1/s},$$
  for some constants $C_1 > 0,\,s > 0$ independent of $u$;
  \vspace*{0.05cm}
  
  \item[ii)] since $u$ is, in particular, a \emph{weak supersolution} of \eqref{eq:PDEGutLan}, then
  $$\inf_{B_r(g_0)}u \geq C_2\Big(-\!\!\!\!\!\!\!\int_{B_{3r}(g_0)}|u|^{-p_0}\Big)^{-1/p_0},$$
  for some constant $C_2 > 0$ and every $p_0$ small enough;
  \vspace*{0.05cm}
  
  \item[iii)] there exists a constant $C_3 > 0$, only depending on $p_0$, such that
  $$\Big(-\!\!\!\!\!\!\!\int_{B_{3r}(g_0)}|u|^{p_0}\Big)^{1/p_0}
  \leq C_3\Big(-\!\!\!\!\!\!\!\int_{B_{3r}(g_0)}|u|^{-p_0}\Big)^{-1/p_0},$$
 \end{itemize}
 provided that $B_{4r}(g_0)\Subset\mathcal{O}$. Thus, since the proofs of i)\,-\,iii) are mutually independent,
 if $u$ is just a weak supersolution of \eqref{eq:PDEOrderzeroHarnack} (which is a particular
 case of \eqref{eq:PDEGutLan}), the demonstration of assertion ii)
 in \cite{GutLan} gives exactly the desired estimate \eqref{eq:HarnackInequ}.
\end{proof}
From Proposition \ref{prop:WHarnack}, and using a classical covering argument, we obtain the following
\begin{coro} \label{cor:BrezisNirenbergpernoi}
 Let $\mathcal{O}\subseteq\mathbb{G}$ be a bounded open set,
 and let $c\in L^\infty(\mathcal{O}),\,\text{$c\geq 0$}$. Furthermore, let
 $u\in S_0^1(\mathcal{O})$ be a weak superso\-lu\-tion of 
equation \eqref{eq:PDEOrderzeroHarnack} satisfying
$$\text{$u > 0$ a.e.\,on every open ball $B = B_r(g_0)\Subset\Omega$}.$$
Then, for every open set $\mathcal{O}'\Subset\mathcal{O}$ there exists $C = C(\Oo',u) > 0$ such that
$$\text{$u\geq C(\Oo',u) > 0$ a.e.\,in $\Oo'$}.$$
\end{coro}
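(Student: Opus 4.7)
The plan is a standard compactness\,/\,covering argument built on top of the weak Harnack inequality in Proposition \ref{prop:WHarnack}; the only work is to verify that, given the qualitative positivity assumption on $u$, the right-hand side of \eqref{eq:HarnackInequ} is strictly positive on every sufficiently small ball contained in $\mathcal{O}$, and then to glue these local bounds via compactness of $\overline{\mathcal{O}'}$.

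First I would fix $\mathcal{O}'\Subset\mathcal{O}$ and set
$$\delta:=\mathrm{dist}\big(\overline{\mathcal{O}'},\partial\mathcal{O}\big)>0,\qquad r:=\delta/8,$$
so that, for every $g\in\overline{\mathcal{O}'}$, one has $B_{4r}(g)\Subset\mathcal{O}$. Since the hypothesis $u>0$ a.e.\,on every ball $B\Subset\mathcal{O}$ clearly forces $u\geq 0$ a.e.\,on $\mathcal{O}$, the function $u$ satisfies all the assumptions of Proposition \ref{prop:WHarnack} on each such ball. Applying the weak Harnack inequality at the point $g$ therefore yields
$$0\leq \Big(-\!\!\!\!\!\!\!\int_{B_{3r}(g)}u^{p_0}\Big)^{1/p_0}\leq c_0\,\mathop{\mathrm{ess\,inf}}_{B_r(g)}u,$$
with constants $c_0>0$ and $p_0\in(0,1)$ independent of $u$ and of $g$.

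The key observation is now that the left-hand side is \emph{strictly positive}: indeed, $B_{3r}(g)\Subset\mathcal{O}$ is an open ball, so by hypothesis $u>0$ a.e.\,on it, and consequently
$$m(g):=\frac{1}{c_0}\Big(-\!\!\!\!\!\!\!\int_{B_{3r}(g)}u^{p_0}\Big)^{1/p_0}>0.$$
Thus $u\geq m(g)$ a.e.\,on $B_r(g)$.

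Finally, since $\overline{\mathcal{O}'}$ is compact (every homogeneous norm on $\mathbb{G}$ induces the Euclidean topology) and $\{B_r(g)\}_{g\in\overline{\mathcal{O}'}}$ is an open cover, I extract a finite subcover $B_r(g_1),\dots,B_r(g_k)$ and set
$$C(\mathcal{O}',u):=\min\{m(g_1),\dots,m(g_k)\}>0.$$
Then $u\geq C(\mathcal{O}',u)$ a.e.\,on $\bigcup_{i=1}^k B_r(g_i)\supseteq\mathcal{O}'$, which is exactly the claim. The only mildly delicate point, and the single place where the positivity hypothesis is genuinely used, is the strict inequality $m(g)>0$; everything else is essentially a bookkeeping exercise combining Proposition \ref{prop:WHarnack} with compactness.
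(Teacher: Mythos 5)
Your proof is correct and is exactly the "classical covering argument" the paper invokes without spelling out: the weak Harnack inequality of Proposition \ref{prop:WHarnack}, applied on a fixed-radius ball centered at each point of $\overline{\mathcal{O}'}$, gives a strictly positive local essential lower bound (positivity of the mean-integral on $B_{3r}(g)$ following from the hypothesis $u>0$ a.e.\,on every ball compactly contained in $\mathcal{O}$), and compactness of $\overline{\mathcal{O}'}$ then yields a uniform constant. One cosmetic point worth keeping in mind: since the homogeneous norm $|\cdot|_\mathbb{G}$ satisfies only a quasi-triangle inequality with some constant $C\geq 1$, the specific choice $r=\delta/8$ need not guarantee $B_{4r}(g)\Subset\mathcal{O}$; replacing it by $r=\delta/(8C)$ (or simply ``$r>0$ small enough, uniformly in $g\in\overline{\mathcal{O}'}$'') removes the ambiguity without altering the argument.
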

As anticipated, Proposition \ref{prop:WHarnack} implies the following \emph{Strong Maximum Principle}.
\begin{prop}[Strong Maximum Principle for $L$] \label{prop:SMP}
 Let $\mathcal{O}\subseteq \mathbb{G}$ be a \emph{bounded and connected} 
 open set, and let
 $u\in S^1_0(\mathcal{O}),\,u\geq 0$ be a \emph{weak supersolution} of \eqref{eq:PDEOrderzeroHarnack}. 
 Then,
 \begin{equation} \label{eq:SMPConclusion}
  \text{either $u \equiv 0$ or $u > 0$ a.e.\,in $\mathcal{O}$}.
 \end{equation}
\end{prop}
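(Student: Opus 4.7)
The plan is to exploit the dichotomy provided by the Weak Harnack inequality of Proposition \ref{prop:WHarnack} via a connectedness argument. Assume $u\not\equiv 0$ in $\mathcal{O}$ and consider the disjoint decomposition $\mathcal{O} = \mathcal{O}_1\sqcup \mathcal{O}_2$, where
\[
\mathcal{O}_1 := \{g\in\mathcal{O}\,:\,\text{$u = 0$ a.e.\ on some open neighborhood of $g$}\}
\]
and $\mathcal{O}_2 := \mathcal{O}\setminus\mathcal{O}_1$. Since $u\geq 0$ a.e., a point $g$ lies in $\mathcal{O}_2$ if and only if for \emph{every} open neighborhood $U$ of $g$ the set $\{u > 0\}\cap U$ has positive Haar measure. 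The set $\mathcal{O}_1$ is open by construction, and the goal is to show that $\mathcal{O}_2$ is open as well; once this is done, connectedness of $\mathcal{O}$ forces either $\mathcal{O}_2 = \emptyset$ (in which case $\mathcal{O}$ is covered by balls on which $u$ vanishes a.e., so $u\equiv 0$ a.e.\ in $\mathcal{O}$) or $\mathcal{O}_1 = \emptyset$ (in which case, using the openness of $\mathcal{O}_2$ established below, $u > 0$ a.e.\ in $\mathcal{O}$).

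To prove that $\mathcal{O}_2$ is open, fix $g_0\in\mathcal{O}_2$ and choose $r > 0$ so small that $B_{4r}(g_0)\Subset\mathcal{O}$. Since $g_0\in\mathcal{O}_2$, the ball $B_{3r}(g_0)$ is a neighborhood of $g_0$ on which $\{u > 0\}$ has positive Haar measure, and therefore
\[
\Bigl(-\!\!\!\!\!\!\!\int_{B_{3r}(g_0)}|u|^{p_0}\Bigr)^{1/p_0} > 0.
\]
Applying the Weak Harnack inequality \eqref{eq:HarnackInequ} (which is available since $u\in S_0^1(\mathcal{O})$ is a non-negative weak supersolution of \eqref{eq:PDEOrderzeroHarnack}), we deduce
\[
\inf_{B_r(g_0)} u \geq \frac{1}{c_0}\Bigl(-\!\!\!\!\!\!\!\int_{B_{3r}(g_0)}|u|^{p_0}\Bigr)^{1/p_0} > 0,
\]
so in particular $u > 0$ a.e.\ on $B_r(g_0)$. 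Hence every point of $B_r(g_0)$ belongs to $\mathcal{O}_2$, proving that $\mathcal{O}_2$ is open.

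Combining the two openness statements with the connectedness of $\mathcal{O}$ yields the desired alternative \eqref{eq:SMPConclusion}. The only subtle point, and the one I would check most carefully, is the interpretation of the pointwise inequality $\inf_{B_r(g_0)}u > 0$ as the essential infimum (which is the natural meaning for a Sobolev function) and its translation into the a.e.\ positivity of $u$ on $B_r(g_0)$; however, this is precisely how \eqref{eq:HarnackInequ} is formulated in Proposition \ref{prop:WHarnack}, so no further work is required. Everything else is a straightforward topological argument, with all the analytic difficulty packed into the already established Weak Harnack inequality.
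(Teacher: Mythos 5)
Your proof is correct and follows essentially the same route as the paper's: both arguments reduce the Strong Maximum Principle to the Weak Harnack inequality of Proposition~\ref{prop:WHarnack} combined with a connectedness argument. You formulate the clopen dichotomy as a partition of $\mathcal{O}$ into the two open sets $\mathcal{O}_1$ and $\mathcal{O}_2$, whereas the paper shows that the set $S = \mathcal{O}_1$ is nonempty, open, and closed under the hypothesis that $u$ vanishes on a set of positive measure; your step ``$\mathcal{O}_2$ is open'' is exactly the contrapositive of the paper's step ``$S$ is closed'', both being a single application of \eqref{eq:HarnackInequ}, so the two arguments are logically interchangeable.
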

\begin{proof} 
 We assume that there exists a set $\mathcal{Z}\subseteq\mathcal{O}$ of \emph{positive Lebesgue measure} such that $u \equiv 0$ (pointwise) on $\mathcal{Z}$, and we prove that in this case we have $u\equiv 0$ a.e.\,on $
\mathcal{O}$.
\vspace{0.1cm}

To this end we first observe that, since $|\mathcal{Z}| > 0$, we can find $r_0 > 0$ such that
$$\mathcal{Z}_0 = \mathcal{Z}\cap \{g\in\mathcal{O}:\,d_\mathbb{G}(g,\partial\mathcal{O}) \geq r_0\}$$
 has \emph{positive Lebesgue measure}; moreover, since $K = \{g\in\mathcal{O}:\,d_\mathbb{G}(g,\partial\mathcal{O}) \geq r_0\}\subseteq\Omega$
 is  com\-pact (recall that $d_\mathbb{G}$ induces 
 the Euclidean topology), there exist $g_1,\ldots,g_p\in K$ and $r_1,\ldots,r_p > 0$ (for some $p\in\mathbb{N}$) such that $B_{4r_i}(g_i)\Subset\mathcal{O}$ (for $1\leq i\leq p$) and
 $$\mathcal{Z}_0 = \bigcup_{i = 1}^p (\mathcal{Z}_0\cap B_{r_i}(g_i)).$$
 As a consequence, there exists a $d_\mathbb{G}$-ball $B_0 = B_{r_{i_0}}(g_{i_0})$
 (for some $1\leq i_0\leq p$) such that
 \begin{equation} \label{eq:Z0capBpositiveMeas}
  |\mathcal{Z}_0\cap B_{r_{i_0}(g_{i_0})}| > 0.
 \end{equation}	
 In particular, since $u\geq 0$ a.e.\,in $\mathcal{O}$ and $u = 0$ on $\mathcal{Z}_0$, we have
 \begin{equation} \label{eq:infuzeroZ0}
 	\inf_{B_{r_{i_0}}(g_{i_0})} u = \inf_{\Omega}u = 0.
 \end{equation}
 With \eqref{eq:Z0capBpositiveMeas}-\eqref{eq:infuzeroZ0}
at hand, we then define the set
 $$S = \{g\in\mathcal{O}:\,\text{there exists a ball 
 $B = B_r(g)\subseteq\mathcal{O}$ 
 s.t.\,$u = 0$ a.e.\,on $B$}\},$$
 and we prove that $S = \mathcal{O}$ by a \emph{connection argument}:
  \vspace{0.1cm}
 
 -\,\,\emph{$S\neq\emptyset$}. Let $g_{i_0}\in\mathcal{O}$ be as in \eqref{eq:Z0capBpositiveMeas}-\eqref{eq:infuzeroZ0}. Since $u$ is a non-negative weak supersolution of equation \eqref{eq:PDEOrderzeroHarnack}, and since
 $B = B_{4r_{i_0}(g_{i_0})}\Subset\mathcal{O}$, we can apply the Weak Harnack Inequality in Proposition \ref{prop:WHarnack}: this gives the following estimate
 $$\Big(-\!\!\!\!\!\!\!\int_{B_{3r_{i_0}}(g_{i_0})}|u|^{p_0}\Big)^{1/p_0}\leq c\,\inf_{B_{r_{i_0}}(g_{i_0})}u = 0,
$$
from which we derive that $u = 0$ a.e.\,in $B_{3r_{i_0}}$. Hence, $g_{i_0}\in S$.
 \medskip

 -\,\,\emph{$S$ is open}. Assume that $g\in S$, and let $B = B_r(g)\subseteq \mathcal{O}$ be a 
 $d_\mathbb{G}$-ball such that  
 $$\text{$u = 0$ a.e.\,in $B$}.$$ 
 Given any $g'\in B$, if we choose $\rho > 0$ in such a way that $B_\rho(g')\subseteq B$, we obviously have
  $$\text{$u = 0$ a.e.\,on $B_{\rho}(g')\subseteq B$},$$
 and this proves that $g'\in S$.
 By the arbitrariness of $g'\in B_r(g)$, we  conclude that $B_r(g)\subseteq S$, and hence $S$ is open, as desired.

 \medskip

-\,\,\emph{$S$ is closed}. Assume that $(g_k)_k$ is a sequence of points in $S$ which converges (as $k\to+\infty$) to some $g\in\mathcal{O}$, and let $\rho > 0$
be such that $B_{4\rho}(g)\Subset\mathcal{O}$. Since $g_k\to g$ as $k\to+\infty$, there
exists some $k_0\in\mathbb{N}$ such that $g_{k_0}\in B_\rho(g)$; on the other hand, since
$g_{k_0}\in S$, it is possible to find a suitable $d_\mathbb{G}$-ball $B = B_r(g_{k_0})$
such that $B \subseteq B_\rho(g)$ and
$$\text{$u = 0$ a.e.\,in $B$}.$$
Hence, using once again the 
Weak
 Harnack Inequality in Proposition \ref{prop:WHarnack}, we get
$$\Big(-\!\!\!\!\!\!\!\int_{B_{3\rho}(g)}|u|^{p_0}\Big)^{1/p_0}\leq c\,\inf_{B_{\rho}(g)}u = 0,
$$
and thus $u = 0$ a.e.\,in $B_{3\rho}(g)$, that is, $g\in S$. Hence, $S$ is closed.
\medskip

Gathering the above facts, and recalling that $\mathcal{O}$ is connected, we then conclude that $\mathcal{O} = S$, and this obviously implies that
$u = 0$ a.e.\,in $\mathcal{O}$, as desired.
\end{proof}
\subsection{Weak sub/supersolutions of \texorpdfstring{$\eqref{eq:Main_Problem}_{\lambda}$}{Main Problem}}
We are now ready to properly set the definition of weak sub/supersolution of \eqref{eq:Main_Problem}$_\lambda$.
 However, we prefer to provide such definitions for the following slightly more general singular Dirichlet problem:
\begin{equation}\label{eq:General_Singular_Problem}
	\left\{\begin{array}{rl}
		-\Delta_{\mathbb{G}}u = \dfrac{\lambda}{u^{\gamma}} + f(x,u)& \textrm{ in } \Omega,\\
		u>0 & \textrm{ in } \Omega,\\
		u=0 & \textrm{ on } \partial \Omega,
	\end{array}\right.
\end{equation}
\noindent where $\gamma \in (0,1)$, $\lambda >0$ and $f: \Omega \times (0,+\infty)\to \mathbb{R}$ is a Carath\'{e}odory function satisfying the following critical growth condition: there exists a positive constant $K_{f}>0$ such that
\begin{equation}\label{eq:Growth_of_f}
	|f(x,t)|\leq K_{f} (1+t^{2^{\star}_{Q}-1})  \quad \textrm{ for a.e. } x \in \Omega \textrm{ and for  every } t>0.
\end{equation}
Here and throughout, $\Omega\subseteq\mathbb{G}$ is a fixed \emph{bounded and
connected open set} (as in \eqref{eq:Main_Problem}).
\begin{defin}\label{def:weak_sub_super_sol}
	Let $f: \Omega \times (0,+\infty)\to \mathbb{R}$ be a Carath\'{e}odory function satisfying the growth condition \eqref{eq:Growth_of_f}. We say that a function $u \in S^{1}_{0}(\Omega)$ is a weak subsolution (resp. supersolution) of \eqref{eq:General_Singular_Problem} if it satisfies the following properties:
	\begin{itemize}
		\item[(i)] $u>0$ in $\Omega$ and $u^{-\gamma} \in L^{1}_{\textrm{loc}}(\Omega)$.
		\item[(ii)] For every $0\leq \varphi \in C^{\infty}_{0}(\Omega)$, it holds that
		\begin{equation}\label{eq:weak_sub_super_sol}
			\int_{\Omega}\langle \nabla_{\mathbb{G}}u, \nabla_{\mathbb{G}}\varphi \rangle_{\mathfrak{g}_{1}} \leq (\textrm{resp. } \geq) \lambda \int_{\Omega}u^{-\gamma}\varphi + \int_{\Omega}f(x,u)\varphi.
		\end{equation}
	\end{itemize}
	Finally, we say that $u \in S^{1}_{0}(\Omega)$ is a weak solution of \eqref{eq:General_Singular_Problem} if it is both a weak subsolution and a weak supersolution of \eqref{eq:General_Singular_Problem} without the non-negativity condition on $\varphi$.
\end{defin}

 \begin{rmk} \label{rem:defweaksolPb}
	We now list here below some comments concerning the above Definition \ref{def:weak_sub_super_sol}. In what follows,
	we tacitly understand that $f:\Omega\to(0,+\infty)$ is a Carath\'eodory function satisfying
	the growth
	assumption \eqref{eq:Growth_of_f}.
	\medskip
	
	1)\,\,If $u\in S^{1}_{0}(\Omega)$ is a weak sub/supersolution of problem
	\eqref{eq:General_Singular_Problem}, all the integrals appearing in \eqref{eq:weak_sub_super_sol} 
	\emph{exist and are finite}.
	Indeed, by H\"older's inequality, for every 
	$\varphi\in C_0^\infty(\Omega),\,\text{$\varphi\geq 0$ in $\Omega$}$, 
	we obtain 
	\begin{equation} \label{eq:BrhouphiFinite}
				\left|\int_{\Omega}\langle \nabla_{\mathbb{G}}u, \nabla_{\mathbb{G}}\varphi \rangle_{\mathfrak{g}_{1}}\right|  \leq 
			\| u\|_{S_0^1(\Omega)}\|\varphi\|_{S_0^1(\Omega)}
	\end{equation}
	Moreover, using \eqref{eq:Growth_of_f} and the Sobolev inequality, we also get
	\begin{equation} \label{eq:termfFinite}
		\begin{split}
			\int_\Omega f(x,u)\,|\varphi|& \leq K_f\Big(\|\varphi\|_{L^1(\Omega)}
			+ \int_{\Omega}|u|^{2^{\star}_{Q}-1}|\varphi|\,dx\Big) \\
			& (\text{using H\"older's inequality}) \\
			& \leq  K_f\big(\|\varphi\|_{L^1(\Omega)}
			+\|u\|_{L^{2^{\star}_{Q}}(\Omega)}^{2^{\star}_{Q}-1}\cdot\|\varphi\|_{L^{2^{\star}_{Q}}(\Omega)}\big) \\
			& \leq K_f\big(\|\varphi\|_{L^1(\Omega)}
			+S_{\mathbb{G}}^{-2^{\star}_{Q}/2}\| u\|_{S_0^1(\Omega)}^{2^{\star}_{Q}-1}\cdot\| \varphi\|_{S_0^1(\Omega)}\big) \\
			& (\text{again by H\"older's inequality and Poincaré inequality}) \\
			& \leq C\| \varphi\|_{S_0^1(\Omega)}\big(1+\| u\|_{S_0^1(\Omega)}^{2^{\star}_{Q}-1}\big)<+\infty,
		\end{split}
	\end{equation}
	where $S_{\mathbb{G}} > 0$ is the best Sobolev constant in $\mathbb{G}$, and $C > 0$ depends on $Q,\,f,\,|\Omega|$.
	Fi\-nally, since \emph{we are assuming that $u^{-\gamma}\in L^1_{\mathrm{loc}}(\Omega)$}, we obviously
	have
	$$0\leq \int_\Omega u^{-\gamma}\varphi  \leq 
	\|\varphi\|_{L^\infty(\Omega)}\int_{\mathrm{supp}(\varphi)}u^{-\gamma}< +\infty. 
	$$
	We explicitly stress that this last estimate 
	(which is related with the \emph{singular term} $u^{-\gamma}$) is the unique estimate involving
	the $L^\infty$-norm of the test function $\varphi$; on the contrary, 
	estimates \eqref{eq:BrhouphiFinite}-\eqref{eq:termfFinite} only involve
	the \emph{$S_0^1$-norm} of $\varphi$.
	\vspace*{0.1cm}
	
	2)\,\,If $u\in S^{1}_{0}(\Omega)$ is a weak \emph{solution} of
	problem
	\eqref{eq:General_Singular_Problem}, and if $\varphi\in C_0^\infty(\Omega)$ is a {non-negative}
	test function (that is, $\varphi\geq 0$ in $\Omega$), by \eqref{eq:BrhouphiFinite}-\eqref{eq:termfFinite}
	we have
	\begin{align*}
		0\leq \int_\Omega u^{-\gamma}\varphi  & = 
		\frac{1}{\lambda}\Big(\int_{\Omega}\langle \nabla_{\mathbb{G}}u, \nabla_{\mathbb{G}}\varphi \rangle_{\mathfrak{g}_{1}}
		+ \int_\Omega f(x,u)\varphi \Big)
		\\
		& \leq C\| \varphi\|_{S_0^1(\Omega)}\big(1+\| u\|_{S_0^1(\Omega)}^{2^{\star}_{Q}-1}\big);
	\end{align*}
	from this, by using a standard \emph{density argument}, we can easily prove the following facts:
	\begin{itemize}
		\item[a)] $u^{-\gamma}\varphi\in L^1(\Omega)$
		\emph{for every $\varphi\in S^{1}_{0}(\Omega)$};
		\item[b)] identity \eqref{eq:weak_sub_super_sol}
		actually holds \emph{for every $\varphi\in S^{1}_{0}(\Omega)$}.
	\end{itemize}

	3)\,\,Let $u\in S^{1}_{0}(\Omega)$ be a weak \emph{solution} of problem \eqref{eq:General_Singular_Problem}.
	Since, in particular, we know that
	$u > 0$ a.e.\,in $\Omega$ (and $u\equiv 0$ a.e.\,in $\mathbb{G}\setminus\Omega$), it is quite easy
	to recognize that $u$ is a \emph{weak supersolution of the equation}
	$$-\Delta_{\mathbb{G}} u  = 0\quad\text{in $\Omega$},$$
	in the sense of Definition \ref{def:weak_sub_super_sol}.
	As a consequence, we are entitled to apply \cite{GutLan}, ensuring that
	\begin{equation*}
		\begin{gathered}
			\text{for every $\mathcal{O}\Subset\Omega$ there exists $c(\mathcal{O},u) > 0$ 
				s.t.\,$u\geq c(\mathcal{O},u) > 0$ a.e.\,in $\mathcal{O}$}.
		\end{gathered}
	\end{equation*}
\end{rmk}

\medskip

\subsection{Singular problem}\label{subsec:Singular}
The last preliminary result
we aim to present concerns the 
\emph{un\-per\-tur\-bed}, purely singular version of problem \eqref{eq:Main_Problem}$_\lambda$, that is,
\begin{equation}\label{eq:Singular_Problem}
	\left\{\begin{array}{rl}
				-\Delta_{\mathbb{G}}u = \dfrac{\lambda}{u^{\gamma}} & \textrm{ in } \Omega,\\
				u>0 & \textrm{ in } \Omega,\\
				u=0 & \textrm{ on } \partial \Omega,
	\end{array}\right.
\end{equation}
\noindent where $\gamma \in (0,1)$ and $\lambda >0$. We say that $u \in S^{1}_{0}(\Omega)$ is a weak solution of \eqref{eq:Singular_Problem} accordingly to Definition \ref{def:weak_sub_super_sol} with $f \equiv 0$.
In this context, we have the following theorem.

\begin{teo}\label{thm:Singular_Problem}
	Let $\Omega\subseteq\mathbb{G}$
	be a bounded open set. Moreover, let $\gamma \in (0,1)$ and $\lambda >0$. 
	
	Then, there exists a unique weak solution $w_{\lambda} \in S^{1}_{0}(\Omega) \cap L^{\infty}(\Omega)$ to \eqref{eq:Singular_Problem}. Moreover, $w_{\lambda}$ is a global minimizer in the $S^{1}_{0}(\Omega)$-topology
	of the functional 
		\begin{equation}\label{eq:def_Functional_Singular}
			J_{\lambda}(u):= \dfrac{1}{2}\int_{\Omega}|\nabla_{\mathbb{G}}u|^{2} 
			-\dfrac{\lambda}{1-\gamma} \int_{\Omega}|u|^{1-\gamma}.
		\end{equation}
		Finally, we also have that $J_{\lambda}(w_{\lambda})<0$.
   \end{teo}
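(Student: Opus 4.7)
The plan is to apply the direct method of the calculus of variations to the energy $J_\lambda$ defined in \eqref{eq:def_Functional_Singular}. First I would check that $J_\lambda$ is well-defined and coercive on $S_0^1(\Omega)$: since $1-\gamma<1<2^{\star}_{Q}$, H\"older's and the Sobolev inequality give $\int_\Omega|u|^{1-\gamma}\leq C\|u\|_{S_0^1(\Omega)}^{1-\gamma}$, and since $1-\gamma<2$ we deduce $J_\lambda(u)\to+\infty$ as $\|u\|_{S_0^1(\Omega)}\to\infty$. Weak sequential lower semicontinuity of the gradient term is immediate; for the singular term one uses the compact embedding $S_0^1(\Omega)\hookrightarrow L^1(\Omega)$ together with $|s|^{1-\gamma}\leq 1+|s|$ and Vitali's theorem. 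The direct method then produces a minimizer $w\in S_0^1(\Omega)$; since $J_\lambda(|w|)=J_\lambda(w)$ we may assume $w\geq 0$. Plugging $t\varphi$ with $0\not\equiv\varphi\in C_0^\infty(\Omega)$, $\varphi\geq 0$, and $t>0$ small, gives $J_\lambda(t\varphi)<0$ because $1-\gamma<2$, so $J_\lambda(w)<0$ and in particular $w\not\equiv 0$.

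The central technical step is to simultaneously establish the a.e.\,positivity of $w$ and a weak supersolution inequality. For $\varphi\in C_0^\infty(\Omega)$, $\varphi\geq 0$, the minimality of $w$ and the concavity of $s\mapsto s^{1-\gamma}$ on $[0,+\infty)$ (which makes $t\mapsto t^{-1}[(w+t\varphi)^{1-\gamma}-w^{1-\gamma}]$ monotonically non-increasing) allow one to pass to the limit via monotone convergence in $[J_\lambda(w+t\varphi)-J_\lambda(w)]/t\geq 0$ as $t\to 0^{+}$: the limit of the singular contribution equals $(1-\gamma)\int w^{-\gamma}\varphi$ on $\{w>0\}$ and $+\infty$ on $\{w=0\}\cap\{\varphi>0\}$. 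If the latter set had positive measure we would reach the contradiction $-\infty\geq 0$; varying $\varphi$ therefore forces $|\{w=0\}|=0$, i.e.\,$w>0$ a.e. The very same limit procedure also yields $w^{-\gamma}\in L^1_{\mathrm{loc}}(\Omega)$ and
\[
\int_\Omega\langle\nabla_{\mathbb{G}}w,\nabla_{\mathbb{G}}\varphi\rangle_{\mathfrak{g}_{1}}\geq \lambda\int_\Omega w^{-\gamma}\varphi\qquad\forall\,\varphi\in C_0^\infty(\Omega),\ \varphi\geq 0,
\]
so that $w$ is in particular a non-trivial weak supersolution of $-\Delta_\mathbb{G}u=0$. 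Corollary~\ref{cor:BrezisNirenbergpernoi} applied with $c\equiv 0$ then yields a positive lower bound $w\geq c(\Omega',w)>0$ on every $\Omega'\Subset\Omega$.

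This uniform local positivity legitimizes two-sided perturbations: for $\varphi\in C_0^\infty(\Omega)$ of either sign and $|t|$ small, $w+t\varphi$ stays bounded away from $0$ on $\mathrm{supp}\,\varphi$, so $t\mapsto J_\lambda(w+t\varphi)$ is smooth near $t=0$ and its vanishing first derivative produces the Euler--Lagrange identity $\int\langle\nabla_{\mathbb G}w,\nabla_{\mathbb G}\varphi\rangle_{\mathfrak{g}_{1}}=\lambda\int w^{-\gamma}\varphi$ for every $\varphi\in C_0^\infty(\Omega)$; hence $w$ is a weak solution of \eqref{eq:Singular_Problem} in the sense of Definition~\ref{def:weak_sub_super_sol}. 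The $L^\infty$-bound is then obtained by a Moser-type iteration adapted to the Carnot-group setting, most cleanly performed on the regularised problems $-\Delta_{\mathbb G}w_n=\lambda(w_n+1/n)^{-\gamma}$ (whose classical solutions converge monotonically to $w$ by comparison and admit uniform sup-bounds via $w_n^{1+2\beta}$ test functions).

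Uniqueness (and the minimality of any weak solution) is obtained by a convexity-type argument: given an arbitrary weak solution $u$ and any $v\in S_0^1(\Omega)$ with $v\geq 0$, the tangent-line inequality $v^{1-\gamma}\leq u^{1-\gamma}+(1-\gamma)u^{-\gamma}(v-u)$ (concavity of $s\mapsto s^{1-\gamma}$) combined with the Euler--Lagrange equation for $u$ tested with $v-u$ (admissible since $u,v\in S_0^1(\Omega)\cap L^\infty(\Omega)$ and $u$ is locally bounded away from zero) makes the first-order terms cancel, leaving $J_\lambda(v)-J_\lambda(u)\geq \tfrac12\||\nabla_{\mathbb G}(v-u)|\|_{L^2}^2\geq 0$ with equality only if $v=u$. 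Thus every weak solution coincides with the unique global $S_0^1(\Omega)$-minimizer $w_\lambda$ of $J_\lambda$, and $J_\lambda(w_\lambda)<0$ by the first paragraph. The main obstacle is the positivity-plus-Euler--Lagrange step, because $J_\lambda$ is not G\^ateaux differentiable at points where $u$ vanishes; the monotone-convergence trick together with Corollary~\ref{cor:BrezisNirenbergpernoi} is precisely what resolves both issues simultaneously.
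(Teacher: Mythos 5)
Your overall architecture (direct method, positivity, Euler--Lagrange, $L^\infty$, uniqueness) agrees with the paper's, but several of the key steps use genuinely different mechanisms, and the comparison is instructive.

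\textbf{Positivity and the weak supersolution inequality.} You obtain $w>0$ a.e.\,by a purely variational argument: by concavity of $s\mapsto s^{1-\gamma}$ the difference quotient $t^{-1}\big[(w+t\varphi)^{1-\gamma}-w^{1-\gamma}\big]$ increases as $t\downarrow 0$, so monotone convergence forces the singular contribution to diverge if $|\{w=0\}\cap\{\varphi>0\}|>0$, contradicting minimality. This is more self-contained than the paper's route, which first uses Fatou only to infer $-\Delta_{\mathbb G}w\ge 0$ weakly, then invokes the Strong Maximum Principle (Proposition~\ref{prop:SMP}, i.e.\,the Harnack machinery). Your argument also delivers $w^{-\gamma}\in L^1_{\mathrm{loc}}$ and the supersolution inequality $\int\langle\nabla_{\mathbb G}w,\nabla_{\mathbb G}\varphi\rangle\ge\lambda\int w^{-\gamma}\varphi$ in one stroke, which the paper obtains separately.

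\textbf{Euler--Lagrange.} You first establish the uniform local positivity $w\geq c(\Omega',w)>0$ via Corollary~\ref{cor:BrezisNirenbergpernoi}, and only then pass to two-sided perturbations $w+t\varphi$, so that the singular term is smooth for small $|t|$. The paper deliberately avoids needing a local lower bound at this stage: it uses the scaling identity $\frac{d}{dt}J_\lambda((1+t)w_\lambda)|_{t=0}=0$ together with testing the Fatou inequality against $(w_\lambda+\varepsilon\psi)_+$, a trick that works without knowing $w_\lambda$ is locally bounded away from zero. Both are correct; yours is conceptually simpler once Corollary~\ref{cor:BrezisNirenbergpernoi} is in hand.

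\textbf{Boundedness.} This is the one place where your proposal is soft. The paper runs a Stampacchia iteration directly on $w_\lambda$ with the truncations $(w_\lambda-k)_+$, using that $w_\lambda^{-\gamma}(w_\lambda-k)_+\le k^{-\gamma}(w_\lambda-k)_+$ controls the singular term for $k\ge 1$, then applies \cite[Lemma B.1]{KS}. You suggest a Moser iteration on the regularised problems $-\Delta_{\mathbb G}w_n=\lambda(w_n+1/n)^{-\gamma}$, with $w_n\to w$ ``monotonically by comparison.'' Two objections: (i) the well-posedness and classical regularity of the regularised problems in a Carnot group is itself nontrivial and not provided by the paper's toolkit; (ii) the monotone convergence ``by comparison'' is essentially a comparison principle for singular problems and risks circularity, since uniqueness is the very thing being proved. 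If you instead run Moser or Stampacchia directly on the weak solution $w$ (noting $w^{-\gamma}$ is small where $w$ is large), the detour through regularisation disappears and the argument is clean.

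\textbf{Uniqueness.} Your convexity argument, $J_\lambda(v)\ge J_\lambda(u)+\tfrac12\|\nabla_{\mathbb G}(v-u)\|_{L^2}^2$ for any weak solution $u$ and any $v\ge 0$ in $S_0^1(\Omega)$ (via the tangent-line inequality for $s\mapsto s^{1-\gamma}$ and testing the equation against $v-u$, admissible by Remark~\ref{rem:defweaksolPb}-2), is different from the paper's and in some ways stronger: it shows in one shot that \emph{every} weak solution is the global minimizer, and hence that the weak solution is unique. The paper instead uses a truncation argument with the cutoffs $\theta_\varepsilon$ to deduce $w_\lambda\le z_\lambda$ and $z_\lambda\le w_\lambda$ separately. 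Both are standard; your version dovetails more neatly with the statement that $w_\lambda$ is a global minimizer.

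In summary: the proof is correct in its main lines, gives a valid and in places more elementary alternative to the paper's, but the $L^\infty$ step should be reformulated to avoid the regularisation detour.
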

		\begin{proof}
		The proof of this theorem is very similar
		to that of  \cite[Theorem 1.1]{SunWuLong} (for the  solvability of
		\eqref{eq:Singular_Problem}), and exploits a classical scheme of Stampacchia (for the
		global boundedness of $w_\lambda$); however, we present it
		here in detail for the sake of completeness.
		
			To ease the readability, we split the proof into five steps.
			\medskip
			
			\textsc{Step I).} In this first step we prove that $J_\lambda$ possesses a 
			\emph{global and strictly negative minimum}, which is attained by
			a function $w_\lambda\in S_0^1(\Omega)\setminus\{0\}$ 
			such that $w_\lambda \geq 0$ a.e.\,in $\Omega$.
			\vspace*{0.1cm}

			To this we first observe that, since $S_0^1(\Omega)$ is a Hilbert
			space and since $0<1-\gamma<1$, the functional $J_\lambda$ is weakly
			lower semicontinuous on $S_0^1(\Omega)$ (notice that $\||\nabla_\mathbb{G}u|\|_{L^2(\Omega)}$
			is precisely the \emph{norm} of $S_0^1(\Omega)$). Moreover, owing to the Sobolev inequality
			\eqref{eq:Sobolev_Ineq} and using H\"older's inequality, we have
			\begin{equation*}
				J_{\lambda}(u) \geq  \dfrac{\|u\|^{2}_{S^{1}_{0}(\Omega)}}{2} - 
				\dfrac{\lambda C}{1-\gamma}\|u\|^{1-\gamma}_{S^{1}_{0}(\Omega)}
			\end{equation*}
			(for some positive constant $C$ only depending on $Q$ and $\Omega$),
			and this proves that $J_\lambda$ is \emph{coercive} on $S_0^1(\Omega)$.
			Gathering these facts, we then derive that 
			$J_\lambda$
			attains a \emph{global minimum}, that is, there exists 
			a function $w_\lambda\in S_0^1(\Omega)$
			such that
			$$J_\lambda(w_\lambda) = \min_{u\in S_0^1(\Omega)}J_\lambda(u)=m_\lambda.$$
			We now observe that, if $\varphi\in C_0^\infty(\Omega)$ is a fixed non-vanishing 
			function, we have
			$$J_\lambda(t\varphi) = \frac{t^2}{2}\int_{\Omega}|\nabla_{\mathbb{G}}\varphi|^{2} 
			-\dfrac{\lambda t^{1-\gamma}}{1-\gamma} \int_{\Omega}|\varphi|^{1-\gamma}\to-\infty
			\quad\text{as $t\to0^+$}$$
			(recall that $0<1-\gamma<1$); as a consequence, we have
			 $m_\lambda < 0$, and thus $w_\lambda\not\equiv 0$.

			 Finally, taking into account that $J_\lambda(|u|)=J_\lambda(u)$ for every $u\in S_0^1(\Omega)$,
			 by possibly replacing the function $w_\lambda$ with $|w_\lambda|$, we can assume that
			 $w_\lambda \geq 0$ a.e.\,in $\Omega$.
			 \medskip
			 
			 \textsc{Step II).} In this second step we prove that the global, non-negative
			 minimizer $w_\lambda\in S_0^1(\Omega)$ 
			 obtained in the previous step is actually strictly positive (a.e.) in $\Omega$.
			 
			 To this end, we arbitrarily fix a \emph{non-negative} function $\varphi\in C_0^\infty(\Omega)$
			 and we notice that, since 
			 the function $w_\lambda$ is a global minimizer of $J_\lambda$, we have
			 \begin{align*}
			  0 & \leq \liminf_{t\to 0^+}\Big\{
			  \frac{1}{t}(J_\lambda(w_\lambda+t\varphi)-J_\lambda(w_\lambda))\Big\} 
			  \leq\int_\Omega\langle\nabla_\mathbb{G}w_\lambda,
			  \nabla_\mathbb{G}\varphi\rangle_{\mathfrak{g}_{1}},
			 \end{align*}
			 and thus $-\Delta_\mathbb{G}u\geq 0$ in the weak sense on $\Omega$;
			 from this, by using the Strong Maximum Principle in Proposition 
			 \ref{prop:SMP} (and since that $w_\lambda\not\equiv 0$), we deduce that 
			 $w_\lambda > 0$ a.e.\,in $\Omega$.
			 \medskip
			 
			 \textsc{Step III).} In this third step, we prove that the function  $w_\lambda\in S_0^1(\Omega)$
			 (which we have already proved to be a glo\-bal and strictly positive minimzer of
			 $J_\lambda)$ is a weak solution of 
			 \begin{equation} \label{eq:PDEpurelysingular}
			  -\Delta_\mathbb{G}w_\lambda = \frac{\lambda}{w_{\lambda}^\gamma}\quad\text{in $\Omega$},
			 \end{equation}
			 and hence of the whole problem \eqref{eq:Singular_Problem}.
			 
			 To this end we first observe that, since $w_\lambda$ is a global minimizer of $J_\lambda$, the point $t_0 = 0$ is a 
			 \emph{global minimum} for the (smooth) map $h:\mathbb{R}\to\mathbb{R}$ defined by
			 $$h(t) = J_\lambda\big((1+t)w_\lambda)\big)
			 = \frac{(1+t)^2}{2}\int_\Omega|\nabla_\mathbb{G}w_\lambda|^2-
			 \frac{\lambda(1+t)^{1-\gamma}}{1-\gamma}\int_\Omega|w_\lambda|^{1-\gamma};$$
			 as a consequence, we have
			 \begin{equation} \label{eq:operatorPartzero}
			 	0 = h'(0) = \int_\Omega|\nabla_\mathbb{G}w_\lambda|^2-
			 \lambda\int_\Omega|w_\lambda|^{1-\gamma}. 
			 \end{equation}
			 Moreover, again by the minimality of $J_\lambda(w_\lambda)$, we also get
			 \begin{align*}
			 	0&\leq J_\lambda(w_\lambda+t\varphi)
			 	-J_\lambda(w_\lambda) \\
			 	& \leq \int_\Omega\langle\nabla_\mathbb{G}w_\lambda,\nabla_\mathbb{G}\varphi\rangle_{\mathfrak{g}_{1}}
			 + \frac{t^2}{2}\int_\Omega |\nabla_\mathbb{G}\varphi|^2
			  - \frac{\lambda}{1-\gamma}\int_\Omega\big[
			 (w_\lambda+t\varphi)^{1-\gamma}-w_\lambda^{1-\gamma}\big]
			 \end{align*}
			 for every \emph{non-negative function $\varphi\in S_0^1(\Omega)$ and every $t > 0$};
			 thus, by letting $t\to 0^+$ with the aid of the Fatou Lemma
			 (and by the arbitrariness of $\varphi$),
			 we obtain 
			 \begin{equation} \label{eq:dausareperconcludereHaitao}
			 \int_\Omega\big(\langle\nabla_\mathbb{G}w_\lambda,\nabla_\mathbb{G}\varphi\rangle_{\mathfrak{g}_{1}}
			 - {\lambda} w_\lambda^{-\gamma}\varphi\big)\geq 0,\quad\text{for every
			 $\varphi\in S_0^1(\Omega),\,\varphi\geq 0$}.
			 \end{equation}
			 With estimates \eqref{eq:operatorPartzero}-\eqref{eq:dausareperconcludereHaitao} at hand,
			 we can easily complete the demonstration of this step. 
			 
			 Indeed, given any
			 $\psi\in S_0^1(\Omega)$, by applying
			 \eqref{eq:dausareperconcludereHaitao} to
			 $$\varphi = (w_\lambda+\varepsilon\psi)_+\in S_0^1(\Omega)$$
			 (and with $\varepsilon > 0$ arbitrarily fixed), and by using \eqref{eq:operatorPartzero}, we get
			 \begin{align*}
			  & 0 \leq \int_\Omega\big(\langle\nabla_\mathbb{G}w_\lambda,\nabla_\mathbb{G}\varphi\rangle_{\mathfrak{g}_{1}}
			 - {\lambda}w_\lambda^{-\gamma}\varphi\big)
			 \\
			 & \qquad = \int_{\{w_\lambda+\varepsilon\psi\geq 0\}}\big(\langle\nabla_\mathbb{G}w_\lambda,\nabla_\mathbb{G}(w_\lambda+\varepsilon\psi)\rangle_{\mathfrak{g}_{1}}
			 - {\lambda}w_\lambda^{-\gamma}(w_\lambda+\varepsilon\psi)\big) \\
			 & \qquad = \int_{\Omega}\big(\langle\nabla_\mathbb{G}w_\lambda,\nabla_\mathbb{G}(w_\lambda+\varepsilon\psi)\rangle_{\mathfrak{g}_{1}}
			 - {\lambda}w_\lambda^{-\gamma}(w_\lambda+\varepsilon\psi)\big)	 \\
			 & \qquad\qquad
			 - \int_{\{w_\lambda+\varepsilon\psi< 0\}}\big(\langle\nabla_\mathbb{G}w_\lambda,\nabla_\mathbb{G}(w_\lambda+\varepsilon\psi)\rangle_{\mathfrak{g}_{1}}
			 - {\lambda}w_\lambda^{-\gamma}(w_\lambda+\varepsilon\psi)\big)\\
			 & \qquad
			 = \Big(\int_\Omega|\nabla_\mathbb{G}w_\lambda|^2-
			 \lambda\int_\Omega|w_\lambda|^{1-\gamma}\Big)
			  +\varepsilon\int_\Omega\big(\langle\nabla_\mathbb{G}w_\lambda,\nabla_\mathbb{G}\psi\rangle_{\mathfrak{g}_{1}}
			 - {\lambda} w_\lambda^{-\gamma}\psi\big) \\
			 & \qquad\quad - \int_{\{w_\lambda+\varepsilon\psi< 0\}}\big(\langle\nabla_\mathbb{G}w_\lambda,\nabla_\mathbb{G}(w_\lambda+\varepsilon\psi)\rangle_{\mathfrak{g}_{1}}
			 - {\lambda}w_\lambda^{-\gamma}(w_\lambda+\varepsilon\psi)\big)
			 \\
			 & \qquad (\text{here we use \eqref{eq:operatorPartzero}}) \\
			 & \qquad = \varepsilon\int_\Omega\big(\langle\nabla_\mathbb{G}w_\lambda,\nabla_\mathbb{G}\psi\rangle_{\mathfrak{g}_{1}}
			 - {\lambda} w_\lambda^{-\gamma}\psi\big) \\
			 & \qquad\quad - \int_{\{w_\lambda+\varepsilon\psi< 0\}}\big(\langle\nabla_\mathbb{G}w_\lambda,\nabla_\mathbb{G}(w_\lambda+\varepsilon\psi)\rangle_{\mathfrak{g}_{1}}
			 - {\lambda}w_\lambda^{-\gamma}(w_\lambda+\varepsilon\psi)\big)
			 \\
			& \qquad \leq 
			\varepsilon\int_\Omega\big(\langle\nabla_\mathbb{G}w_\lambda,\nabla_\mathbb{G}\psi\rangle_{\mathfrak{g}_{1}}
			 - {\lambda} w_\lambda^{-\gamma}\psi\big) 
			 - \varepsilon\int_{\{w_\lambda+\varepsilon\psi< 0\}}\langle\nabla_\mathbb{G}w_\lambda,\nabla_\mathbb{G}\psi\rangle, 
			  \end{align*}
			 where we have used the fact that $w_\lambda > 0$ a.e.\,in $\Omega$. 
			 In view of this fact, we also recognize that the Lebesgue measure of the set $A_\varepsilon = \{w_\lambda+\varepsilon\psi< 0\}$
			 goes to $0$ as $\varepsilon\to 0^+$: indeed, since
			 $$\psi < -\frac{1}{\varepsilon}w_\lambda < 0\quad\text{on $A_\varepsilon$},$$
			 and since $\Omega\supseteq A_\varepsilon$ has finite measure, we have
			 $$\lim_{\varepsilon\to 0^+}|A_\varepsilon| = 
			 \Big|\bigcap_{\varepsilon > 0}A_\varepsilon\Big| = |\{w_\lambda\leq 0\}| = 0.$$
			 Hence, from the above estimate
			 (and by the arbitrariness of $\varepsilon > 0$), we obtain
			 \begin{align*}
			 	\int_\Omega\big(\langle\nabla_\mathbb{G}w_\lambda,\nabla_\mathbb{G}\psi\rangle_{\mathfrak{g}_{1}}
			 - {\lambda} w_\lambda^{-\gamma}\psi\big) 
			\geq \lim_{\varepsilon\to 0^+}\int_{\{w_\lambda+\varepsilon\psi< 0\}}\langle\nabla_\mathbb{G}w_\lambda,\nabla_\mathbb{G}\psi\rangle = 0.
			 \end{align*}
			 This, together with the arbitrariness of
			 $\psi \in S_0^1(\Omega)$, allows us to finally conclude that 
			 $w_\lambda$ is a weak solution of equation
			 \eqref{eq:PDEpurelysingular}, as desired.
			 			 \medskip
			 
			 \textsc{Step IV).} In this fourth step we show that $w_\lambda\in L^\infty(\Omega)$.
			 To this end we first observe that, since we \emph{already know} that
			 $w_\lambda$ is a weak solution
			 of \eqref{eq:Singular_Problem}, for every $k> 1$ we have
			 $$
			 \int_{\Omega}
			  \langle\nabla_\mathbb{G}w_\lambda,\nabla_\mathbb{G}[(w_\lambda-k)_+]\rangle_{\mathfrak{g}_{1}}
			  -\lambda\int_{\Omega}w_\lambda^{-\gamma}(w_\lambda-k)_+ = 0;$$
			  from this, by using the H\"older inequality and the weighted Young inequality, together
			  with the continuous embedding of $S_0^1(\Omega)$ into $L^{2_Q^\star}(\Omega)$, we get
	\begin{align*}
	 \int_{\Omega}
			  |\nabla_\mathbb{G}[(w_\lambda-k)_+]|^2 & \leq
			   \frac{\lambda}{k^\gamma}\int_{A_k}(w_\lambda-k)_+
		\leq \frac{\lambda}{k^\gamma}|A_k|^{1-1/2_Q^\star}
		\Big(\int_{\Omega}|(w_\lambda-k)_+|^{2_Q^\star}\Big)^{1/2_Q^\star} \\
		& \leq c|A_k|^{1-1/2_Q^\star}\Big(\int_{\Omega}|\nabla_\mathbb{G}[w_\lambda-k)_+]|^{2}\Big)^{1/2}
		\\
		& \leq
		c(\varepsilon) |A_k|^{2(1-1/2_Q^\star)}+\varepsilon\int_{\Omega}|\nabla_\mathbb{G}[w_\lambda-k)_+]|^{2},
	\end{align*}
	where $c(\varepsilon) > 0$ is a suitable constant depending on $\varepsilon$, and
	$$A_k = \{w_\lambda\geq k\}.$$
	From this, by choosing $\varepsilon > 0$ sufficiently small, we get
	\begin{equation} \label{eq:estimLinfDaUsare}
	 \int_{\Omega}
			  |\nabla_\mathbb{G}[(w_\lambda-k)_+]|^2
			  \leq c |A_k|^{2(1-1/2_Q^\star)}.
	\end{equation}
	With estimate \eqref{eq:estimLinfDaUsare} at hand, 
	we can easily complete the proof of this step (and of the  whole
	theorem). Indeed, owing to the cited \eqref{eq:estimLinfDaUsare}, for every
	$1< k < h$ we have
	\begin{align*}
	 (h-k)^{2}|A_h|^{2/2_Q^\star}
	 & \Big(\int_{A_h}(w_\lambda-k)_+^{2_Q^\star}\Big)^{2/2_Q^\star}
	 \leq \Big(\int_{A_k}(w_\lambda-k)_+^{2_Q^\star}\Big)^{2/2_Q^\star} \\
	 & (\text{here we use once again \eqref{eq:Sobolev_Ineq}}) \\
	 & \leq c\int_{\Omega}
			  |\nabla_\mathbb{G}[(w_\lambda-k)_+]|^2\leq 
			  c |A_k|^{2(1-1/2_Q^\star)},
	\end{align*}
	for some constant $c > 0$ independent of $k,h$; as a consequence, we get
	$$|A_h|\leq \frac{c}{(h-k)^{2_Q^\star}}|A_k|^{2_Q^\star-1}\quad\text{for all $1< k < h$}.$$
	Since, obviously, $q = 2_Q^\star-1 > 1$, we can apply \cite[Lemma B.1]{KS}: this gives
	$$|A_{d}| = |\{w_\lambda\geq d\}| = 0$$
	for some $d > 0$, and thus $w_\lambda\in L^\infty(\Omega)$, as desired.
	\medskip
	
	\textsc{Step V)}. In this last step, we prove that $w_\lambda$ is
	the \emph{unique} weak solution of problem \eqref{eq:Singular_Problem}. To this end, let us suppose that $z_\lambda\in S_0^1(\Omega)$ is another
	weak solution of the same problem; we then choose a smooth function $\theta\in C^\infty(\mathbb{R})$ satisfying the following properties
		\begin{itemize}
		 \item  $\theta(t) = 0$ for $t \leq 0$ and $\theta(t) =1$ for $t \geq 1$;
		 \item $\theta$ is non-decreasing on $\mathbb{R}$;
		 \end{itemize}
		and we define (for every $\varepsilon > 0$)
$$\theta_\varepsilon(t): = \theta\left(\frac{t}{\varepsilon}\right).$$ 
Now, since $\varphi_\varepsilon\in S_0^1(\Omega)$, and since
$w_\lambda,z_\lambda$ solve \eqref{eq:Singular_Problem}, we get	
\begin{align*}
	(\ast)\,\,&\int_{\Omega}\langle\nabla_{\mathbb{G}} w_{\lambda}, \nabla_{\mathbb{G}}(w_{\lambda}-z_\lambda)\rangle_{\mathfrak{g}_{1}} \, \theta'_{\varepsilon}(w_{\lambda}-z_\lambda)
			- \lambda \int_{\Omega}\dfrac{\theta_{\varepsilon}(w_{\lambda}-z_\lambda)}{w_{\lambda}^{\gamma}} = 0; \\[0.1cm]
	(\ast)\,\,& \int_{\Omega}\langle\nabla_{\mathbb{G}} z_\lambda, \nabla_{\mathbb{G}}(w_{\lambda}-z_\lambda)\rangle_{\mathfrak{g}_{1}}\, \theta'_{\varepsilon}(w_{\lambda}-z_\lambda) 
	- \lambda \int_{\Omega}\dfrac{\theta_{\varepsilon}(w_{\lambda}-z_\lambda)}{z_\lambda^{\gamma}} = 0.	
		\end{align*}
	As a consequence, by subtracting the above identities, we obtain
	\begin{equation}\label{eq:SubtractionUniqueness}
		\begin{aligned}
			0 &\geq  -\int_{\Omega}|\nabla(z_{\lambda}-w_\lambda)|_{\mathbb{G}}^2 \, \theta'_{\varepsilon}(w_{\lambda}-z_\lambda)
			 \\
			 & =  \lambda\int_\Omega\Big(\frac{1}{z_\lambda^\gamma}-\frac{1}{w_\lambda^\gamma}\Big)\theta_\varepsilon(w_\lambda-z_\lambda).
		\end{aligned}
	\end{equation}
	From this, by letting $\varepsilon \to 0^+$ we derive that	
	$$\lim_{\varepsilon\to 0^+}\int_{\Omega}\left( \dfrac{1}{z_\lambda^{\gamma}} - \dfrac{1}{w_{\lambda}^{\gamma}}\right) \theta_{\varepsilon}(w_{\lambda}-z_\lambda)
	= \int_{\{w_{\lambda}> z_\lambda\}}\left( \dfrac{1}{z_\lambda^{\gamma}}- \dfrac{1}{w_{\lambda}^{\gamma}}\right)\leq 0,$$
	 from which we derive that
	$|\{ w_{\lambda}> z_\lambda\}| = 0$, that is,
	$w_\lambda\leq z_\lambda$ a.e.\,on $\Omega$.	
	Finally, by interchan\-ging the role of $z_\lambda$ and $w_\lambda$,
	we conclude that
	$$\text{$w_\lambda\equiv z_\lambda$ a.e.\,in $\Omega$},$$
	and this proves the uniqueness of $w_\lambda$.
			\end{proof}

\section{Existence of the first solution}\label{sec:First_Solution}
To begin with, we define
\begin{equation}\label{eq:DefinitionLambda}
	\Lambda := \sup \{ \lambda >0: \eqref{eq:Main_Problem}_\lambda \textrm{ admits a weak solution}\}.
\end{equation}
We then turn to prove in this first part of the section the following facts:
\vspace*{0.1cm}

a)\,\,$\Lambda$ is well-defined and $\Lambda < +\infty$;
\vspace*{0.05cm}

b)\,\,problem \eqref{eq:Main_Problem}$_\lambda$ admits a weak solution for every $0<\lambda\leq \Lambda$.
\vspace*{0.1cm}

\noindent We begin by proving assertion a).
\begin{lemma} \label{lem:Lambdafinito}
	Let $\Lambda$ be as in \eqref{eq:DefinitionLambda}. Then $\Lambda \in (0,+\infty)$.
	\begin{proof}
		We consider the functional 
		\begin{equation}\label{eq:FunctionalIlambda}
			I_{\lambda}(u) := \dfrac{1}{2}\int_{\Omega}|\nabla_{\mathbb{G}}u|^{2} - \dfrac{\lambda}{1-\gamma}\int_{\Omega}|u|^{1-\gamma} - \dfrac{1}{2^{\star}_{Q}}\int_{\Omega}|u|^{2^{\star}_{Q}}, \quad u \in S^{1}_{0}(\Omega).
		\end{equation}
		First of all, by combining H\"older's and Sobolev's inequalities, we have
		\begin{equation} \label{eq:todeduce21Haitao}
			\begin{split}
				& \mathrm{a)}\,\,\int_\Omega |u|^{2^{\star}_{Q}}  \leq C\||\nabla_{\mathbb{G}} u|\|_{L^{2}(\Omega)}^{2^{\star}_{Q}}; \\
				& \mathrm{b)}\,\,\int_{\Omega}|u|^{1-\gamma}  \leq C
				\|u\|_{L^{2^{\star}_{Q}}(\Omega)}^{(1-\gamma)/2^{\star}_{Q}} \leq C\||\nabla_{\mathbb{G}} u|\|^{1-\gamma}_{L^2(\Omega)};
			\end{split}
		\end{equation}
		\noindent as a consequence, denoting by 
		$$B_{r}:= \left\{ u \in S^{1}_{0}(\Omega): \|u\|_{S^{1}_{0}(\Omega)}\leq r\right\},$$
		the above estimates (\ref{eq:todeduce21Haitao}-a) imply the existence of $r_0>0$ and $\delta_0>0$ such that 
		\begin{equation}\label{eq:2.1Haitao}
			\left\{ \begin{array}{rl}
				\tfrac{1}{2}\||\nabla_{\mathbb{G}} u|\|_{L^{2}(\Omega)}^2 - \tfrac{1}{2^{\star}_{Q}}\|u\|^{2^{\star}_{Q}}_{L^{2^{\star}_{Q}}(\Omega)}\geq 2 \delta_0 & \textrm{for all } u \in \partial B_{r_0},\\[0.2cm]
				\tfrac{1}{2}\||\nabla_{\mathbb{G}} u|\|_{L^{2}(\Omega)}^2 - \tfrac{1}{2^{\star}_{Q}}\|u\|^{2^{\star}_{Q}}_{L^{2^{\star}_{Q}}(\Omega)}\geq 0 & \textrm{for all } \in B_{r_0},
			\end{array}\right.
		\end{equation}
		\noindent hence, again by (\ref{eq:todeduce21Haitao}-b) we conclude that there exists $\lambda_{\star}>0$ 
		such that
		\begin{equation}\label{eq:2.2Haitao}
			\begin{split}
				I_{\lambda}\big|_{\partial B_{r_0}} & \geq 2\delta_0 - \frac{\lambda\,C}{1-\gamma}r_0^{1-\gamma}
				\geq \delta_0, \quad \textrm{for every } \lambda \in (0,\lambda_{\star}].
			\end{split}
		\end{equation}
		We now define $c_{\star}:= \inf_{B_{r_0}}I_{\lambda_{\star}}$, and we notice that $c_{\star}<0$. Indeed, for every $v\not\equiv 0$, it holds that
		\begin{equation*}
			I_{\lambda_{\star}}(tv) = t^2 \||\nabla_{\mathbb{G}}v|\|_{L^{2}(\Omega)}^2 - \dfrac{\lambda_{\star}}{1-\gamma}t^{1-\gamma}\int_{\Omega}|v|^{1-\gamma}  - \dfrac{t^{2^{\star}_{Q}}}{2^{\star}_{Q}}\int_{\Omega}|v|^{2^{\star}_{Q}},
		\end{equation*}
		\noindent which becomes negative for $t>0$ small enough because $0<1-\gamma<1$. The argument is now pretty standard. We first consider a minimizing sequence $\{u_j\}_j
		\subset B_{r_0}$ related to $c_{\star}$ and we know that there exists $u_{\star}$ such that, up to subsequences,
		\begin{itemize}
			\item $u_j \to u_{\star}$ as $j \to +\infty$ weakly in $S^{1}_{0}(\Omega)$;
			\item $u_j \to u_{\star}$ as $j \to +\infty$ strongly in $L^{r}(\Omega)$ for every $r \in [2,2^{\star}_{Q})$;
			\item $u_j \to u_{\star}$ as $j \to +\infty$ pointwise a.e. in $\Omega$.
		\end{itemize}
		Moreover, since $I_{\lambda}(|u|) = I_{\lambda}(u)$ for every $\lambda>0$, we may also assume that $u_j \geq 0$.
		Combining \eqref{eq:2.2Haitao} with $c_{\star}<0$, we realize that there exists a positive, and independent of $j$, constant $\varepsilon_0>0$  such that
		\begin{equation} \label{eq:boundrhoujtodeduce}
			\|u_j\|_{S^{1}_{0}(\Omega)} \leq r_0 - \varepsilon_0.
		\end{equation}
		Now, by combining the
		algebraic inequality $(a+b)^p\leq a^b+b^p$ (holding true for
		all $a,b\geq 0$ and $0<p<1$) with the
		H\"{o}lder inequality, as $j\to +\infty$ we have
		\begin{equation*}
			\begin{split}
				\int_{\Omega}u_j^{1-\gamma} & \leq \int_{\Omega}u_{\star}^{1-\gamma}+
				\int_{\Omega}|u_j - u_{\star}|^{1-\gamma}  
				\\
				& \leq \int_{\Omega}u_{\star}^{1-\gamma}+ 
				C \|u_j-u_{\star}\|^{1-\gamma}_{L^{2}(\Omega)} = \int_{\Omega}u_{\star}^{1-\gamma} + o(1),
			\end{split}
		\end{equation*}
		\noindent and similarly
		\begin{equation*}
			\int_{\Omega}u_{\star}^{1-\gamma} \leq \int_{\Omega}u_{j}^{1-\gamma}+\int_{\Omega}|u_j - u_{\star}|^{1-\gamma} = \int_{\Omega}u_{j}^{1-\gamma} + o(1),
		\end{equation*}
		\noindent which in turn implies
		\begin{equation}\label{eq:2.3Haitao}
			\int_{\Omega}u_{j}^{1-\gamma}  = \int_{\Omega}u_{\star}^{1-\gamma}  + o(1), \quad \textrm{as } j \to +\infty.
		\end{equation}
		By Brezis-Lieb lemma, see \cite{BrezisLieb}, it is now well known that
		\begin{equation}
			\|u_j\|^{2^{\star}_{Q}}_{L^{2^{\star}_{Q}}(\Omega)} = \|u_{\star}\|^{2^{\star}_{Q}}_{L^{2^{\star}_{Q}}(\Omega)} + \|u_j - u_{\star}\|^{2^{\star}_{Q}}_{L^{2^{\star}_{Q}}(\Omega)} +o(1), \quad \textrm{as } j \to +\infty,
		\end{equation}
		\noindent and 
		\begin{equation}\label{eq:2.5Haitao}
		\||\nabla_{\mathbb{G}}u_j|\|_{\mathbb{G}}^2 = \||\nabla_{\mathbb{G}}u_{\star}|\|_{\mathbb{G}}^2 + \||\nabla_{\mathbb{G}}(u_j -u_{\star})|\|_{\mathbb{G}}^2 + o(1), \quad \textrm{as } j \to +\infty.
		\end{equation}
		By combining \eqref{eq:2.5Haitao} and \eqref{eq:boundrhoujtodeduce}, 
		it follows that $u_{\star}\in B_{r_0}$ and that
		$u_j - u_{\star} \in B_{r_0}$ for big enough $j$, and this allows to use the second line of \eqref{eq:2.1Haitao} on $u_j - u_{\star}$. 
		Using now \eqref{eq:2.3Haitao}-\eqref{eq:2.5Haitao}, as $j\to +\infty$, we find
		\begin{equation*}
			\begin{aligned}
				c_{\star} &= I_{\lambda_{\star}}(u_j) + o(1)\\
				&= I_{\lambda_{\star}}(u_{\star}) + \dfrac{1}{2}\||\nabla_{\mathbb{G}}(u_j -u_{\star})|\|_{\mathbb{G}}^2- \dfrac{1}{2^{\star}_{Q}}\|u_j - u_{\star}\|^{2^{\star}_{Q}}_{L^{2^{\star}_{Q}}(\Omega)}+ o(1)\\
				&\geq I_{\lambda_{\star}}(u_{\star}) + o(1) \geq c_{\star} + o(1),
			\end{aligned}
		\end{equation*}
		\noindent which proves that $u_{\star} \geq 0$, $u_{\star}\not\equiv
		0$ is a local minimizer of $I_{\lambda_{\star}}$ in the $S^{1}_{0}(\Omega)$-topology.
		From this, by arguing exactly as in the proof
		of Theorem \ref{thm:Singular_Problem} (the unique difference being the presence of the critical term, see also \cite[Lemma 2.1]{Haitao}), we show that $u_{\star}$ is actually a weak solution of  \eqref{eq:Main_Problem}$_\lambda$, 
		and hence we get that 
		$$\Lambda \geq \lambda_{\star}>0.$$
		\indent Let us now prove that $\Lambda < +\infty$. Following \cite{Haitao}, we consider 
		the \emph{first 
			Dirichlet eigenvalue $\mu_1$ of the operator $-\Delta_{\mathbb{G}}$} in $\Omega$, namely 
		$$\mu_1 = \min\big\{\||\nabla_{\mathbb{G}}|\|_{L^{2}(\Omega)}^2:\,\text{$u\in S^{1}_{0}(\Omega)$ and $\|u\|^2_{L^2(\Omega)} = 1$}\big\},$$
		and we let $e_1\in S^{1}_{0}(\Omega)$ 
		be the principal eigenfunction associated with $\mu_1$, i.e.,
		\vspace*{0.1cm}
		
		a)\,\,$\|e_1\|_{L^2(\Omega)} = 1$ and $e_1 > 0$ a.e.\,in $\Omega$; 
		\vspace*{0.05cm}
		
		b)\,\,$\||\nabla_{\mathbb{G}}e_1|\|_{\mathbb{G}}^2 = \mu_1$.
		\vspace*{0.1cm}
		
		\noindent The proof of the existence of such a function $e_1$ follows by rather standard arguments.
		
		Now, assuming that there exists
		a weak solution $u\in S^{1}_{0}(\Omega)$ of problem
		\eqref{eq:Main_Problem}$_\lambda$ (for some $\lambda > 0$),
		and using this $e_1$ as a test function in identity \eqref{eq:weak_sub_super_sol}, we get
		\begin{equation*}
			\mu_1 \int_{\Omega}ue_1  = \int_{\Omega}\langle \nabla_{\mathbb{G}}u, \nabla_{\mathbb{G}}e_1\rangle_{\mathfrak{g}_{1}} = \int_{\Omega}(\lambda u^{-\gamma} + u^{2^{\star}_{Q}-1})e_1.
		\end{equation*}
		Setting $\overline{\Lambda}$ a constant such that
		$$\overline{\Lambda}  t^{-\gamma} + t^{2^{\star}_{Q}-1} >  \mu_1 t, \quad \textrm{for every } t>0,$$
		\noindent we find that $\lambda < \overline{\Lambda}$ and then $\Lambda < \overline{\Lambda}<+\infty$, as desired.
	\end{proof}
\end{lemma}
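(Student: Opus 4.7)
The assertion splits naturally into the two inequalities $\Lambda>0$ and $\Lambda<+\infty$, which require completely different arguments. For the lower bound, my strategy is to produce, for some sufficiently small $\lambda_\star>0$, a weak solution of $(\mathrm{P})_{\lambda_\star}$ by \emph{constrained minimization} of the natural energy functional
\[
 I_\lambda(u) = \frac{1}{2}\|u\|_{S_0^1(\Omega)}^2 - \frac{\lambda}{1-\gamma}\int_\Omega |u|^{1-\gamma} - \frac{1}{2^\star_Q}\int_\Omega |u|^{2^\star_Q}
\]
over a closed ball $\overline{B_{r_0}}\subset S_0^1(\Omega)$ of sufficiently small radius. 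The point is to exhibit a \emph{well structure}: using the Sobolev embedding \eqref{eq:Sobolev_Ineq}, the non-singular part $\tfrac{1}{2}\|u\|_{S_0^1}^2 - \tfrac{1}{2^\star_Q}\|u\|_{L^{2^\star_Q}}^{2^\star_Q}$ admits a radius $r_0>0$ where it is bounded below by $2\delta_0>0$ on $\partial B_{r_0}$ and remains non-negative inside $B_{r_0}$; since $\int|u|^{1-\gamma}$ is controlled by $\|u\|_{S_0^1}^{1-\gamma}$ with $1-\gamma<1$, for all sufficiently small $\lambda$ one still has $I_\lambda\geq\delta_0$ on $\partial B_{r_0}$. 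On the other hand, $I_\lambda(tv)$ is negative for $t>0$ small because the singular term $t^{1-\gamma}$ dominates, so $c_\star := \inf_{B_{r_0}} I_{\lambda_\star}<0<\inf_{\partial B_{r_0}} I_{\lambda_\star}$, which will trap any minimizing sequence strictly inside $B_{r_0}$.

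\textbf{Existence of the interior minimizer.} A minimizing sequence $\{u_j\}$ is bounded, hence by Remark \ref{rem:ConvergenceS01} (and after replacing $u_j$ by $|u_j|$, which leaves $I_\lambda$ invariant), up to a subsequence $u_j\rightharpoonup u_\star$ weakly in $S_0^1(\Omega)$, strongly in $L^r(\Omega)$ for $1\leq r<2^\star_Q$, and pointwise a.e., with $u_\star\geq 0$. The genuine obstacle is that the critical term is \emph{not weakly continuous}: I would resolve it via the Brezis--Lieb lemma, splitting both $\|u_j\|_{L^{2^\star_Q}}^{2^\star_Q}$ and $\|u_j\|_{S_0^1}^2$ into the contributions of $u_\star$ and $u_j-u_\star$, plus $o(1)$. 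Because $u_j-u_\star$ remains in $B_{r_0}$ for large $j$, the non-negativity of the non-singular part on $B_{r_0}$ yields $I_{\lambda_\star}(u_\star)\leq c_\star<0$, so $u_\star\not\equiv 0$ is a non-trivial minimizer.

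\textbf{From minimizer to weak solution.} I expect this to be the single hardest step, precisely because $I_\lambda$ fails to be Fr\'echet differentiable at $u_\star$ due to the singular term $u^{-\gamma}$. The plan is to mimic closely Step III of the proof of Theorem \ref{thm:Singular_Problem}: first perturb radially to get $\int|\nabla_\mathbb{G}u_\star|^2 - \lambda_\star \int u_\star^{1-\gamma} - \int u_\star^{2^\star_Q} = 0$, and then test the minimality inequality against $(u_\star+\varepsilon\psi)_+$ for arbitrary $\psi\in S_0^1(\Omega)$ and $\varepsilon>0$; splitting the resulting integral over $\{u_\star+\varepsilon\psi\geq 0\}$ versus $\{u_\star+\varepsilon\psi<0\}$, using the strict positivity of $u_\star$ (obtained from the Strong Maximum Principle in Proposition \ref{prop:SMP}) to show that the latter set has measure vanishing as $\varepsilon\to 0^+$, and passing to the limit with Fatou, one obtains \eqref{eq:weak_sub_super_sol} for all $\psi\in S_0^1(\Omega)$. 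This gives $\Lambda\geq\lambda_\star>0$.

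\textbf{Finiteness of $\Lambda$.} For the upper bound the idea is simpler and a priori. Let $\mu_1>0$ denote the first Dirichlet eigenvalue of $-\Delta_\mathbb{G}$ on $\Omega$ (existence follows from the compactness of the embedding $S_0^1(\Omega)\hookrightarrow L^2(\Omega)$) and $e_1>0$ a normalized principal eigenfunction (positivity following again from Proposition \ref{prop:SMP} applied to $e_1\geq 0$). If $u\in S_0^1(\Omega)$ is a weak solution of $(\mathrm{P})_\lambda$, testing against $e_1$ (which is allowed thanks to the extension noted in Remark \ref{rem:defweaksolPb}(2)) yields
\[
 \mu_1\int_\Omega u\,e_1 = \int_\Omega\bigl(\lambda u^{-\gamma} + u^{2^\star_Q-1}\bigr)e_1.
\]
Since the function $t\mapsto \lambda t^{-\gamma}+t^{2^\star_Q-1}-\mu_1 t$ on $(0,+\infty)$ explodes at $0^+$ and at $+\infty$, its minimum depends on $\lambda$ and becomes strictly positive for $\lambda$ large; choosing $\overline\Lambda>0$ such that $\overline\Lambda\, t^{-\gamma}+t^{2^\star_Q-1}>\mu_1 t$ for all $t>0$, the above identity fails for every $\lambda>\overline\Lambda$, forcing $\Lambda\leq\overline\Lambda<+\infty$.
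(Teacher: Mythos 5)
Your proposal follows the paper's own proof essentially step by step: the well structure on a small ball $B_{r_0}$ using the Sobolev inequality, constrained minimization with Brezis--Lieb splitting to handle the critical term, deducing $c_\star<0$ from the $t^{1-\gamma}$ dominance, upgrading the constrained minimizer to a weak solution via the radial perturbation plus $(u_\star+\varepsilon\psi)_+$ truncation argument borrowed from the purely singular problem, and finally testing against the first eigenfunction $e_1$ to obtain the upper threshold $\overline\Lambda$. The approach and the key lemmas invoked (Sobolev, Brezis--Lieb, Strong Maximum Principle) all coincide with the paper's.
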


Now we have established Lemma \ref{lem:Lambdafinito}, we then turn to 
prove assertion b), namely
the existence of at least one weak solution of problem \eqref{eq:Main_Problem}$_\lambda$ for
every $0<\lambda\leq \Lambda$. 
\medskip

To begin with, we prove the following simple yet important technical lemma.
\begin{lemma} \label{lem:SMPsoprasotto}
	Let $w,u\in S^{1}_{0}(\Omega)$ be a \emph{weak subsolution} 
	\emph{[}resp.\,\emph{weak supersolution}\emph{]}
	and a \emph{weak solution} of pro\-blem \eqref{eq:Main_Problem}$_\lambda$, respectively.
	We assume that
	\begin{itemize}
		\item[a)] $w\leq u$ \emph{[}resp.\,$w\geq u$\emph{]} a.e.\,in $\Omega$;
		\item[b)] for every open set $\Oo\Subset\Omega$ there exists $C = C(\Oo,w) > 0$ such that
		$$\text{$w \geq C$ a.e.\,in $\Oo$}.$$
	\end{itemize}
	Then, either $w\equiv u$ or $w<u$ \emph{[}resp.\,$w > u$\emph{]} a.e.\,in $\Omega$.
\end{lemma}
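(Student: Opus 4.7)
The plan is to reduce the statement to an application of the Strong Maximum Principle (Proposition \ref{prop:SMP}) to the auxiliary function $v := u - w \in S_0^1(\Omega)$, after showing that $v$ is a non-negative weak supersolution of a linear equation of the form $-\Delta_\mathbb{G} V + c(x) V = 0$ with a non-negative coefficient $c$ that is locally bounded. I treat the subsolution case only; the supersolution case is analogous after swapping the roles of $u$ and $w$. By hypothesis a), $v \geq 0$ a.e.\ in $\Omega$, so the goal is precisely the dichotomy $v \equiv 0$ versus $v > 0$ a.e.

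First I would derive the linear inequality for $v$. Testing the weak subsolution inequality for $w$ and the weak equation for $u$ against an arbitrary non-negative $\varphi \in C_0^\infty(\Omega)$ and subtracting yields
\[
\int_\Omega \langle \nabla_\mathbb{G} v, \nabla_\mathbb{G} \varphi\rangle_{\mathfrak{g}_{1}} \geq \lambda \int_\Omega (u^{-\gamma} - w^{-\gamma})\varphi + \int_\Omega (u^{2^\star_Q - 1} - w^{2^\star_Q - 1})\varphi.
\]
Because $0 \leq w \leq u$ a.e., the critical term on the right is non-negative (by monotonicity of $t \mapsto t^{2^\star_Q - 1}$) and hence can be discarded without reversing the inequality. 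For the singular term I would use the elementary estimate
\[
w^{-\gamma} - u^{-\gamma} = \int_w^u \gamma t^{-\gamma - 1}\,dt \leq \gamma\, w^{-\gamma - 1}(u - w),
\]
which makes sense a.e.\ on $\Omega$ thanks to hypothesis b). Combining these two observations rewrites the inequality as
\[
\int_\Omega \langle \nabla_\mathbb{G} v, \nabla_\mathbb{G} \varphi\rangle_{\mathfrak{g}_{1}} + \int_\Omega c\, v\, \varphi \geq 0, \qquad c(x) := \lambda \gamma\, w(x)^{-\gamma - 1},
\]
valid for every non-negative $\varphi \in C_0^\infty(\Omega)$.

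Next I would observe that, by hypothesis b), one has $c \geq 0$ and $c \in L^\infty_{\mathrm{loc}}(\Omega)$ (though in general not globally, since $w$ may decay to zero at $\partial\Omega$). Thus $v$ is a non-negative weak supersolution of $-\Delta_\mathbb{G} V + cV = 0$ on each open set $\mathcal{O} \Subset \Omega$, with $c \in L^\infty(\mathcal{O})$. The dichotomy then follows by essentially replaying the connection argument in the proof of Proposition \ref{prop:SMP}: if $|\{v = 0\}| > 0$, one locates a $d_\mathbb{G}$-ball $B_{4r_0}(g_0) \Subset \Omega$ on which $\inf_{B_{r_0}(g_0)} v = 0$; the Weak Harnack inequality of Proposition \ref{prop:WHarnack} applied on that ball (where $c$ is genuinely bounded) forces $v \equiv 0$ a.e.\ on $B_{3r_0}(g_0)$; the set $S$ of points admitting such a vanishing neighbourhood is then non-empty, open, and closed, so equals $\Omega$ by connectedness.

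The main technical obstacle I anticipate is the discrepancy between the global $L^\infty(\Omega)$ assumption in Proposition \ref{prop:SMP} and the merely $L^\infty_{\mathrm{loc}}$ information available for $c$. This is however only a formality: Proposition \ref{prop:WHarnack} is local in character, and the Moser iteration underlying it only requires $c$ to be bounded on each ball $B_{4r}(g_0) \Subset \Omega$ entering the argument. No other new idea beyond those of Proposition \ref{prop:SMP} is required.
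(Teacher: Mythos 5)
Your proposal is correct and follows essentially the same route as the paper: subtract the two weak formulations, discard the critical term by monotonicity, control $u^{-\gamma}-w^{-\gamma}$ via the mean--value estimate together with hypothesis~b), and then invoke the Weak Harnack/Strong Maximum Principle machinery. The only cosmetic difference is that the paper, on each fixed $\mathcal{O}\Subset\Omega$, further replaces $c=\lambda\gamma\,w^{-\gamma-1}$ by the constant $\lambda\gamma\,C(\mathcal{O},w)^{-\gamma-1}$ and applies Proposition~\ref{prop:SMP} directly on $\mathcal{O}$ (then invokes the arbitrariness of $\mathcal{O}$), whereas you keep $c$ as an $L^\infty_{\mathrm{loc}}(\Omega)$ coefficient and re-run the connectedness argument globally --- both are legitimate precisely because, as you note, the Weak Harnack inequality of Proposition~\ref{prop:WHarnack} is local.
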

\begin{proof}
	We limit ourselves to consider only the case when $w$ is a \emph{weak subsolution} of 
	pro\-blem \eqref{eq:Main_Problem}$_\lambda$, since the case when $w$ is a
	weak supersolution is analogous.
	
	To begin with, we arbitrarily fix a bounded open set $\Oo\Subset\Omega$ and we observe that, since $w$ 
	is a weak subsolution of problem \eqref{eq:Main_Problem}$_\lambda$ and since $u$
	is a weak solution of the same problem, we have the following computations:
	\begin{align*}
		-\Delta_{\mathbb{G}} (u-w) & \geq \lambda(u^{-\gamma}-w^{-\gamma})
		+(u^{2^{\star}_{Q}-1}-w^{2^{\star}_{Q}-1}) \\
		& (\text{since $w\leq u$, see assumption a)}) \\
		& \geq \lambda({u}^{-\gamma}-w^{-\gamma}) \\
		& (\text{by the Mean Value Theorem, for some $\theta\in(0,1)$}) \\
		& = -\gamma\lambda(\theta {u}+(1-\theta)w)^{-\gamma-1}(u-w) \\
		& \geq -\gamma\lambda w^{-\gamma-1}(u-w) \\
		& (\text{by assumption b)}) \\
		& \geq -\gamma\lambda C^{-\gamma-1}(u-w),
	\end{align*}
	in the weak sense on $\Oo$
	(here, $C > 0$ is a constant depending on $\Oo$ and on $w$). 
	
	As a consequence of this fact,
	and since $w\leq u$ a.e.\,in $\Omega$, we are then entitled
	to apply the Strong Maximum Principle in Proposition \ref{prop:SMP} to the
	function $v = u-w$ (with $c \equiv \gamma\lambda C^{-\gamma-1}t$),
	obtaining that
	$$\text{either $v\equiv 0$ or $v > 0$ a.e.\,in $\Oo$}.$$
	Due to the arbitrariness of $\Oo\Subset\Omega$, this completes the proof.
\end{proof}
\begin{rmk} \label{rem:assumptionbnonserve}
	We explicitly observe that, if $w\in S^{1}_{0}(\Omega)$ is a weak \emph{supersolution}
	of problem \eqref{eq:Main_Problem}$_\lambda$, it follows from Remark \ref{rem:defweaksolPb}-3) that
	assumption b) in Lemma \ref{lem:SMPsoprasotto} is \emph{always satisfied}. 
	Hence, if $u\in S^{1}_{0}(\Omega)$ is a weak \emph{solution} of \eqref{eq:Main_Problem}$_\lambda$, we get
	$$(\text{$u\leq w$ a.e.\,in $\Omega$})\,\,\Longrightarrow\,\,(
	\text{either $u\equiv w$ or $u < w$ a.e.\,in $\Omega$}).$$
\end{rmk}

We now turn to establish a crucial Perron-type lemma which extends \cite[Lemma 2.2]{Haitao} to the case of Carnot groups.
\begin{lemma}\label{lem:2.2Haitao}
	Let $\underline{u}, \overline{u}\in S^{1}_{0}(\Omega)$ be a weak subsolution and a weak supersolution, respectively, of problem \eqref{eq:Main_Problem}$_\lambda$. 
	We assume that 
	\begin{itemize}
		\item[a)] $\underline{u}(g) \leq \overline{u}(g)$ for a.e.\,$g\in \Omega$;
		\item[b)] for every open set $\Oo\Subset\Omega$ there exists
		$C = C(\Oo,\underline{u}) > 0$ such that
		$$\text{$\underline{u}\geq C$ a.e.\,in $\Oo$}.$$
	\end{itemize}
	Then, there exists a weak solution $u \in S^{1}_{0}(\Omega)$ of \eqref{eq:Main_Problem} such that 
	$$\text{$\underline{u}(g) \leq u(g) \leq \overline{u}(g)$ for a.e. $g \in \Omega$}.$$
	\begin{proof}
		We adapt to our setting the proof of \cite[Lemma 2.2]{Haitao}.
		We consider the set 
		\begin{equation*}
			M := \left\{ u \in S^{1}_{0}(\Omega): \underline{u} \leq u \leq \overline{u} \textrm{ a.e. in } \Omega \right\},
		\end{equation*}
		\noindent which is closed and convex.\\
		\indent \textsc{Step 1:} we claim that there exists a relative minimizer $u_{\lambda}$ of $I_{\lambda}$ on $M$.\\
		\noindent It is enough to show that $I_{\lambda}$ is w.l.s.c.\,on $M$. To this aim, let $u_j \in M$ be weakly convergent to $u$ in $S^{1}_{0}(\Omega)$. Without loss of generality, possibly passing to a subsequence, we may assume that $u_j \to u$ pointwise a.e. in $\Omega$, so that $u\in M$.
		Thanks to the continuous embedding of $S^{1}_{0}(\Omega)$, we have that
		\begin{equation*}
			\int_{\Omega}\overline{u}^{2^{\star}_{Q}}  < +\infty \quad \int_{\Omega}\overline{u}^{1-\gamma}  < +\infty,
		\end{equation*}
		\noindent where in the latter we used first H\"{o}lder inequality.
		Hence, by dominated convergence, we have that, as $j\to +\infty$,
		\begin{equation*}
			\|u_j\|^{2^{\star}_{Q}}_{L^{2^{\star}_{Q}}(\Omega)} \to \|u\|^{2^{\star}_{Q}}_{L^{2^{\star}_{Q}}(\Omega)} \quad \textrm{and} \quad \int_{\Omega}|u_j|^{1-\gamma}  \to \int_{\Omega}|u|^{1-\gamma}.
		\end{equation*}
		Therefore,
		$$\liminf_{j \to +\infty}I_{\lambda}(u_j) \geq I_{\lambda}(u),$$
		\noindent as desired.\\
		\indent {\sc Step 2:} we prove that $u_\lambda$ is a weak solution of \eqref{eq:Main_Problem}$_\lambda$. Just in this paragraph
		we will use more compact notation $u$ instead  of $u_\lambda$.\\
		\noindent We take $\varphi \in S^{1}_{0}(\Omega)$ and $\varepsilon>0$. We define the function 
		$$v_{\varepsilon}:= u + \varepsilon \varphi - \varphi^{\varepsilon} + \varphi{_\varepsilon} \in M,$$
		\noindent where 
		$$\varphi^{\varepsilon} := (u + \varepsilon \varphi - \overline{u})_{+} \qquad \varphi_{\varepsilon} := (u + \varepsilon \varphi - \underline{u})_{-}.$$
		Since $u + t(v_{\varepsilon}-u)\in M$ for $t\in (0,1)$, we have that
		\begin{equation}\label{eq:2.9Haitao}
			\begin{aligned}
				0& \leq \lim_{t \to 0^+}\dfrac{I_{\lambda}(u+t(v_{\varepsilon}-u)) - I_{\lambda}(u)}{t} \\
				&= \int_{\Omega}\langle \nabla_{\mathbb{G}}u, \nabla_{\mathbb{G}}(v_{\varepsilon}-u)\rangle_{\mathfrak{g}_{1}} - \lambda \int_{\Omega}\dfrac{v_{\varepsilon}-u}{u^{\gamma}} - \int_{\Omega}u^{2^{\star}_{Q}-1}(v_{\varepsilon}-u).
			\end{aligned}
		\end{equation}
		We omit the details concerning the second integral, we refer to \cite{Haitao} for the details.\\
		Set now
		\begin{equation*}
			E^{\varepsilon}:= \int_{\Omega} \langle \nabla_{\mathbb{G}}u, \nabla_{\mathbb{G}}\varphi^{\varepsilon}\rangle_{\mathfrak{g}_{1}} - \lambda \int_{\Omega}\dfrac{\varphi^{\varepsilon}}{u^{\gamma}}- \int_{\Omega}u^{2^{\star}_{Q}-1}\varphi^{\varepsilon},
		\end{equation*}
		\noindent and 
		\begin{equation*}
			E_{\varepsilon}:= \int_{\Omega} \langle \nabla_{\mathbb{G}}u, \nabla_{\mathbb{G}}\varphi_{\varepsilon}\rangle_{\mathfrak{g}_{1}}  - \lambda \int_{\Omega}\dfrac{\varphi_{\varepsilon}}{u^{\gamma}} - \int_{\Omega}u^{2^{\star}_{Q}-1}\varphi_{\varepsilon}.
		\end{equation*}
		With this notation at hand, we can write \eqref{eq:2.9Haitao} as 
		\begin{equation*}
			\int_{\Omega} \langle \nabla_{\mathbb{G}}u, \nabla_{\mathbb{G}}\varphi \rangle_{\mathfrak{g}_{1}}  - \lambda \int_{\Omega}\dfrac{\varphi}{u^{\gamma}} - \int_{\Omega}u^{2^{\star}_{Q}-1}\varphi \geq \dfrac{E^{\varepsilon} - E_{\varepsilon}}{\varepsilon}.
		\end{equation*}
		We want to show that
		$$\dfrac{E^{\varepsilon}}{\varepsilon} \geq o(1) \quad \textrm{and }  \quad \dfrac{E_{\varepsilon}}{\varepsilon} \leq o(1), \quad \textrm{as } \varepsilon \to 0^+.$$
		We will show only the first one, being the second very similar.
		Firstly, we define the sets
		\begin{align*}
			\Omega^{\varepsilon}&:= \left\{ g \in \Omega: u(g) +\varepsilon\varphi(g) \geq \overline{u}(g) > u(g)\right\},\\
			\mathcal{C}\Omega^{\varepsilon}&:= \{g \in \Omega: u(g)+\varepsilon\varphi(x)<\overline{u}(g)\}.
		\end{align*}
		\noindent
		 and notice that, being $u$ and $\overline{u}$ measurable functions,
		$$|\Omega^{\varepsilon}| \to 0 \quad  \textrm{as } \varepsilon \to 0^+.$$
		Indeed, $\bigcap_{\varepsilon>0}\Omega^\varepsilon=\varnothing$, and 
		as $|\Omega|<+\infty$, this implies
		\[
		\lim_{\varepsilon\to0^+}|\Omega^\varepsilon|=\left|\bigcap_{\varepsilon>0}\Omega^\varepsilon\right|=0.
		\]

		\noindent Therefore, following \cite{Haitao}, we have that
		\begin{equation}\label{eq:Pre2.12Haitao}
			\begin{aligned}
				\dfrac{E^{\varepsilon}}{\varepsilon} &= \dfrac{1}{\varepsilon}\left( 
				\int_{\Omega}\langle \nabla_{\mathbb{G}}(u-\overline{u}),\nabla_{\mathbb{G}}\varphi^{\varepsilon}\rangle_{\mathfrak{g}_{1}} +  
				\int_{\Omega}\langle\nabla_{\mathbb{G}}\overline{u},\nabla_{\mathbb{G}}\varphi^{\varepsilon}\rangle_{\mathfrak{g}_{1}} -  \int_{\Omega}(\lambda u^{-\gamma}+u^{2^{\star}_{Q}-1})\varphi^{\varepsilon} \right)\\
				&\geq \dfrac{1}{\varepsilon} \int_{\Omega} \langle\nabla_{\mathbb{G}}(u-\overline{u}),\nabla_{\mathbb{G}}\varphi^{\varepsilon}\rangle_{\mathfrak{g}_{1}} + \dfrac{1}{\varepsilon}\int_{\Omega^{\varepsilon}}(\lambda \overline{u}^{-\gamma} + \overline{u}^{2^{\star}_{Q}-1} - \lambda u^{-\gamma} - u^{2^{\star}_{Q}-1} )\varphi^{\varepsilon} \\
				&\geq \dfrac{1}{\varepsilon} \int_{\Omega} \langle\nabla_{\mathbb{G}}(u-\overline{u}),\nabla_{\mathbb{G}}\varphi^{\varepsilon}\rangle_{\mathfrak{g}_{1}} - \dfrac{\lambda}{\varepsilon} \int_{\Omega^{\varepsilon}}|\overline{u}^{-\gamma}-u^{-\gamma}||\varphi|.
			\end{aligned}
		\end{equation}
		Now, owing to \eqref{eq:Grad_and_level_set} and since $\varphi^{\varepsilon}=0$ on $\mathcal{C}\Omega^{\varepsilon}$, we have that
		\begin{equation}\label{eq:LocalHaitao}
			\begin{aligned}
				\dfrac{1}{\varepsilon} &\int_{\Omega} \langle\nabla_{\mathbb{G}}(u-\overline{u}),\nabla_{\mathbb{G}}\varphi^{\varepsilon}\rangle_{\mathfrak{g}_{1}} = \dfrac{1}{\varepsilon} \int_{\Omega^{\varepsilon}} \langle\nabla_{\mathbb{G}}(u-\overline{u}),\nabla_{\mathbb{G}}\varphi^{\varepsilon}\rangle_{\mathfrak{g}_{1}}\\
				&=\dfrac{1}{\varepsilon}\int_{\Omega^{\varepsilon}}|\nabla(u-\overline{u})|_{\mathbb{G}}^2 + \int_{\Omega^{\varepsilon}}\langle \nabla_{\mathbb{G}}(u-\overline{u}),\nabla_{\mathbb{G}}\varphi \rangle_{\mathfrak{g}_{1}} \\
				&\geq \int_{\Omega^{\varepsilon}}\langle \nabla_{\mathbb{G}}(u-\overline{u}),\nabla_{\mathbb{G}}\varphi \rangle_{\mathfrak{g}_{1}}  = o(1), \qquad \textrm{as } \varepsilon \to 0^{+}.
			\end{aligned}
		\end{equation}
		Combining \eqref{eq:Pre2.12Haitao} and  \eqref{eq:LocalHaitao}, we finally get
		\begin{equation}\label{eq:2.12Haitao}
			\dfrac{E^{\varepsilon}}{\varepsilon} \geq o(1), \quad \textrm{as } \varepsilon \to 0^+.
		\end{equation}
		Similarly
		\begin{equation}\label{eq:2.13Haitao}
			\dfrac{E_{\varepsilon}}{\varepsilon} \leq o(1), \quad \textrm{as } \varepsilon \to 0^+.
		\end{equation}
		Combining \eqref{eq:2.12Haitao} and \eqref{eq:2.13Haitao}, we get that
		$$\int_{\Omega}\langle \nabla_{\mathbb{G}}u, \nabla_{\mathbb{G}}\varphi\rangle_{\mathfrak{g}_{1}} - \int_{\Omega}(\lambda u^{-\gamma}- u^{2^{\star}_{Q}-1})\varphi \geq o(1) \quad \textrm{as } \varepsilon\to 0^+.$$
		Taking now $-\varphi$, and passing to the limit as $\varepsilon\to 0^+$, one closes the proof.
	\end{proof}
\end{lemma}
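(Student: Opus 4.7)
I would follow a constrained variational scheme, minimizing the energy functional $I_\lambda$ introduced in \eqref{eq:FunctionalIlambda} over the closed convex set
\[
M = \{u\in S^1_0(\Omega)\,:\,\underline{u}\leq u\leq \overline{u}\ \text{a.e.\,in}\ \Omega\},
\]
and then upgrading the constrained minimizer $u_\lambda$ to a bona fide weak solution of \eqref{eq:Main_Problem}$_\lambda$ by testing with perturbations that have been ``projected'' back into $M$.

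\textbf{Existence of the constrained minimizer.} The set $M$ is nonempty (it contains $\underline{u}$), closed, and convex. The pointwise bound $|u|\leq |\underline{u}|+|\overline{u}|$ on $M$, together with the Sobolev embedding $S_0^1(\Omega)\hookrightarrow L^{2^\star_Q}(\Omega)$, shows that both $\int_\Omega |u|^{1-\gamma}$ and $\int_\Omega |u|^{2^\star_Q}$ are uniformly bounded on $M$; hence $I_\lambda$ is coercive on $M$. Weak lower semicontinuity of the Dirichlet term combined with dominated convergence for the other two terms (majorized by $(|\underline{u}|+|\overline{u}|)^{2^\star_Q}\in L^1(\Omega)$) produces a minimizer $u_\lambda\in M$ via the direct method.

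\textbf{Upgrading to a weak solution.} For $\varphi\in S^1_0(\Omega)$ and $\varepsilon>0$, a generic perturbation $u_\lambda+\varepsilon\varphi$ will typically leave $M$, so I would project it back by defining
\[
\varphi^\varepsilon := (u_\lambda+\varepsilon\varphi-\overline{u})_+,\qquad
\varphi_\varepsilon := (u_\lambda+\varepsilon\varphi-\underline{u})_-,\qquad
v_\varepsilon := u_\lambda+\varepsilon\varphi-\varphi^\varepsilon+\varphi_\varepsilon\in M.
\]
Since $u_\lambda+t(v_\varepsilon-u_\lambda)\in M$ for $t\in[0,1]$, minimality gives $\langle I_\lambda'(u_\lambda),v_\varepsilon-u_\lambda\rangle\geq 0$. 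Splitting along the three summands, the piece $\varepsilon\varphi$ contributes exactly $\varepsilon$ times the quantity we want to control, so the task reduces to proving $E^\varepsilon/\varepsilon\geq o(1)$ and $E_\varepsilon/\varepsilon\leq o(1)$, where $E^\varepsilon$ and $E_\varepsilon$ denote the $I_\lambda'$-action on $\varphi^\varepsilon$ and $\varphi_\varepsilon$ respectively. For $E^\varepsilon$ I would insert $\pm\nabla_\mathbb{G}\overline{u}$ in the gradient term, use the supersolution property of $\overline{u}$ tested against the admissible $\varphi^\varepsilon\geq 0$, and observe that on the set $\Omega^\varepsilon=\{u_\lambda+\varepsilon\varphi\geq\overline{u}>u_\lambda\}$ one has $\nabla_\mathbb{G}\varphi^\varepsilon=\nabla_\mathbb{G}(u_\lambda-\overline{u})+\varepsilon\nabla_\mathbb{G}\varphi$, so the leading quadratic term $\tfrac{1}{\varepsilon}\int_{\Omega^\varepsilon}|\nabla_\mathbb{G}(u_\lambda-\overline{u})|^2$ carries the favorable sign and the remaining pieces are $o(1)$ because $|\Omega^\varepsilon|\to 0$. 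The argument for $E_\varepsilon$ is symmetric and uses the subsolution property of $\underline{u}$. Applying the same inequality to $-\varphi$ (and letting $\varepsilon\to 0^+$) flips the sign and produces \eqref{eq:weak_sub_super_sol} as an equality for all $\varphi\in S^1_0(\Omega)$.

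\textbf{Main obstacle.} The delicate point, and the reason assumption b) is essential, is to make sense of and control the singular contributions $\tfrac{1}{\varepsilon}\int_{\Omega^\varepsilon} u_\lambda^{-\gamma}\varphi^\varepsilon$ and $\tfrac{1}{\varepsilon}\int |\overline{u}^{-\gamma}-u_\lambda^{-\gamma}||\varphi|$ along the cut-off perturbation. Since $\varphi^\varepsilon$ is not compactly supported inside $\Omega$, I cannot directly invoke the local integrability of $u_\lambda^{-\gamma}$; the remedy is that on the sets $\Omega^\varepsilon$ of interest, $\varphi$ is nontrivial and both $u_\lambda\geq \underline{u}\geq C(\Oo)>0$ and $\overline{u}\geq u_\lambda \geq C(\Oo)>0$ on each $\Oo\Subset\Omega$ by assumption b), yielding the uniform bound $u_\lambda^{-\gamma}+\overline{u}^{-\gamma}\leq 2C(\Oo)^{-\gamma}$ on the relevant region, after which dominated convergence on the vanishing-measure set $\Omega^\varepsilon$ closes the estimate.
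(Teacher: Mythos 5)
Your proposal reproduces the paper's proof essentially verbatim: minimize $I_\lambda$ over the closed convex set $M$ by the direct method (your coercivity remark just makes explicit what the paper leaves implicit), then upgrade the constrained minimizer to a weak solution by testing against the projected perturbation $v_\varepsilon = u_\lambda + \varepsilon\varphi - \varphi^\varepsilon + \varphi_\varepsilon$ and showing $E^\varepsilon/\varepsilon \geq o(1)$, $E_\varepsilon/\varepsilon \leq o(1)$ via the super-/subsolution properties of $\overline{u},\underline{u}$ and the vanishing measure of $\Omega^\varepsilon$. The one slight imprecision in your ``main obstacle'' paragraph --- assumption b) yields a lower bound on $\underline{u}$ only on sets $\mathcal{O}\Subset\Omega$, so one must take $\varphi$ compactly supported for the bound to apply on $\Omega^\varepsilon\subseteq\mathrm{supp}(\varphi)$ --- is shared with the paper's own exposition and is harmless, since Definition \ref{def:weak_sub_super_sol} only tests weak solutions against $\varphi\in C_0^\infty(\Omega)$.
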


We are now ready to prove the existence of a weak solution of \eqref{eq:Main_Problem}$_\lambda$. 
It is an adaptation to our setting of \cite[Lemma 2.3]{Haitao}.
\begin{lemma}\label{lem:2.3Haitao}
	Problem \eqref{eq:Main_Problem}$_\lambda$ admits \emph{(}at least\emph{)}
	one weak solution $u_\lambda\in S^{1}_{0}(\Omega)$ for every $\lambda \in (0,\Lambda]$.
\end{lemma}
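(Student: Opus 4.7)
The plan is to split the argument into the two cases $\lambda \in (0,\Lambda)$ and $\lambda = \Lambda$, both relying on the Perron-type Lemma~\ref{lem:2.2Haitao} applied with subsolution $\underline{u} = w_\lambda$, where $w_\lambda$ denotes the unique weak solution of the purely singular problem \eqref{eq:Singular_Problem} given by Theorem~\ref{thm:Singular_Problem}. Since $w_\lambda > 0$ a.e.\ in $\Omega$ and $-\Delta_\mathbb{G} w_\lambda = \lambda w_\lambda^{-\gamma}\geq 0$ in the weak sense, Corollary~\ref{cor:BrezisNirenbergpernoi} guarantees $w_\lambda \geq c(\mathcal{O}) > 0$ a.e.\ on every $\mathcal{O}\Subset\Omega$, so hypothesis~b) of Lemma~\ref{lem:2.2Haitao} is automatic.

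For $\lambda\in(0,\Lambda)$, by the very definition of the supremum \eqref{eq:DefinitionLambda} there exists $\lambda'\in(\lambda,\Lambda)$ such that \eqref{eq:Main_Problem}$_{\lambda'}$ admits a weak solution $u_{\lambda'}$. Since $\lambda<\lambda'$, the function $u_{\lambda'}$ is a weak supersolution of \eqref{eq:Main_Problem}$_\lambda$. The ordering $w_\lambda\leq u_{\lambda'}$ follows from a standard comparison: testing the difference of the two equations against $(w_\lambda-u_{\lambda'})_+\in S_0^1(\Omega)$ and noting that on $\{w_\lambda>u_{\lambda'}\}$ one has $\lambda w_\lambda^{-\gamma}<\lambda' u_{\lambda'}^{-\gamma}$ (because $\lambda<\lambda'$ and $t\mapsto t^{-\gamma}$ is decreasing), together with the non-negativity of $u_{\lambda'}^{2^\star_Q-1}$, forces $(w_\lambda-u_{\lambda'})_+\equiv 0$. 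Lemma~\ref{lem:2.2Haitao} then delivers a weak solution $u_\lambda$ with $w_\lambda\leq u_\lambda\leq u_{\lambda'}$.

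For $\lambda=\Lambda$, pick a strictly increasing sequence $\lambda_n\uparrow\Lambda$; the previous case yields solutions $u_n$ of \eqref{eq:Main_Problem}$_{\lambda_n}$, chosen as minimizers of $I_{\lambda_n}$ on the admissible set $M_n$ appearing in Step~1 of the proof of Lemma~\ref{lem:2.2Haitao}. Since $w_{\lambda_n}\in M_n$, one has $I_{\lambda_n}(u_n)\leq I_{\lambda_n}(w_{\lambda_n})\leq J_{\lambda_n}(w_{\lambda_n})<0$ by Theorem~\ref{thm:Singular_Problem}. Testing \eqref{eq:Main_Problem}$_{\lambda_n}$ with $u_n$ itself yields
$$\|u_n\|_{S_0^1(\Omega)}^2 = \lambda_n\int_\Omega u_n^{1-\gamma}+\int_\Omega u_n^{2^\star_Q};$$
combining this identity with $I_{\lambda_n}(u_n)\leq 0$ cancels the critical contribution and, via H\"older's and Sobolev's inequalities, produces a uniform estimate of the form $\|u_n\|_{S_0^1(\Omega)}^2\leq C\bigl(1+\|u_n\|_{S_0^1(\Omega)}^{1-\gamma}\bigr)$. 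Since $1-\gamma<2$, the sequence $\{u_n\}$ is bounded in $S_0^1(\Omega)$, and Remark~\ref{rem:ConvergenceS01} supplies a candidate limit $u_\Lambda\in S_0^1(\Omega)$ along a subsequence.

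The hardest step is the passage to the limit in the weak formulation. The Dirichlet term and the critical term $u_n^{2^\star_Q-1}\varphi$ converge directly thanks to Remark~\ref{rem:ConvergenceS01}, whereas the singular term requires a strictly positive uniform lower bound on $u_n$ on the support of each $\varphi\in C_0^\infty(\Omega)$. This is produced by the monotonicity $\lambda\mapsto w_\lambda$ (proved by the same comparison test-function argument as in the first case) together with the inequality $u_n\geq w_{\lambda_n}$ supplied by Lemma~\ref{lem:2.2Haitao}: one has $u_n\geq w_{\lambda_n}\geq w_{\lambda_1}$ a.e.\ in $\Omega$ for every $n\geq 1$, and $w_{\lambda_1}\geq c(\mathcal{O})>0$ on each $\mathcal{O}\Subset\Omega$ by Corollary~\ref{cor:BrezisNirenbergpernoi}. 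Dominated convergence then yields $\lambda_n u_n^{-\gamma}\varphi\to\Lambda u_\Lambda^{-\gamma}\varphi$ in $L^1(\Omega)$, and the same uniform positivity transfers to $u_\Lambda$, securing condition~(i) of Definition~\ref{def:weak_sub_super_sol} and identifying $u_\Lambda$ as a weak solution of \eqref{eq:Main_Problem}$_\Lambda$.
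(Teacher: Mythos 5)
Your proof is correct and follows essentially the same strategy as the paper's: for $\lambda\in(0,\Lambda)$, it uses $w_\lambda$ (from Theorem~\ref{thm:Singular_Problem}) as subsolution and $u_{\lambda'}$ (for some $\lambda<\lambda'<\Lambda$) as supersolution, verifies the ordering by a comparison argument, and invokes the Perron-type Lemma~\ref{lem:2.2Haitao}; for $\lambda=\Lambda$, it passes to the limit along an increasing sequence $\lambda_n\uparrow\Lambda$ using the uniform $S_0^1$-bound produced by combining $I_{\lambda_n}(u_n)<0$ with the test identity, and the uniform lower bound $u_n\geq w_{\lambda_1}$ to control the singular term. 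The only technical deviation is in the comparison $w_\lambda\leq u_{\lambda'}$: the paper tests both equations against the smoothly truncated function $\theta_\varepsilon(w_\lambda-u_{\lambda'})$ and lets $\varepsilon\to 0^+$, whereas you test directly with $(w_\lambda-u_{\lambda'})_+\in S_0^1(\Omega)$; this is legitimate by Remark~\ref{rem:defweaksolPb}-2), so both routes reach the same conclusion.
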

\begin{proof}
	The idea of the proof is rather standard: we want to construct both a weak subsolution and a weak supersolution and then apply Lemma \ref{lem:2.2Haitao}. 
	
	Let us start with the weak subsolution. 
	By Theorem \ref{thm:Singular_Problem}, we know that for every $\lambda \in (0,\Lambda)$ (and actually for all $\lambda >0$) there exists a unique solution $w_{\lambda}$ of \eqref{eq:Singular_Problem}, which is the Euler-Lagrange equation naturally associated with the functional $J_{\lambda}$ defined in \eqref{eq:def_Functional_Singular}.	The function $w_{\lambda}$ is a weak subsolution of \eqref{eq:Main_Problem}$_\lambda$. 
	
	Let us now look for a weak supersolution. By the very definition of $\Lambda$, we know that there necessarily exists $\lambda' \in (\lambda, \Lambda)$ such that \eqref{eq:Main_Problem}$_{\lambda'}$ admits a weak solution $u_{\lambda'}$, and this can be easily taken as a weak supersolution of \eqref{eq:Main_Problem}$_{\lambda}$.\\
	We now claim that
	\begin{equation}\label{eq:Claim}
		w_{\lambda}(g) \leq u_{\lambda'}(g), \quad \textrm{for a.e. } g\in \Omega.
	\end{equation}
	To this aim, we proceed essentially as in the proof
	of Theorem \ref{thm:Singular_Problem}, \textsc{Step V).} First of all, let us consider a smooth non-decreasing function $\theta: \mathbb{R}\to \mathbb{R}$ such that
	$$\theta(t) =1 \, \textrm{ for } t \geq 1 \quad \textrm{ and } \quad \theta(t) = 0 \, \textrm{ for } t \leq 0,$$
	\noindent and it is linked in a smooth way for $t \in (0,1)$. We further define the function
	$$\theta_{\varepsilon}(t):= \theta \left(\dfrac{t}{\varepsilon}\right), \quad \varepsilon>0, \, t\in \mathbb{R}.$$
	Due to its definition, we are entitled to use the function $\theta_{\varepsilon}(w_{\lambda}- u_{\lambda'})$ as a test function in both \eqref{eq:Main_Problem}$_{\lambda'}$ (solved by $u_{\lambda'}$) and \eqref{eq:Singular_Problem} (solved by $w_{\lambda}$). Thus, we have
	\begin{equation}\label{eq:solvedBywlambda}
			\int_{\Omega}\langle\nabla_{\mathbb{G}} w_{\lambda}, \nabla_{\mathbb{G}}(w_{\lambda}-u_{\lambda'})\rangle_{\mathfrak{g}_{1}} \, \theta'_{\varepsilon}(w_{\lambda}-u_{\lambda'})
			- \lambda \int_{\Omega}\dfrac{\theta_{\varepsilon}(w_{\lambda}-u_{\lambda'})}{w_{\lambda}^{\gamma}} = 0,
	\end{equation}
	\noindent and
	\begin{equation}\label{eq:solvedByuOverlinelambda}
		\begin{aligned}
			&\int_{\Omega}\langle\nabla_{\mathbb{G}} u_{\lambda'}, \nabla_{\mathbb{G}}(w_{\lambda}-u_{\lambda'})\rangle_{\mathfrak{g}_{1}}\, \theta'_{\varepsilon}(w_{\lambda}-u_{\lambda'})\\
			& \qquad - \lambda' \int_{\Omega}\dfrac{\theta_{\varepsilon}(w_{\lambda}-u_{\lambda'})}{u_{\lambda'}^{\gamma}} - \int_{\Omega}u_{\lambda'}^{2^{\star}_{Q}-1}\theta_{\varepsilon}(w_{\lambda}-u_{\lambda'}) = 0.
		\end{aligned}
	\end{equation}
	Subtracting \eqref{eq:solvedBywlambda} from  \eqref{eq:solvedByuOverlinelambda} we get
	\begin{equation}\label{eq:Subtraction}
		\begin{aligned}
			0 &\geq  - \int_{\Omega}|\nabla(u_{\lambda'}-w_{\lambda})|_{\mathbb{G}}^2 \, \theta'_{\varepsilon}(w_{\lambda}-u_{\lambda'})\\
			&= \int_{\Omega}\left( \dfrac{\lambda'}{u_{\lambda'}^{\gamma}} - \dfrac{\lambda}{w_{\lambda}^{\gamma}} + u_{\lambda'}^{2^{*}-1}\right) \theta_{\varepsilon}(w_{\lambda}-u_{\lambda'})\\
			&\geq \lambda \int_{\Omega}\left( \dfrac{1}{u_{\lambda'}^{\gamma}}- \dfrac{1}{w_{\lambda}^{\gamma}}\right)\theta_{\varepsilon}(w_{\lambda}-u_{\lambda'}).
		\end{aligned}
	\end{equation}
	Now, letting $\varepsilon \to 0^+$ we find that
	$$\int_{\{w_{\lambda}> u_{\lambda'}\}}\left( \dfrac{1}{u_{\lambda'}^{\gamma}}- \dfrac{1}{w_{\lambda}^{\gamma}}\right)\leq 0,$$
	\noindent and this implies that
	$$\left|\left\{ g \in \Omega: w_{\lambda}(g) > u_{\lambda'}(g)\right\}\right| = 0,$$
	\noindent as claimed in \eqref{eq:Claim}. \vspace*{0.1cm}
	
	With \eqref{eq:Claim} at hand, we are ready to complete the proof
	of the lemma: in fact, setting  $\overline{u}= u_{\lambda'}$ and $\underline{u} = w_{\lambda}$, 
	by \eqref{eq:Claim} and Theorem \ref{thm:Singular_Problem} we know that
	\vspace*{0.1cm}
	
	i)\,\,$\underline{u}$ is weak subsolution and $\overline{u}$ is a weak supersolution
	of problem \eqref{eq:Main_Problem}$_\lambda$;
	\vspace*{0.05cm}
	
	ii)\,\,$\underline{u}$ and $\overline{u}$ satisfy assumptions a)-b) in Lemma \ref{lem:2.2Haitao}.
	\vspace*{0.1cm}
	
	\noindent We can then apply Lemma \ref{lem:2.2Haitao}, which therefore proving that problem \eqref{eq:Main_Problem}$_{\lambda}$ admits a weak solution $u_{\lambda}$ for every $\lambda \in (0,\Lambda)$,
	further satisfying
	$$I_{\lambda}(u_\lambda) = \min\{u\in S_0^1(\Omega):\,w_{\lambda}\leq u\leq u_{\lambda'}\}
\leq I_{\lambda}(w_{\lambda}).$$
In particular,
by Theorem \ref{thm:Singular_Problem} we have
\begin{equation} \label{eq:Ilambdaulambdaneg}
 I_{\lambda}(u_{\lambda}) \leq I_{\lambda}(w_{\lambda}) \leq J_{\lambda}(w_{\lambda}) <0.
 \end{equation}
	We now turn to consider the `limit case' $\lambda = \Lambda$. The proof
	in this case is analogous to that in
	\cite[Lemma 2.3]{Haitao}, but we present it here for the sake of completeness.
	
	To begin with, we choose an increasing sequence $\{\lambda_k\}_k\subseteq(0,\Lambda)$ 
	which converges to $\Lambda$ as $k\to+\infty$; accordingly, we let
	$$u_k = u_{\lambda_k}\in S^{1}_{0}(\Omega)$$
	be the weak solution of problem \eqref{eq:Main_Problem}$_{\lambda_k}$ constructed
	above (via the Perron method). On account of 
	\eqref{eq:Ilambdaulambdaneg}, for every $k\geq 1$ we have
	\begin{equation} \label{eq:Ilambdakneg}
		I_{\lambda_k,}(u_{k}) 
		= \dfrac{1}{2} \int_{\Omega}|\nabla_{\mathbb{G}}u_k|^2 - \dfrac{\lambda_k}{1-\gamma}\int_{\Omega}|u_k|^{1-\gamma} - \dfrac{1}{2^{\star}_{Q}}\int_{\Omega}|u_k|^{2^{\star}_{Q}} <0.
	\end{equation}
	Moreover, by using $\varphi = u_k$ in \eqref{eq:weak_sub_super_sol} (recall that $u_k$
	solves \eqref{eq:Main_Problem}$_{\lambda_k}$), we get
	\begin{equation} \label{eq:testwithukzero}
		\int_{\Omega}|\nabla_{\mathbb{G}}u_k|^2 -\lambda_k\int_\Omega u_k^{1-\gamma}
		-\int_\Omega u_k^{2^{\star}_{Q}} = 0.
	\end{equation}
	By combining \eqref{eq:Ilambdakneg}\,-\,\eqref{eq:testwithukzero},
	it is then easy to recognize that the sequence $\{u_k\}_k$ is \emph{bounded in $S^{1}_{0}(\Omega)$};
	as a consequence, we can find a function
	$$u_{\Lambda}\in S^{1}_{0}(\Omega)$$ 
	such that
	(up to a subsequence and as $k\to+\infty$)
	\begin{itemize}
		\item[a)] $u_k\to u_{\Lambda}$ weakly in $S^{1}_{0}(\Omega)$ and strongly
		in $L^p(\Omega)$ for $1\leq p <2^{\star}_{Q}$;
		\item[b)] $u_k\to u_{\Lambda}$ a.e.\,in $\Omega$.
	\end{itemize}
	We now observe that, since $\lambda_k\geq \lambda_1$ for every $k\geq 1$
	(recall that the sequence $\{\lambda_k\}_k$ is increasing),
	by arguing as above we see that
	$u_{\lambda_k}\geq w_{\lambda_1}$, and thus
	$$u_{\Lambda} > 0\quad\text{a.e.\,in $\Omega$}.$$
	Moreover, since $u_k$ solves problem \eqref{eq:Main_Problem}$_{\lambda_k}$,
	we have
	$$\int_{\Omega}\langle \nabla_{\mathbb{G}}u_k,\nabla_{\mathbb{G}}\varphi\rangle_{\mathfrak{g}_{1}} -\lambda_k\int_\Omega u_k^{-\gamma}\varphi 
	-\int_\Omega u_k^{2^{\star}_{Q}-1}\varphi = 0\quad\text{for every $\varphi\in S^{1}_{0}(\Omega)$}.$$
	As a consequence, by letting $k\to+\infty$ in the above identity with the aid
	of the Lebesgue Dominated Convergence theorem
	(see, e.g., the proof of \cite[Lemma 2.2]{Haitao}
	and take into account Remark \ref{rem:ConvergenceS01}) we
	conclude that $u_{\Lambda}$ satisfies
	$$\int_{\Omega}\langle \nabla_{\mathbb{G}}u_{\Lambda},\nabla_{\mathbb{G}}\varphi\rangle_{\mathfrak{g}_{1}} -\Lambda\int_\Omega u_{\Lambda}^{-\gamma}\varphi
	-\int_\Omega u_{\Lambda}^{2^{\star}_{Q}-1}\varphi = 0\quad\text{for every $\varphi\in S^{1}_{0}(\Omega)$},$$
	and this proves that $u_{\Lambda}$ is a weak solution of
	problem \eqref{eq:Main_Problem}$_{\Lambda}$. 
	
	We explicitly
	point out that the convergence of the 
	$\int_{\Omega}\langle \nabla_{\mathbb{G}}u_k,\nabla_{\mathbb{G}}\varphi\rangle_{\mathfrak{g}_{1}}$ 
	to $\int_{\Omega}\langle \nabla_{\mathbb{G}}u_{\Lambda},\nabla_{\mathbb{G}}\varphi\rangle_{\mathfrak{g}_{1}}$ follows
	from the weak convergence of $u_k$ to $u_{\Lambda}$, since 
	$$v\mapsto \int_{\Omega}\langle \nabla_{\mathbb{G}}v,\nabla_{\mathbb{G}}\varphi\rangle_{\mathfrak{g}_{1}}$$
	is a linear and continuous functional on $S^{1}_{0}(\Omega)$. This closes the proof.
\end{proof}

\begin{lemma} \label{lem:25Haitao}
	Let $\underline{u},\overline{u},u_\lambda
	\in S^{1}_{0}(\Omega)$ be, respectively, the weak subsolution, 
	the we\-ak supersolution and the weak solution of problem \eqref{eq:Main_Problem}$_\lambda$
	obtained in Lemma \ref{lem:2.3Haitao}, and assume that $0<\lambda<\Lambda$.
	Then, $u_\lambda$ is a \emph{local minimizer} of $I_{\lambda}$
	in \eqref{eq:FunctionalIlambda}.
\end{lemma}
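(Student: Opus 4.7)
The plan is to argue by contradiction, exploiting the fact that by construction $u_\lambda$ \emph{minimizes} $I_\lambda$ on the order interval
$$M = \{u\in S^{1}_{0}(\Omega):\,\underline{u}\leq u\leq \overline{u}\text{ a.e.\,in }\Omega\},$$
and showing that any small $S^{1}_{0}(\Omega)$-perturbation of $u_\lambda$ can be ``projected'' onto $M$ via truncation in a way that does not strictly decrease the functional. As a preliminary step I would establish the strict double inequality $\underline{u}<u_\lambda<\overline{u}$ a.e.\,in $\Omega$. By Lemma \ref{lem:SMPsoprasotto} (and Remark \ref{rem:assumptionbnonserve}), either $u_\lambda$ coincides with $\underline u$, $\overline u$, respectively, or it is strictly in between a.e.; equality with $\underline{u}=w_\lambda$ is ruled out by substitution into the two PDEs, which would force $u_\lambda^{2^{\star}_{Q}-1}\equiv 0$, while equality with $\overline{u}=u_{\lambda'}$ would give $(\lambda'-\lambda)u_\lambda^{-\gamma}\equiv 0$, impossible since $\lambda<\lambda'$.

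Suppose now, for contradiction, that $u_\lambda$ is not a local minimizer in $S^{1}_{0}(\Omega)$: then there exists a sequence $\{v_k\}\subset S^{1}_{0}(\Omega)$ with $v_k\to u_\lambda$ in $S^{1}_{0}(\Omega)$ and $I_\lambda(v_k)<I_\lambda(u_\lambda)$, and up to replacing $v_k$ with $|v_k|$ we may assume $v_k\geq 0$. Define the truncation $\tilde v_k=\min\{\overline{u},\max\{\underline{u},v_k\}\}\in M$, so that $v_k=\tilde v_k+\phi^k-\phi_k$, where $\phi^k=(v_k-\overline{u})_+$ has support in $S^k=\{v_k>\overline{u}\}$ and $\phi_k=(\underline{u}-v_k)_+$ has support in $S_k=\{v_k<\underline{u}\}$. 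The strict inequalities $\underline u<u_\lambda<\overline u$, together with $v_k\to u_\lambda$ a.e.\,(up to subsequence), give $|S^k|,|S_k|\to 0$; since $\phi^k,\phi_k\leq|v_k-u_\lambda|$ pointwise a.e.\,and $\{|\nabla_\mathbb{G}v_k|^2\}$ is equi-integrable in $L^1(\Omega)$, this also yields $\|\phi^k\|_{S^{1}_{0}(\Omega)}, \|\phi_k\|_{S^{1}_{0}(\Omega)}\to 0$. Since $\tilde v_k\in M$, we have $I_\lambda(\tilde v_k)\geq I_\lambda(u_\lambda)$, so the whole argument reduces to establishing the quantitative lower bound
\begin{equation} \label{eq:keyestimate25}
 I_\lambda(v_k)-I_\lambda(\tilde v_k)\geq \tfrac{1}{2}\bigl(\|\phi^k\|_{S^{1}_{0}(\Omega)}^2+\|\phi_k\|_{S^{1}_{0}(\Omega)}^2\bigr)-o\bigl(\|\phi^k\|_{S^{1}_{0}(\Omega)}^2+\|\phi_k\|_{S^{1}_{0}(\Omega)}^2\bigr),
\end{equation}
which immediately contradicts $I_\lambda(v_k)<I_\lambda(u_\lambda)\leq I_\lambda(\tilde v_k)$ for $k$ large (the degenerate case $\phi^k\equiv\phi_k\equiv 0$ gives $v_k=\tilde v_k\in M$, so the contradiction is direct).

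To prove \eqref{eq:keyestimate25} I would split $I_\lambda(v_k)-I_\lambda(\tilde v_k)$ into its three natural pieces. A direct expansion of the gradient term, using $\nabla_\mathbb{G}\tilde v_k=\nabla_\mathbb{G}\overline u$ on $S^k$ and $\nabla_\mathbb{G}\tilde v_k=\nabla_\mathbb{G}\underline u$ on $S_k$, yields
$$\tfrac{1}{2}\int|\nabla_\mathbb{G}v_k|^2-\tfrac{1}{2}\int|\nabla_\mathbb{G}\tilde v_k|^2=\int\langle\nabla_\mathbb{G}\overline u,\nabla_\mathbb{G}\phi^k\rangle_{\mathfrak{g}_{1}}-\int\langle\nabla_\mathbb{G}\underline u,\nabla_\mathbb{G}\phi_k\rangle_{\mathfrak{g}_{1}}+\tfrac{1}{2}(\|\phi^k\|_{S^{1}_{0}}^2+\|\phi_k\|_{S^{1}_{0}}^2);$$
testing the supersolution inequality for $\overline u$ against $\phi^k\geq 0$ and the subsolution inequality for $\underline u$ against $\phi_k\geq 0$ (the extension to $S^{1}_{0}$-test functions being legitimate by Remark \ref{rem:defweaksolPb}\,-\,2), this expression is bounded below by
$$\lambda\int\overline u^{-\gamma}\phi^k+\int\overline u^{2^{\star}_{Q}-1}\phi^k-\lambda\int\underline u^{-\gamma}\phi_k-\int\underline u^{2^{\star}_{Q}-1}\phi_k+\tfrac{1}{2}(\|\phi^k\|_{S^{1}_{0}}^2+\|\phi_k\|_{S^{1}_{0}}^2).$$
The singular contribution is handled by pointwise concavity of $t\mapsto t^{1-\gamma}$: the inequality $v_k^{1-\gamma}-\tilde v_k^{1-\gamma}\leq(1-\gamma)\tilde v_k^{-\gamma}(v_k-\tilde v_k)$, after integration and rescaling by $-\lambda/(1-\gamma)$, contributes exactly $-\lambda\int\overline u^{-\gamma}\phi^k+\lambda\int\underline u^{-\gamma}\phi_k$, cancelling the singular linear terms above. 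The critical contribution is the subtle one: since convexity of $t\mapsto t^{2^{\star}_{Q}}$ produces a linear bound in the wrong direction, I would use Taylor's formula with integral remainder to get pointwise $-\tfrac{1}{2^{\star}_{Q}}(v_k^{2^{\star}_{Q}}-\tilde v_k^{2^{\star}_{Q}})=-\tilde v_k^{2^{\star}_{Q}-1}(v_k-\tilde v_k)-R_k$, with $R_k\geq 0$ bounded by $C(\max\{v_k,\tilde v_k\})^{2^{\star}_{Q}-2}(v_k-\tilde v_k)^2$; the linear piece cancels the remaining two terms, while $\int R_k$ is estimated by H\"older's inequality with conjugate exponents $2^{\star}_{Q}/(2^{\star}_{Q}-2)$ and $2^{\star}_{Q}/2$, the Sobolev embedding, and the fact that $\int_{S^k}(\overline u+\phi^k)^{2^{\star}_{Q}},\,\int_{S_k}\underline u^{2^{\star}_{Q}}\to 0$ (by equi-integrability and $|S^k|,|S_k|\to 0$), yielding $\int R_k=o(\|\phi^k\|_{S^{1}_{0}}^2+\|\phi_k\|_{S^{1}_{0}}^2)$.

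The main obstacle is precisely the control of this critical remainder: the criticality of $2^{\star}_{Q}$ blocks compactness of the Sobolev embedding into $L^{2^{\star}_{Q}}$, so the vanishing of $|S^k|,|S_k|$ alone would not suffice. What rescues the argument is the strong convergence $v_k\to u_\lambda$ in $L^{2^{\star}_{Q}}(\Omega)$, a consequence of the continuity of the Sobolev embedding applied to the strong $S^{1}_{0}$-convergence of $\{v_k\}$, which forces the relevant $L^{2^{\star}_{Q}}$-mass on the vanishing sets $S^k,S_k$ to tend to zero. Summing the three estimates, the linear-in-$(\phi^k,\phi_k)$ pieces cancel pairwise, leaving exactly the quadratic leading term $\tfrac{1}{2}(\|\phi^k\|_{S^{1}_{0}}^2+\|\phi_k\|_{S^{1}_{0}}^2)$ of \eqref{eq:keyestimate25} plus a negligible remainder, and the contradiction follows.
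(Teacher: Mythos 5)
Your proposal is correct and follows essentially the same strategy as the paper's proof: project the non-minimizing sequence onto the order interval $M$, exploit the super/subsolution inequalities to handle the linear contributions, discard the singular remainder by its favorable sign, and absorb the critical quadratic remainder via H\"older, Sobolev, and the vanishing of the excess-set measures. The only real divergence is in how $|S^k|,|S_k|\to 0$ is obtained: you derive it from the pointwise a.e.\ strict inequality $\underline{u}<u_\lambda<\overline{u}$ (via Lemma \ref{lem:SMPsoprasotto}) together with a.e.\ convergence and dominated convergence applied to $\mathbf{1}_{S^k}$, whereas the paper extracts a uniform strict separation $\overline{u}-u_\lambda\geq C_1>0$ on compact subsets $\Omega_\delta$ via Corollary \ref{cor:BrezisNirenbergpernoi} and then uses the strong $L^2$ convergence $u_j\to u_\lambda$; both routes are sound, yours being marginally more elementary at that step. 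One presentational caveat: the justification of $\|\phi^k\|_{S_0^1(\Omega)}\to 0$ via ``equi-integrability of $|\nabla_\mathbb{G}v_k|^2$'' is better phrased as $\|\phi^k\|^2_{S_0^1(\Omega)}\leq 2\|v_k-u_\lambda\|^2_{S_0^1(\Omega)}+2\int_{S^k}|\nabla_\mathbb{G}(u_\lambda-\overline{u})|^2$, where the second term vanishes by absolute continuity of the integral since $|S^k|\to0$ -- which is exactly the paper's computation.
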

\begin{proof}
	By contradiction, suppose
	that $u_\lambda$ \emph{is not} a local minimizer for $I_{\lambda}$. Then,
	we can construct a sequence $\{u_j\}_j\subseteq S^{1}_{0}(\Omega)$
	satisfying the following properties:
	\vspace*{0.1cm}
	
	i)\,\,$u_j\to u_\lambda$ in $S^{1}_{0}(\Omega)$ as $j\to+\infty$;
	\vspace*{0.05cm}
	
	ii)\,\,$I_{\lambda}(u_j) < I_{\lambda}(u_\lambda)$ for every $j\in\mathbb{N}$.
	\vspace*{0.1cm}
	
	\noindent We explicitly observe that, by possibly replacing $u_j$ with $z_j = |u_j|$, we may assume that
	$u_j\geq 0$ a.e.\,in $\Omega$ for every $j\geq 1$. 
	In fact, since $u_j\to u_\lambda$ in $S^{1}_{0}(\Omega)$
	and since $u_\lambda > 0$ almost everywhere in $\Omega$, it is easy to recognize that
	$$\text{$|u_j|\to |u_\lambda| = u_\lambda$ in $S^{1}_{0}(\Omega)$ as $j\to+\infty$},$$
	and this shows that property i) is still satisfied by $\{z_j\}_j$. Moreover, we have
	$$I_{\lambda}(|u_j|) = I_{\lambda}(u_j) < I_{\lambda}(u_\lambda)\quad\text{for every $j\geq 1$},$$
	and this shows that also property ii) is still satisfied by the sequence $\{z_j\}_j$.
	Hence, from now on we tacitly understand that $\{u_j\}_j$ is a sequence of \emph{non-negative functions}
	satisfying properties i)-ii) above.
	Accordingly, we set 
	$$v_j := 
	\max\{\underline{u},\min\{\overline{u},u_j\}\}\in S^{1}_{0}(\Omega)$$ 
	and we define
	\begin{itemize}
		\item[$(\ast)$] $\overline{w}_j = (u_j-\overline{u})_+\in S^{1}_{0}(\Omega)_+(\Omega)$ 
		and $\overline{S}_j = \mathrm{supp}(\overline{w}_j)
		= \{u_j\geq \overline{u}\}$;
		\vspace*{0.1cm}
		
		\item[$(\ast)$] $\underline{w}_j = (u_j-\underline{u})_-\in S^{1}_{0}(\Omega)_+(\Omega)$ 
		and $\underline{S}_j = \mathrm{supp}(\underline{w}_j)
		= \{u_j\leq \underline{u}\}$.
	\end{itemize}
	We explicitly observe that, by definition, the following identities hold:
	\begin{equation} \label{eq:indetitiesujvjwj}
		\begin{split}
			\mathrm{a)}&\,\,v_j\in M = \{u\in S^{1}_{0}(\Omega):\,\underline{u}\leq u\leq\overline{u}\}; \\
			\mathrm{b)}&\,\,\text{$v_j \equiv \overline{u}$ on $\overline{S}_j$,
				$v_j \equiv \underline{u}$ on $\underline{S}_j$ and $v_j \equiv u_j$ on 
				$\{\underline{u}<u_j<\overline{u}\}$}; \\
			\mathrm{c)}&\,\,\text{$u_j = \overline{u}+\overline{w}_j$ on $\overline{S}_j$ and
				$u_j = \underline{u}-\underline{w}_j$ on $\underline{S}_j$}.
		\end{split}
	\end{equation}
	Following \cite{Haitao}, we now claim that
	\begin{equation} \label{eq:claimmeasureSn}
		\lim_{n\to+\infty}|\overline{S}_j| = \lim_{n\to+\infty}|\underline{S}_j| = 0.
	\end{equation}
	Indeed, let $\sigma > 0$ be arbitrary and let $\delta > 0$ be such that
	$|\Omega\setminus\Omega_\delta| < \frac{\sigma}{2}$, where we have set
	$\Omega_\delta = \{g\in\Omega:\,d_{\mathbb{G}}(g,\partial\Omega) > \delta\}\Subset\Omega$.
	Since, by construction, 
	$$\underline{u} = w_\lambda\in S^{1}_{0}(\Omega)$$ is the unique solution
	of problem \eqref{eq:Singular_Problem}, 
	we know from Theorem \ref{thm:Singular_Problem} that 
	\begin{equation} \label{eq:underustaccata}
		\text{$u_\lambda\geq \underline{u}\geq C > 0$ a.e.\,in $\Omega_\delta$},
	\end{equation}
	where $C = C(\delta,\underline{u}) > 0$ is a suitable constant 
	(recall that $\underline{u}\leq u_\lambda\leq \overline{u}$).
	\vspace*{0.05cm}
	
	On the other hand, since $u_\lambda$ is a weak solution of problem \eqref{eq:Main_Problem}$_\lambda$,
	and since
	$\overline{u} = u_{\lambda'}$ for some $\lambda<\lambda'<\Lambda$
	(see the proof of Lemma \ref{lem:2.3Haitao}),
	by \eqref{eq:underustaccata} we have 
	\begin{align*}
		-\Delta_{\mathbb{G}} (\overline{u}-u_\lambda) & = \lambda'\overline{u}^{-\gamma}-\lambda u_\lambda^{-\gamma}
		+(\overline{u}^{2^{\star}_{Q}-1}-u_\lambda^{2^{\star}_{Q}-1}) \\
		& (\text{since, by construction, $\underline{u}\leq u_\lambda\leq \overline{u}$
			and $\lambda < \lambda'$}) \\
		& \geq \lambda(\overline{u}^{-\gamma}-u_\lambda^{-\gamma}) \\
		& (\text{by the Mean Value Theorem, for some $\theta\in(0,1)$}) \\
		& = -\gamma\lambda(\theta\overline{u}+(1-\theta)u_\lambda)^{-\gamma-1}(\overline{u}-u_\lambda) \\
		& \geq -\gamma\lambda \underline{u}^{-\gamma-1}(\overline{u}-u_\lambda) \\
		& (\text{here we use \eqref{eq:underustaccata}}) \\
		& \geq -\gamma\lambda C^{-\gamma-1}(\overline{u}-u_\lambda),
	\end{align*}
	in the weak sense on $\Omega_\delta$; as a consequence, we see that $v := \overline{u}-u_\lambda
	\in S^{1}_{0}(\Omega)$ is a \emph{weak supersolution} (in the sense of
	Definition \ref{def:weak_sub_super_sol}) of equation \eqref{eq:PDEOrderzeroHarnack}, with
	$$c(x) = \gamma\lambda C(\delta,\overline{u})^{-\gamma-1} > 0.$$
	Since $v > 0$ a.e.\,on every ball $B\Subset\Omega$ (as $\overline{u} = u_{\lambda'}$ and 
	$\lambda\neq\lambda'$), we can apply again Co\-rol\-la\-ry~\ref{cor:BrezisNirenbergpernoi},
	ensuring the existence of $C_1 = C_1(\delta,u_\lambda,\overline{u}) > 0$ such that
	\begin{equation} \label{eq:vstaccatafinal}
		\text{$v = \overline{u}-u_\lambda \geq C_1 > 0$ a.e.\,in $\Omega_\delta$}.
	\end{equation}
	With \eqref{eq:vstaccatafinal} at hand, we can finally complete the proof
	of \eqref{eq:claimmeasureSn}: in fact, recalling that
	$u_j\to u_\lambda$ in $S^{1}_{0}(\Omega)\hookrightarrow L^2(\Omega)$
	as $j\to+\infty$, from \eqref{eq:vstaccatafinal} we obtain
	\begin{align*}
		|\overline{S}_j| & \leq |\Omega\setminus\Omega_\delta|+|\Omega_\delta\cap\overline{S}_j|
		< \frac{\sigma}{2}+\frac{1}{C_1^2}\int_{\Omega_\delta\cap\overline{S}_j}(\overline{u}-u_\lambda)^2 \\
		& (\text{since $0\leq \overline{u}-u_\lambda\leq u_j-u_\lambda$ a.e.\,in $\overline{S}_j$}) \\
		& < \frac{\sigma}{2}+\frac{1}{C_1^2}\|u_j-u_\lambda\|^2_{L^2(\Omega)} < \sigma,
	\end{align*}
	provided that $j$ is large enough, and this proves that $|\overline{S}_j|\to 0$ as $j\to+\infty$.
	In a very similar fashion, one can prove that $|\underline{S}_j|\to 0$ as $j\to+\infty$.
	\vspace*{0.1cm}
	
	Now we have established \eqref{eq:claimmeasureSn}, we can proceed toward the end of the demonstration
	of the lemma.
	To begin with, using identities b)-c) in \eqref{eq:indetitiesujvjwj} we write
	\begin{align*}
		I_{\lambda}(u_j) & = I_{\lambda}(v_j)
		+ \frac{1}{2}\left(\int_{\Omega}|\nabla_{\mathbb{G}}u_j|_{\mathbb{G}}^2-\int_{\Omega}|\nabla_{\mathbb{G}}v_j|_{\mathbb{G}}^2\right) \\
		&\qquad-\frac{\lambda}{1-\gamma}\int_\Omega(|u_j|^{1-\gamma}-|v_j|^{1-\gamma}) 
		-\frac{1}{2^{\star}_{Q}}\int_\Omega(|u_j|^{2^{\star}_{Q}}-|v_j|^{2^{\star}_{Q}}) \\
		& = I_{\lambda}(v_j)
		+ \frac{1}{2}\int_{\overline{S}_j\cup\underline{S}_j}(|\nabla_{\mathbb{G}} u_j|_{\mathbb{G}}^2-|\nabla_{\mathbb{G}} v_j|_{\mathbb{G}}^2)\\
		&\qquad-\frac{\lambda}{1-\gamma}\int_{\overline{S}_j\cup\underline{S}_j}
		(|u_j|^{1-\gamma}-|v_j|^{1-\gamma}) 
		-\frac{1}{2^{\star}_{Q}}\int_{\overline{S}_j\cup\underline{S}_j}(|u_j|^{2^{\star}_{Q}}-|v_j|^{2^{\star}_{Q}}) \\
		& = I_{\lambda}(v_j) +
		\mathcal{R}^{(1)}_j+\mathcal{R}^{(2)}_j = (\bigstar), \phantom{+\int_{\underline{S}_j\cup\overline{S}_j}}
	\end{align*}
	where we have introduced the shorthand notation
	\begin{align*}
		(\ast)&\,\,\mathcal{R}^{(1)}_j = \frac{1}{2}\int_{\overline{S}_j}
		\big(|\nabla_{\mathbb{G}} (\overline{u}+\overline{w}_j)|_{\mathbb{G}}^2-|\nabla_{\mathbb{G}} \overline{u}|_{\mathbb{G}}^2\big) \\
		&\qquad - \int_{\overline{S}_j}
		\Big\{\frac{\lambda}{1-\gamma}(|\overline{u}+\overline{w}_j|^{1-\gamma}-|\overline{u}|^{1-\gamma})
		+\frac{1}{2^{\star}_{Q}}(|\overline{u}+\overline{w}_j|^{2^{\star}_{Q}}-|\overline{u}|^{2^{\star}_{Q}})\Big\}; \\[0.15cm]
		(\ast)&\,\,\mathcal{R}^{(2)}_j = \frac{1}{2}\int_{\underline{S}_j}
		\big(|\nabla_{\mathbb{G}} (\underline{u}-\underline{w}_j)|_{\mathbb{G}}^2-|\nabla_{\mathbb{G}} \underline{u}|_{\mathbb{G}}^2\big) \\
		& \qquad - \int_{\underline{S}_j}
		\Big\{\frac{\lambda}{1-\gamma}(|\underline{u}-\underline{w}_j|^{1-\gamma}-|\underline{u}|^{1-\gamma})
		+\frac{1}{2^{\star}_{Q}}(|\underline{u}-\underline{w}_j|^{2^{\star}_{Q}}-|\underline{u}|^{2^{\star}_{Q}})\Big\},
	\end{align*}
	\noindent and then we obtain
	$$I_{\lambda}(u_j) = I_{\lambda}(v_j)+A_j+B_j,$$
	where we have used the notation
	\begin{align*}
		(\ast)&\,\,A_j = \frac{1}{2}\int_{\Omega}|\nabla_{\mathbb{G}}\overline{w}_j|_{\mathbb{G}}^2+
		\int_{\Omega}\langle \nabla_{\mathbb{G}}\overline{u},\nabla_{\mathbb{G}}\overline{w}_j\rangle_{\mathfrak{g}_{1}} 
		\\
		& \qquad - \int_{\overline{S}_j}
		\Big\{\frac{\lambda}{1-\gamma}(|\overline{u}+\overline{w}_j|^{1-\gamma}-|\overline{u}|^{1-\gamma})
		+\frac{1}{2^{\star}_{Q}}(|\overline{u}+\overline{w}_j|^{2^{\star}_{Q}}-|\overline{u}|^{2^{\star}_{Q}})\Big\};
		\\[0.15cm]
		(\ast)&\,\,B_j  = \frac{1}{2}\int_{\Omega}|\nabla_{\mathbb{G}}\underline{w}_j|_{\mathbb{G}}^2+
		\int_{\Omega}\langle \nabla_{\mathbb{G}}\underline{u},\nabla_{\mathbb{G}}\underline{w}_j\rangle_{\mathfrak{g}_{1}} 
		\\
		& \qquad - \int_{\underline{S}_j}
		\Big\{\frac{\lambda}{1-\gamma}(|\underline{u}-\underline{w}_j|^{1-\gamma}-|\underline{u}|^{1-\gamma})
		+\frac{1}{2^{\star}_{Q}}(|\underline{u}-\underline{w}_j|^{2^{\star}_{Q}}-|\underline{u}|^{2^{\star}_{Q}})\Big\}.
	\end{align*}
	Now, since we have already recognized that
	$v_j\in M$
	and since, by con\-stru\-ction, we know that $I_{\lambda}(u_\lambda) = \inf_M I_{\lambda}$ 
	(see the proof of Lemma \ref{lem:2.3Haitao}), we get
	\begin{equation} \label{eq:dovecontraddire}
		I_{\lambda}(u_j)\geq I_{\lambda}(u_\lambda)+A_j+B_j.
	\end{equation}
	On the other hand, since $\overline{u}=u_{\lambda'}$ is a \emph{weak supersolution}
	of \eqref{eq:Main_Problem}$_\lambda$, we have
	\begin{align*}
		A_j & = \frac{1}{2}\int_{\Omega}|\nabla_{\mathbb{G}}\overline{w}_j|_{\mathbb{G}}^2+
		\int_{\Omega}\langle \nabla_{\mathbb{G}}\overline{u},\nabla_{\mathbb{G}}\overline{w}_j\rangle_{\mathfrak{g}_{1}} 
		\\
		& \qquad - \int_{\overline{S}_j}
		\Big\{\frac{\lambda}{1-\gamma}(|\overline{u}+\overline{w}_j|^{1-\gamma}-|\overline{u}|^{1-\gamma})
		+\frac{1}{2^{\star}_{Q}}(|\overline{u}+\overline{w}_j|^{2^{\star}_{Q}}-|\overline{u}|^{2^{\star}_{Q}})\Big\} \\
		& (\text{by the Mean Value Theorem, for some $\theta\in (0,1)$}) \\
		& \geq \frac{1}{2}\int_{\Omega}|\nabla_{\mathbb{G}}\overline{w}_j|_{\mathbb{G}}^2
		+ \int_{\overline{S}_j}(\lambda \overline{u}^{-\gamma}+\overline{u}^{2^{\star}_{Q}-1})\overline{w}_j\,dx
		\\
		&\qquad -\int_{\overline{S}_j}\big\{\lambda(\overline{u}+\theta\overline{w}_j)^{-\gamma}
		\overline{w}_j+(\overline{u}+\theta\overline{w}_j)^{2^{\star}_{Q}-1}\overline{w}_j\}dx \\
		& \geq  \frac{1}{2}\int_{\Omega}|\nabla_{\mathbb{G}}\overline{w}_j|_{\mathbb{G}}^2-\int_{\overline{S}_j}
		\big((\overline{u}+\theta\overline{w}_j)^{2^{\star}_{Q}-1}-\overline{u}^{2^{\star}_{Q}-1}\big)\overline{w}_j\,dx
		\\
		& (\text{again by the Mean Value Theorem}) \\
		& \geq \frac{1}{2}\int_{\Omega}|\nabla_{\mathbb{G}}\overline{w}_j|_{\mathbb{G}}^2-C\int_{\overline{S}_j}(\overline{u}^{2^{\star}_{Q}-2}+\overline{w}_j^{2^{\star}_{Q}-2})
		\overline{w}_j^2,
	\end{align*}
	where $C > 0$ is a suitable constant only depending on the dimension $n$.
	From this, by exploiting H\"older's and Sobolev's inequalities, we obtain
	\begin{equation} \label{eq:todeduceAngeqzero}
		\begin{split}
			A_j & = \frac{1}{2}\int_{\Omega}|\nabla_{\mathbb{G}}\overline{w}_j|_{\mathbb{G}}^2 -C\int_{\overline{S}_j}(\overline{u}^{2^{\star}_{Q}-2}+\overline{w}_j^{2^{\star}_{Q}-2})
			\overline{w}_j^2 \\
			& \geq 
			\frac{1}{2}\int_{\Omega}|\nabla_{\mathbb{G}}\overline{w}_j|_{\mathbb{G}}^2 \left\{1-\hat{C}\Big(\int_{\overline{S}_j}\overline{u}^{2^{\star}_{Q}}\,dx
			\Big)^{\frac{2^{\star}_{Q}-2}{2^{\star}_{Q}}}-\hat{C}\left(\int_{\Omega}|\nabla_{\mathbb{G}}\overline{w}_j|^2_{\mathbb{G}}\right)^{(2^{\star}_{Q}-2)/2}\right\},
		\end{split}
	\end{equation}
	where $\hat{C} > 0$ is another constant depending on $Q$.
	
	With \eqref{eq:todeduceAngeqzero} at hand, we are finally ready to complete the proof.
	Indeed, taking into account the above \eqref{eq:claimmeasureSn}, we have
	\begin{align*}
		\lim_{n\to+\infty}\Big(\int_{\overline{S}_j}\overline{u}^{2^{\star}_{Q}}\,dx
		\Big)^{\frac{2^{\star}_{Q}-2}{2^{\star}_{Q}}} = 0; 
	\end{align*}
	moreover, since $u_j\to u_\lambda$ in $S^{1}_{0}(\Omega)$ as $j\to+\infty$, one also get
	\begin{align*}
		0& \leq \int_{\Omega}|\nabla_{\mathbb{G}}\overline{w}_j|_{\mathbb{G}}^2
		=\int_{\overline{S}_j}|\nabla_{\mathbb{G}} (u_j-
		\overline{u})|^2 \\
		& \leq 2 \|u_j-u_\lambda\|_{S^1_0(\Omega)}^2+2 \int_{\overline{S}_j}
		|\nabla_{\mathbb{G}} (u_\lambda-\overline{u})|^2  \to 0\qquad\text{as $j\to+\infty$}.
	\end{align*}
	Ga\-the\-ring these facts, we then infer the existence of some $j_0\geq 1$ such that
	$$A_j\geq \frac{1}{2}\int_{\Omega}|\nabla_{\mathbb{G}}\overline{w}_j|_{\mathbb{G}}^2 \left\{1-\hat{C}\Big(\int_{\overline{S}_j}\overline{u}^{2^{\star}_{Q}} 
	\Big)^{\frac{2^{\star}_{Q}-2}{2^{\star}_{Q}}}-\hat{C}\left(\int_{\Omega}|\nabla_{\mathbb{G}}\overline{w}_j|^2_{\mathbb{G}}\right)^{(2^{\star}_{Q}-2)/2}\right\} \geq 0\quad
	\forall\,\,j\geq j_0.$$
	By arguing in a very similar way, one can prove that $B_j\geq 0$ for every $j\geq j_0$
	(by possibly enlarging $j_0$ if needed); as a consequence, from 
	\eqref{eq:dovecontraddire} we get
	$$I_{\lambda}(u_j)\geq I_{\lambda}(u_\lambda) +A_j+B_j\geq I_{\lambda}(u_\lambda),$$
	but this is contradiction with property ii) of the sequence $\{u_j\}_j$.
\end{proof}

\medskip

\section{Existence of the second solution}\label{sec:Second_Solution}

In this section we are going to find 
a second solution to \eqref{eq:Main_Problem}$_\lambda$.
For this purpose, we will apply the Ekeland's variational principle
similarly to what done in \cite{Haitao}.
For any $\lambda<\Lambda$, the set where we develop the method is
\begin{equation}\label{def:tlambda}
H_\lambda:=\left\{u\in S^1_0(\Omega):\ u\geq u_\lambda\ \m{a.e.~in }\Omega\right\}
\end{equation}
where $u_\lambda$ is the (first, weak) solution to \eqref{eq:Main_Problem}$_\lambda$
determined in Lemma \ref{lem:2.3Haitao}. 

We know by Lemma \ref{lem:25Haitao}
that $r_0>0$ exists such that $r_0<\|u_\lambda\|_{S^1_0(\Omega)}$ and
$I_\lambda(u_\lambda)\leq I_\lambda(u)$ 
for any $u\in S^1_0(\Omega)$ such that $\|u-u_0\|_{S^1_0(\Omega)}\leq r_0$.
Therefore, we are in one of the following two cases
\begin{enumerate}
	\item for every $r\in (0,r_0)$,
	\[
	\inf_{\| u-u_\lambda\|_{S^1_0(\Omega)}=r}I_\lambda(u)=I_\lambda(u_\lambda)
	\]
	\item there exists $r_1\in (0,r_0)$ such that
	\[
	\inf_{\|u-u_\lambda\|_{S^1_0(\Omega)}=r_1}I_\lambda(u)>I_\lambda(u_\lambda).
	\]
\end{enumerate}
We treat the two cases separately.
\subsection{First case} \label{subsec:CaseI}
We are going to prove that for any $r\in (0,r_0)$ there exists
a solution $v_\lambda$ of \eqref{eq:Main_Problem}$_\lambda$ such that
$\|v_\lambda-u_\lambda\|_{S^1_0(\Omega)}=r$, therefore in particular $v_\lambda\not\equiv u_\lambda$.

By hypothesis,
we can find a sequence $\{u_k\}_k\subset H_{\lambda}$ satisfying the following properties:
\begin{itemize}
	\item $\|u_k-u_\lambda\|_{S^1_0(\Omega)} = r$ for every $k\geq 1$;	
	\item $I_\lambda(u_k)\to I_\lambda(u_\lambda) =:\mathbf{c}_\lambda$ as $k\to+\infty$.
\end{itemize}
We then choose $\bar r > 0$ so small that $r-\bar r > 0$ and $r+\bar r<r_0$ and,
accordingly, we consider the subset of $H_\lambda$ defined as follows:
$$X_\lambda = \{u\in H_\lambda:\,r-\bar r\leq\|u-u_\lambda\|_{S^1_0(\Omega)}\leq r+\bar r\}.$$
By construction $u_k\in X_\lambda$ for every $k\geq 1$.
Since it is \emph{closed}, this set $X_\lambda$
is a \emph{complete metric space} when endowed with the distance induced by $\|\cdot\|_{S^1_0(\Omega)}$;
moreover, since $I_\lambda$ is a \emph{real-valued and continuous functional} on $X_\lambda$, and since
$$\textstyle\inf_{X_\lambda}I_\lambda = I_\lambda(u_\lambda)$$
we are entitled to apply
 Ekeland's Variational Principle \cite{Ekeland} to the functional $I_\lambda$ on $X_\lambda$,
provi\-ding us with a sequence $\{v_k\}_k\subset X_\lambda$ such that
\begin{equation} \label{eq:EkelandCaseA}
	\begin{split}
		\mathrm{i)}&\,\, I_\lambda(v_k)\leq I_\lambda(u_k)\leq I_\lambda(u_\lambda)+1/k^2, \\
		\mathrm{ii)}&\,\,\|v_k-u_k\|_{S^1_0(\Omega)}\leq 1/k, \\
		\mathrm{iii)}&\,\, I_\lambda(v_k)\leq I_\lambda(u)+1/k\,
		\|v_k-u\|_{S^1_0(\Omega)}\quad\text{for every $u\in X$}.    
	\end{split}
\end{equation}
We now observe that, since $\{v_k\}_k\subset X_\lambda$ and since the set $X_\lambda$ is \emph{bounded}
in $S^1_0(\Omega)$, 
there exists  $v_\lambda\in S^1_0(\Omega)$ such that (as $k\to+\infty$ and 
up to a sub-sequence)
\begin{equation} \label{eq:limitvkCaseA}
	\begin{split}
		\mathrm{i)}&\,\,\text{$v_k\to v_\lambda$ weakly in $S^1_0(\Omega)$}; \\
		\mathrm{ii)}&\,\,\text{$v_k\to v_\lambda$ strongly in $L^p(\Omega)$ for every $1\leq p<2^\star_Q$}; \\
		\mathrm{iii)}&\,\,\text{$v_k\to v_\lambda$ pointwise
			a.e.\,in $\Omega$}.
	\end{split}
\end{equation}
where we have also used the \emph{compact embedding} 
$S^1_0(\Omega)\hookrightarrow L^2(\Omega)$ (see Section \ref{sec:fsandpdes}).

\begin{lemma}\label{lem:casoa1}
	The function $v_\lambda$ is a weak solution of \eqref{eq:Main_Problem}$_\lambda$.
\end{lemma}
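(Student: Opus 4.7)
The plan is to exploit the Ekeland inequality \eqref{eq:EkelandCaseA}-(iii) to derive a quantitative ``almost Euler--Lagrange'' condition for $v_k$ (with error $O(1/k)$) and then pass to the limit $k\to+\infty$. The only obstruction compared to the unconstrained case is that the admissible perturbations must keep the competitor in $X_\lambda\subseteq H_\lambda$ (that is, above $u_\lambda$); this will be handled by the same Perron-type truncation already employed in the proof of Lemma \ref{lem:2.2Haitao}.

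\emph{Step I (positivity).} Since $v_k\geq u_\lambda>0$ a.e.\,in $\Omega$ for every $k$ and $v_k\to v_\lambda$ pointwise a.e., we obtain $v_\lambda\geq u_\lambda>0$ a.e.\,in $\Omega$. In particular $v_\lambda^{-\gamma}\leq u_\lambda^{-\gamma}$ a.e., so $v_\lambda^{-\gamma}\in L^1_{\mathrm{loc}}(\Omega)$ (recall that $u_\lambda$ is itself a weak solution, hence it satisfies (i) of Definition \ref{def:weak_sub_super_sol}).

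\emph{Step II (admissible perturbations and Ekeland).} Fix $\varphi\in C_0^\infty(\Omega)$ and, for $\varepsilon>0$ small, set
\[\eta_k^\varepsilon:=(v_k+\varepsilon\varphi-u_\lambda)^-,\qquad w_k^\varepsilon:=v_k+\varepsilon\varphi+\eta_k^\varepsilon=\max\{v_k+\varepsilon\varphi,\,u_\lambda\}.\]
Then $w_k^\varepsilon\geq u_\lambda$ a.e., so $w_k^\varepsilon\in H_\lambda$, and for $\varepsilon$ small enough (depending on $k$ and $\|\varphi\|_{S^{1}_{0}(\Omega)}$) we have $w_k^\varepsilon\in X_\lambda$. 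Observe that $A_k^\varepsilon:=\{v_k+\varepsilon\varphi<u_\lambda\}\subseteq\{\varphi<0\}$ is the support of $\eta_k^\varepsilon$ and satisfies $|A_k^\varepsilon|\to 0$ as $\varepsilon\to 0^+$ (by a decreasing-intersection argument analogous to the one used for $\Omega^\varepsilon$ in the proof of Lemma \ref{lem:2.2Haitao}). Plugging $u=w_k^\varepsilon$ into \eqref{eq:EkelandCaseA}-(iii), expanding $I_\lambda(w_k^\varepsilon)-I_\lambda(v_k)$ and treating the $\eta_k^\varepsilon$-contribution exactly as in \eqref{eq:Pre2.12Haitao}-\eqref{eq:2.12Haitao} (all the terms localised on $A_k^\varepsilon$ being $o(\varepsilon)$ thanks to $|A_k^\varepsilon|\to 0$), dividing by $\varepsilon$ and letting $\varepsilon\to 0^+$, we get
\[\int_\Omega\langle\nabla_{\mathbb{G}}v_k,\nabla_{\mathbb{G}}\varphi\rangle_{\mathfrak{g}_{1}}-\lambda\int_\Omega v_k^{-\gamma}\varphi-\int_\Omega v_k^{2^{\star}_{Q}-1}\varphi\;\geq\;-\tfrac{1}{k}\|\varphi\|_{S^{1}_{0}(\Omega)}.\]
Repeating the same argument with $-\varphi$ in place of $\varphi$ (the construction $\max\{v_k-\varepsilon\varphi,u_\lambda\}$ is admissible as well) yields the reverse inequality up to the same $O(1/k)$-error, hence
\[\Big|\int_\Omega\langle\nabla_{\mathbb{G}}v_k,\nabla_{\mathbb{G}}\varphi\rangle_{\mathfrak{g}_{1}}-\lambda\int_\Omega v_k^{-\gamma}\varphi-\int_\Omega v_k^{2^{\star}_{Q}-1}\varphi\Big|\;\leq\;\tfrac{1}{k}\|\varphi\|_{S^{1}_{0}(\Omega)}.\]

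\emph{Step III (limit $k\to+\infty$).} The three terms on the left converge to their $v_\lambda$-counterparts: the gradient term by weak convergence in $S^{1}_{0}(\Omega)$, see \eqref{eq:limitvkCaseA}-(i); the critical term by Remark \ref{rem:ConvergenceS01} applied to the bounded sequence $\{v_k\}_k$ (so that $v_k^{2^{\star}_{Q}-1}\rightharpoonup v_\lambda^{2^{\star}_{Q}-1}$ weakly in $L^{p_0}(\Omega)$, and $\varphi\in L^{p_0'}(\Omega)$); the singular term by Lebesgue's dominated convergence with the integrable majorant $u_\lambda^{-\gamma}|\varphi|$. We thus conclude
\[\int_\Omega\langle\nabla_{\mathbb{G}}v_\lambda,\nabla_{\mathbb{G}}\varphi\rangle_{\mathfrak{g}_{1}}-\lambda\int_\Omega v_\lambda^{-\gamma}\varphi-\int_\Omega v_\lambda^{2^{\star}_{Q}-1}\varphi=0\qquad\forall\,\varphi\in C_0^\infty(\Omega),\]
which, combined with Step I, shows that $v_\lambda$ is a weak solution of \eqref{eq:Main_Problem}$_\lambda$ in the sense of Definition \ref{def:weak_sub_super_sol}.

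\emph{Main obstacle.} The only delicate point is the $\varepsilon\to 0^+$ analysis in Step II: one has to verify that every piece of $I_\lambda(w_k^\varepsilon)-I_\lambda(v_k)$ involving the correction $\eta_k^\varepsilon$, and in particular the critical and singular nonlinear contributions on $A_k^\varepsilon$, is of order $o(\varepsilon)$, so that only the linearisation along $\varphi$ survives. This is precisely the kind of estimate already carried out (via \eqref{eq:Grad_and_level_set} and $|A_k^\varepsilon|\to 0$) in the proof of Lemma \ref{lem:2.2Haitao}, and only minor bookkeeping is needed to adapt it to the present setting.
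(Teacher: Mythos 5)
Your general strategy — Ekeland's inequality plus Perron-type truncation plus passage to the limit — is the same as the paper's, but you reverse the order of the two limits $\varepsilon\to 0^+$ and $k\to+\infty$, and this creates a genuine gap in Step~II.

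In the paper, the argument first establishes inequality \eqref{eq:ineq2.21haitao}, namely
\[
\int_\Omega\langle\nabla_{\mathbb{G}}v_k,\nabla_{\mathbb{G}}(w-v_k)\rangle_{\mathfrak{g}_1}-\lambda\int_\Omega v_k^{-\gamma}(w-v_k)-\int_\Omega v_k^{2^\star_Q-1}(w-v_k)\geq -\tfrac{1}{k}\|w-v_k\|_{S_0^1(\Omega)}\quad\forall\,w\in H_\lambda,
\]
plugs in $w=v_k+\varepsilon\varphi+(\varphi_{k,\varepsilon})_-$, and then takes $k\to+\infty$ \emph{with $\varepsilon$ fixed}. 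At that stage the Ekeland error $\tfrac{1}{k}\|\varepsilon\varphi+(\varphi_{k,\varepsilon})_-\|_{S_0^1(\Omega)}$ vanishes because the norm is bounded uniformly in $k$ and only the factor $1/k$ matters. \emph{Only afterwards} does the paper divide by $\varepsilon$ and send $\varepsilon\to 0^+$, at which point the $\tfrac{1}{k}$-error is already gone.

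You instead plug $w_k^\varepsilon=v_k+\varepsilon\varphi+\eta_k^\varepsilon$ into \eqref{eq:EkelandCaseA}-(iii), divide by $\varepsilon$, and send $\varepsilon\to 0^+$ \emph{at fixed $k$}. The Ekeland error term is then $\tfrac{1}{k\varepsilon}\|v_k-w_k^\varepsilon\|_{S_0^1(\Omega)}=\tfrac{1}{k\varepsilon}\|\varepsilon\varphi+\eta_k^\varepsilon\|_{S_0^1(\Omega)}$, and your claimed bound $\geq -\tfrac{1}{k}\|\varphi\|_{S_0^1(\Omega)}$ requires $\|\eta_k^\varepsilon\|_{S_0^1(\Omega)}=o(\varepsilon)$ (or at least $O(\varepsilon)$). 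But
\[
\|\eta_k^\varepsilon\|_{S_0^1(\Omega)}^2=\int_{A_k^\varepsilon}|\nabla_{\mathbb{G}}(v_k-u_\lambda)+\varepsilon\nabla_{\mathbb{G}}\varphi|^2,
\]
and while $|A_k^\varepsilon|\to 0$ forces $\int_{A_k^\varepsilon}|\nabla_{\mathbb{G}}(v_k-u_\lambda)|^2\to 0$, nothing forces this quantity to decay at rate $\varepsilon^2$. The pointwise bound $\eta_k^\varepsilon\leq\varepsilon|\varphi|$ controls the $L^\infty$-size but not the gradient of $\eta_k^\varepsilon$, and the $S_0^1$-norm is the only one entering the Ekeland distance. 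So $\|\eta_k^\varepsilon\|_{S_0^1(\Omega)}/\varepsilon$ can blow up as $\varepsilon\to 0^+$, and the inequality you write at the end of Step~II is not justified.

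A secondary but related imprecision: your parenthetical ``all the terms localised on $A_k^\varepsilon$ being $o(\varepsilon)$ thanks to $|A_k^\varepsilon|\to 0$'' misreads the mechanism in \eqref{eq:Pre2.12Haitao}--\eqref{eq:LocalHaitao}. There the quadratic gradient piece $\tfrac{1}{\varepsilon}\int_{\Omega^\varepsilon}|\nabla_{\mathbb{G}}(u-\overline{u})|^2$ is \emph{not} shown to be $o(1)$; it is \emph{dropped} because it has a definite sign ($\geq 0$) pointing the right way. The singular and critical pieces indeed become $o(1)$ by dominated convergence and the pointwise bound $\eta_k^\varepsilon/\varepsilon\leq|\varphi|$, but the quadratic gradient term and the Ekeland error term are not sign-definite together, so the dominated-convergence argument does not apply to them. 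To repair the proof you would have to revert to the paper's order of limits: first establish \eqref{eq:ineq2.21haitao}, then take $k\to+\infty$ at fixed $\varepsilon$, and only then divide by $\varepsilon$ and use the sign of the quadratic term and the fact that $u_\lambda$ solves \eqref{eq:Main_Problem}$_\lambda$.
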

\proof
We fix $w\in H_\lambda$ and choose $\varepsilon_0=\varepsilon_0(w,\lambda)$
sufficiently small that $v_k+\varepsilon(w-v_k)\in X_\lambda$ for every $0<\varepsilon<\varepsilon_0$.
We point out that such an $\varepsilon_0$ exists (for $k$ sufficiently big)
because by the properties above $\|u_k-u_\lambda\|_{S^1_0(\Omega)}=r$ and
\begin{align*}
 r-\frac{1}{k}\leq \|u_k-u_\lambda\|_{S^1_0(\Omega)}-\|v_k-u_k\|_{S^1_0(\Omega)}&\leq \|v_k-u_\lambda\|_{S^1_0(\Omega)}\\
& \leq \|u_k-u_\lambda\|_{S^1_0(\Omega)}+\|v_k-u_k\|_{S^1_0(\Omega)}\leq r+\frac{1}{k}.
\end{align*}
By setting $u=v_k+\varepsilon(w-v_k)$ in \eqref{eq:EkelandCaseA}
we get
\[
\frac{I_\lambda(v_k+\varepsilon(w-v_k))-I_\lambda(v_k)}{\varepsilon}\geq -\frac{1}{k}\, \|w-v_k\|_{S^1_0(\Omega)}.
\]
Taking the limit for $\varepsilon\to0^+$, we get
\[
\begin{split}
-\frac{1}{k}\,\|w-v_k\|_{S^1_0(\Omega)}\leq \int_\Omega\langle \nabla_\G v_k,&\nabla_\G(w-v_k)\rangle_{\mathfrak{g}_{1}}-\int_\Omega v_k^{2^\star_Q-1}(w-v_k)\\
&-\lambda\lim_{\varepsilon\to0^+} \int_\Omega (v_k+\theta \varepsilon(w-v_k))^{-\gamma}(w-v_k)
\end{split}
\]
with $0<\theta<1$. Observe that $v_k+\varepsilon(w-v_k)\geq u_\lambda$ a.e.~in $\Omega$,
and $w-v_k\in S^1_0(\Omega)$, 
therefore 
\[
\int_\Omega \left| (v_k+\theta\varepsilon(w-v_k))^{-\gamma}(w-v_k)\right|\leq \int_\Omega u_\lambda^{-\gamma}|w-v_k|<\infty.
\]
By the dominated convergence theorem, this implies
\[
\lim_{\varepsilon\to0^+}\int_\Omega(v_k+\theta\varepsilon(w-v_k))^{-\gamma}(w-v_k)=\int_\Omega v_k^{-\gamma}(w-v_k),
\]
and therefore we get the inequality
\begin{equation}\label{eq:ineq2.21haitao}
\begin{split}
	& -\frac{1}{k}\, \|w-v_k\|_{S^1_0(\Omega)}\\
	& \qquad\leq \int_\Omega\langle \nabla_\G v_k,\nabla_\G(w-v_k)\rangle_{\mathfrak{g}_{1}}-\int_\Omega v_k^{2^\star_Q-1}(w-v_k)
	-\lambda \int_\Omega v_k^{-\gamma}(w-v_k)\ \ \forall w\,\,\in H_\lambda.
	\end{split}
	\end{equation}
For any $\varphi\in S^1_0(\Omega)$ and any $\varepsilon>0$ we introduce
\begin{itemize}
	\item $\varphi_{k,\varepsilon}:=v_k+\varepsilon\varphi-u_\lambda$;
	\item $\varphi_{\varepsilon}:=v_\lambda+\varepsilon\varphi-u_\lambda$.
\end{itemize}
By construction $w:=v_k+\varepsilon\varphi+(\varphi_{k,\varepsilon})_-\in H_\lambda$, 
then, by \eqref{eq:ineq2.21haitao},
\begin{equation}\label{eq:ineq2.22haitao}
	\begin{split}
-\frac{1}{k}\, \|\varepsilon\varphi+(\varphi_{k,\varepsilon})_-\|_{S^1_0(\Omega)}\leq \int_\Omega\langle \nabla_\G v_k,&\nabla_\G(\varepsilon\varphi+(\varphi_{k,\varepsilon})_-)\rangle_{\mathfrak{g}_{1}}
-\int_\Omega v_k^{2^\star_Q-1}(\varepsilon\varphi+(\varphi_{k,\varepsilon})_-)\\
&-\lambda\int_\Omega v_k^{-\gamma}(\varepsilon\varphi+(\varphi_{k,\varepsilon})_-)
\end{split}
\end{equation}
We aim to pass to the limiti for $k\to+\infty$ and $\varepsilon\to0^+$.
We first observe that, because of~(\ref{eq:limitvkCaseA}-iii),
\[
(\varphi_{\varepsilon,k})_-\to(\varphi_\varepsilon)_-\ \m{pointwise a.e.~in }\Omega,\ \m{for }k\to+\infty.
\]
Since $v_k\geq u_\lambda>0$ a.e.~in $\Omega$ for any $k$, we get the following,
\begin{itemize}
	\vspace{7pt}
	\item $v_k^{2^\star_Q-1}\, (\varphi_{k,\varepsilon})_-=v_k^{2^\star_Q-1}\, (u_\lambda-\varepsilon\varphi-v_k)\cdot \mathbf{1}_{\{u_\lambda-\varepsilon\varphi-v_k\geq0\}}
		\leq(u_\lambda+\varepsilon|\varphi|)^{2^\star_Q}$
		\vspace{7pt}
		\item $v_k^{-\gamma}\, (\varphi_{k,\varepsilon})_-\leq 2\varepsilon \, |\varphi|\, u_\lambda^{-\gamma}$.
		\end{itemize}
By the results above and Remark	\ref{rem:defweaksolPb}-(2), we can use the Dominated
Convergence Theorem, obtaining
\begin{equation}\label{eq:gather1}
	\begin{split}
	\lim_{k\to+\infty}\biggl(\int_\Omega v_k^{2^\star_Q-1}(\varepsilon\varphi+(\varphi_{k,\varepsilon})_-)
	&+\lambda\int_\Omega v_k^{-\gamma}(\varepsilon\varphi+(\varphi_{k,\varepsilon})_-)\biggl)\\
	&=\int_\Omega v_\lambda^{2^\star_Q-1}(\varepsilon\varphi+(\varphi_\varepsilon)_-)+
	\lambda\int_\Omega v_\lambda^{-\gamma}(\varepsilon\varphi+(\varphi_\varepsilon)_-)
	\end{split}
\end{equation}
Focusing on the remaining term on the RHS of \eqref{eq:ineq2.22haitao},
with computations similar to the ones carried out in \cite[Lemma 3.4]{BadTar},
we get
\[
\int_\Omega \langle \nabla_\G v_k,\nabla_\G (\varphi_{k,\varepsilon})_-\rangle_{\mathfrak{g}_{1}}\leq
\int_\Omega\langle \nabla_\G{v_\lambda},\nabla_\G(\varphi_{\varepsilon})_-\rangle_{\mathfrak{g}_{1}} +o(1)\ \m{as}\ k\to+\infty
\]
and as a consequence of the weak convergence $v_k\to v_\lambda$ in $S_0^1(\Omega)$, we obtain
\begin{equation}\label{eq:gather2}
	\int_\Omega\langle \nabla_\G v_k,\nabla_\G(\varepsilon\varphi+(\varphi_{k,\varepsilon})_-)\rangle_{\mathfrak{g}_{1}}
	\leq \int_\Omega\langle \nabla_\G {v_\lambda},\nabla_\G(\varepsilon\varphi+(\varphi_{\varepsilon})_-)\rangle_{\mathfrak{g}_{1}} +o(1)\ \m{as}\ k\to+\infty.
\end{equation}
 Gathering \eqref{eq:gather1} and \eqref{eq:gather2}, and taking into account
that $\|(\varphi_{k,\varepsilon})_-\|_{S^{1}_{0}(\Omega)}$ is \emph{uniformly bo\-un\-ded} with respect to $k$ 
(as the same is true of $v_k$), we can finally
pass to the limit as $k\to+\infty$ in \eqref{eq:ineq2.22haitao}, obtaining
\begin{equation}
	\int_\Omega\langle \nabla_\G v_\lambda,\nabla_\G(\varepsilon\varphi+(\varphi_\varepsilon)_-)\rangle_{\mathfrak{g}_1}\geq
	\int_\Omega v_\lambda^{2^\star_Q-1}(\varepsilon\varphi+(\varphi_\varepsilon)_-)+
	\lambda\int_\Omega v_\lambda^{-\gamma}(\varepsilon\varphi+(\varphi_\varepsilon)_-).
\end{equation}
Exploiting the computations carried out in \cite[Lemma 2.6]{Haitao}, we get
\[
\begin{split}
	\int_\Omega\langle \nabla_\G v_\lambda,\nabla_\G \varphi\rangle_{\mathfrak{g}_{1}} 
	&- \lambda \int_\Omega v_\lambda^{-\gamma}\varphi-\int_\Omega v_\lambda^{2^\star_Q-1}\varphi\\
	&\geq-\frac{1}{\varepsilon} \left(\int_\Omega \langle \nabla_\G v_\lambda,\nabla_\G (\varphi_\varepsilon)_-\rangle_{\mathfrak{g}_{1}}
	-\lambda\int_\Omega v_\lambda^{-\gamma}(\varphi_\varepsilon)_--\int_\Omega v_\lambda^{2^\star_Q-1}(\varphi_\varepsilon)_-\right)\\
	&(\m{since }u_\lambda\m{ is a solution of \eqref{eq:Main_Problem}}_\lambda)\\
	&=-\frac{1}{\varepsilon} \biggl(\int_\Omega \langle \nabla_\G(v_\lambda-u_\lambda),\nabla_\G(\varphi_\varepsilon)_-\rangle_{\mathfrak{g}_{1}}
	-\lambda\int_\Omega(v_\lambda^{-\gamma}-u_\lambda^{-\gamma})(\varphi_\varepsilon)_-\\
	&\ \ \ 	-\int_\Omega(v_\lambda^{2^\star_Q-1}-u_\lambda^{2^\star_Q-1})(\varphi_\varepsilon)_-\biggl)\\
	\vspace{6pt}
	&(\m{since }v_\lambda\geq u_\lambda\m{ a.e.~and }v_\lambda=\lim_{k\to+\infty}v_k)\\
	&\geq -\frac{1}{\varepsilon} 
	\biggl(\int_{\{u_\lambda> v_\lambda+\varepsilon\varphi\}}\langle \nabla_\G(v_\lambda-u_\lambda),\nabla_\G (v_\lambda-u_\lambda+\varepsilon\varphi)\rangle_{\mathfrak{g}_{1}}\\
	&\ \ \ 	-\lambda\int_{\{u_\lambda> v_\lambda+\varepsilon\varphi\}}(v_\lambda^{-\gamma}-u_\lambda^{-\gamma})(v_\lambda-u_\lambda+\varepsilon\varphi)\biggl)\\
	&\geq o(1)\ \ \ \m{as}\ \varepsilon\to0^+,
\end{split}
\]
\noindent where we used \eqref{eq:Grad_and_level_set}
and, as before, that
\[
\bigcap_{\varepsilon>0}\{u_\lambda> v_\lambda+\varepsilon\varphi\}=\varnothing.
\]
This implies that $|\{u_\lambda>v_\lambda+\varepsilon\varphi\}|\to 0$
and therefore, by letting $\varepsilon\to0^+$ in the inequality above, we conclude that
\begin{equation}
	\int_\Omega\langle \nabla_\G v_\lambda,\nabla_\G\varphi\rangle_{\mathfrak{g}_{1}}-\lambda\int_\Omega v_\lambda^{-\gamma}\varphi
	-\int_\Omega v_\lambda^{2^\star_Q-1}\varphi\geq0,
\end{equation}
and as $\varphi \in S^1_0(\Omega)$ is arbitrary, this allows to conclude that $v_\lambda$
is a weak solution of \eqref{eq:Main_Problem}$_\lambda$ as claimed.\fine

\color{black}

It remains to show that $v_\lambda$ and $u_\lambda$ are different solutions.
In order to do that we use the following lemma.

\begin{lemma}\label{lem:casoa2}
	If $v_\lambda$ is the solution introduced above,
	then $\|v_\lambda-u_\lambda\|_{S^{1}_{0}(\Omega)}=r$.
\end{lemma}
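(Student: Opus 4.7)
The plan is to prove $\|v_\lambda-u_\lambda\|_{S^{1}_{0}(\Omega)}=r$ by splitting into the two inequalities. The easy direction follows because $\|u_k-u_\lambda\|_{S^{1}_{0}(\Omega)}=r$ combined with $\|v_k-u_k\|_{S^{1}_{0}(\Omega)}\leq 1/k$ yields $\|v_k-u_\lambda\|_{S^{1}_{0}(\Omega)}\to r$, so weak lower semi-continuity of the Hilbert norm applied to $v_k\rightharpoonup v_\lambda$ gives $\|v_\lambda-u_\lambda\|_{S^{1}_{0}(\Omega)}\leq r$. The reverse inequality will follow at once from the \emph{strong} convergence $v_k\to v_\lambda$ in $S^{1}_{0}(\Omega)$. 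Hence the whole content of the lemma is this strong convergence, which I would argue by contradiction: assume (along a subsequence) $\alpha:=\lim_k\|v_k-v_\lambda\|_{S^{1}_{0}(\Omega)}^2>0$ and set $\beta:=\lim_k\int_\Omega|v_k-v_\lambda|^{2^{\star}_{Q}}$. Thanks to the Br\'ezis--Lieb lemma for the critical term, the strong $L^{1-\gamma}$-convergence $\int_\Omega v_k^{1-\gamma}\to\int_\Omega v_\lambda^{1-\gamma}$ (Remark \ref{rem:ConvergenceS01}), and the fact that $v_\lambda$ is a weak solution of \eqref{eq:Main_Problem}$_\lambda$ (Lemma \ref{lem:casoa1}), the energy splits as
\begin{equation*}
\mathbf{c}_\lambda-I_\lambda(v_\lambda)=\lim_k\bigl(I_\lambda(v_k)-I_\lambda(v_\lambda)\bigr)=\frac{\alpha}{2}-\frac{\beta}{2^{\star}_{Q}}.
\end{equation*}

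The crucial step is to extract the equality $\alpha=\beta$ from the one-sided Ekeland inequality \eqref{eq:ineq2.21haitao} by feeding it two different admissible elements of $H_\lambda$. First, plug $w=v_\lambda\in H_\lambda$ (legitimate since $v_k\geq u_\lambda$ passes to the pointwise a.e.\,limit), subtract the weak formulation of \eqref{eq:Main_Problem}$_\lambda$ for $v_\lambda$ paired with $v_\lambda-v_k$ (admissible by Remark \ref{rem:defweaksolPb} item 2), and discard the non-positive singular contribution $\lambda\int_\Omega(v_\lambda^{-\gamma}-v_k^{-\gamma})(v_\lambda-v_k)\leq 0$ to obtain $\|v_k-v_\lambda\|_{S^{1}_{0}(\Omega)}^2\leq\int_\Omega(v_k^{2^{\star}_{Q}-1}-v_\lambda^{2^{\star}_{Q}-1})(v_k-v_\lambda)+o(1)=\beta+o(1)$, the last equality being a Br\'ezis--Lieb-type identity exploiting the weak $L^{p_0}$-convergence of $v_k^{2^{\star}_{Q}-1}$ recorded in Remark \ref{rem:ConvergenceS01}; this yields $\alpha\leq\beta$. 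Second, plug the dilation $w=(1+\varepsilon)v_k\in H_\lambda$ (valid for any $\varepsilon>0$, since $v_k\geq u_\lambda\geq 0$): the inequality becomes linear in $\varepsilon$ and, upon dividing by $\varepsilon$, reduces to
\begin{equation*}
\|v_k\|_{S^{1}_{0}(\Omega)}^2-\lambda\int_\Omega v_k^{1-\gamma}-\int_\Omega v_k^{2^{\star}_{Q}}\geq-\frac{1}{k}\|v_k\|_{S^{1}_{0}(\Omega)}.
\end{equation*}
Passing to the limit via Br\'ezis--Lieb for $\|v_k\|^2$ and $\int_\Omega v_k^{2^{\star}_{Q}}$ and using the identity $\|v_\lambda\|_{S^{1}_{0}(\Omega)}^2=\lambda\int_\Omega v_\lambda^{1-\gamma}+\int_\Omega v_\lambda^{2^{\star}_{Q}}$ (which follows from Lemma \ref{lem:casoa1} by testing with $v_\lambda$), one deduces $\alpha\geq\beta$, and hence $\alpha=\beta$.

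To conclude, the Sobolev inequality $\|u\|_{L^{2^{\star}_{Q}}(\Omega)}^2\leq S_{\mathbb{G}}^{-1}\|u\|_{S^{1}_{0}(\Omega)}^2$ yields $\beta\leq S_{\mathbb{G}}^{-2^{\star}_{Q}/2}\alpha^{2^{\star}_{Q}/2}$, so $\alpha=\beta>0$ forces $\alpha\geq S_{\mathbb{G}}^{Q/2}$, and then
\begin{equation*}
\mathbf{c}_\lambda-I_\lambda(v_\lambda)=\frac{\alpha}{2}-\frac{\alpha}{2^{\star}_{Q}}=\frac{\alpha}{Q}\geq\frac{S_{\mathbb{G}}^{Q/2}}{Q}>0.
\end{equation*}
On the other hand, $\|v_\lambda-u_\lambda\|_{S^{1}_{0}(\Omega)}\leq r<r_0$ together with the local minimality of $u_\lambda$ (Lemma \ref{lem:25Haitao}) give $I_\lambda(v_\lambda)\geq I_\lambda(u_\lambda)=\mathbf{c}_\lambda$, i.e., $\mathbf{c}_\lambda-I_\lambda(v_\lambda)\leq 0$, a contradiction. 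Therefore $\alpha=0$, strong convergence holds, and $\|v_\lambda-u_\lambda\|_{S^{1}_{0}(\Omega)}=r$ as claimed. The main obstacle is recognising the simple dilation $w=(1+\varepsilon)v_k$ as the right probe for upgrading the one-sided Ekeland inequality to the full equality $\alpha=\beta$, together with the careful verification of the Br\'ezis--Lieb-type identity for the sub-elliptic critical term $\int_\Omega(v_k^{2^{\star}_{Q}-1}-v_\lambda^{2^{\star}_{Q}-1})(v_k-v_\lambda)$.
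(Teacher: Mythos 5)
Your proposal is correct and follows the same backbone as the paper's proof: deduce strong convergence $v_k\to v_\lambda$ in $S^1_0(\Omega)$ from the Ekeland inequality \eqref{eq:ineq2.21haitao}, the Br\'ezis--Lieb identities, and the local minimality of $u_\lambda$, then read off $\|v_\lambda-u_\lambda\|_{S^1_0(\Omega)}=r$ from the pinching $r-1/k\leq\|v_k-u_\lambda\|\leq r+1/k$. In particular, your probe $w=(1+\varepsilon)v_k$ is exactly the paper's $w=2v_k$ (any dilation of $v_k$ by a factor $>1$ does the job), and it yields the same key inequality $\alpha\geq\beta$ (the paper's \eqref{eq:vlambda1}), while your energy-splitting identity $\mathbf{c}_\lambda-I_\lambda(v_\lambda)=\frac{\alpha}{2}-\frac{\beta}{2^\star_Q}$ plus local minimality is the paper's \eqref{eq:vlambda2}.

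The difference is in the final step. Once you have $\alpha\geq\beta$ and $\frac{\alpha}{2}-\frac{\beta}{2^\star_Q}\leq 0$, the conclusion $\alpha=\beta=0$ is immediate: since $\frac{1}{2}>\frac{1}{2^\star_Q}$, the first inequality forces $\frac{\alpha}{2}-\frac{\alpha}{2^\star_Q}\leq\frac{\alpha}{2}-\frac{\beta}{2^\star_Q}\leq 0$, hence $\alpha\leq 0$. This is what the paper does. You instead also establish $\alpha\leq\beta$ via $w=v_\lambda$ (correct, though note the subtraction trick against the weak formulation is not strictly necessary: the paper gets there via the dominated convergence $\int v_k^{-\gamma}v_\lambda\to\int v_\lambda^{1-\gamma}$ and the Br\'ezis--Lieb split of $\int v_k^{2^\star_Q-1}(v_k-v_\lambda)$), and then run the classical Sobolev-concentration dichotomy $\alpha=\beta>0\Rightarrow\alpha\geq S_\mathbb{G}^{Q/2}\Rightarrow\mathbf{c}_\lambda-I_\lambda(v_\lambda)\geq\frac{1}{Q}S_\mathbb{G}^{Q/2}>0$, contradicting local minimality. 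This detour is valid, and it is the canonical Br\'ezis--Nirenberg/Tarantello argument for (PS)-compactness below the critical threshold, so it is a perfectly natural instinct; but here it is redundant, since the two inequalities $\alpha\geq\beta$ and $\frac{\alpha}{2}-\frac{\beta}{2^\star_Q}\leq 0$ already close the argument without invoking $S_\mathbb{G}$ at all. In Case II (Section \ref{subsec:CaseII}) the threshold $\frac{1}{Q}S_\mathbb{G}^{Q/2}$ does become essential; in Case I it is not needed because the functional values are pinned at the local minimum $\mathbf{c}_\lambda$.

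Your verification of the two side-conditions — admissibility of $w=v_\lambda$ and $w=(1+\varepsilon)v_k$ as elements of $H_\lambda$, the testing of the weak formulation of $v_\lambda$ with $S^1_0(\Omega)$-functions via Remark \ref{rem:defweaksolPb}, the sign of the singular difference quotient, and the Br\'ezis--Lieb-type identity $\int(v_k^{2^\star_Q-1}-v_\lambda^{2^\star_Q-1})(v_k-v_\lambda)=\beta+o(1)$ via the weak $L^{p_0}$-convergence of $v_k^{2^\star_Q-1}$ — is all sound.
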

\proof 
We want to prove that 
\begin{equation}\label{eq:stronconv}
	v_k\to v_\lambda\ \m{strongly in }S_0^1(\Omega)\ \m{as}\ k\to+\infty.
\end{equation}
Indeed, if this is the case, owing to the fact that $\|u_k-u_\lambda\|_{S^{1}_{0}(\Omega)}=r$ for any $k$,
we have
\[
r-\|v_k-u_k\|_{S^{1}_{0}(\Omega)} \leq \|v_k-u_\lambda\|_{S^{1}_{0}(\Omega)} \leq r+\|v_k-u_k\|_{S^{1}_{0}(\Omega)},
\]
which with the strong convergence and the fact that $\|v_k-u_k\|_{S^{1}_{0}(\Omega)}\leq \frac{1}{k}$
implies 
$$\|v_\lambda-u_\lambda\|_{S^{1}_{0}(\Omega)}=r.$$
We turn to prove \eqref{eq:stronconv}. 
As $v_k\to v_\lambda$ weakly in $S_0^1(\Omega)$ as $k\to+\infty$,
we proceed as in the proof of
Lemma \ref{lem:Lambdafinito} obtaining the following analogs of 
\eqref{eq:2.3Haitao}-\eqref{eq:2.5Haitao}
\begin{align}
	\int_\Omega v_k^{1-\gamma}&=\int_\Omega v_\lambda^{1-\gamma}+o(1)\label{eq:case1concl1}\\
	\vspace{9pt}
	\| v_k\|^{2^\star_Q}_{L^{2^\star_Q}}(\Omega)&=\|v_\lambda\|_{L^{2^\star_Q}(\Omega)}^{2^\star_Q}+
	\|v_k-v_\lambda\|_{L^{2^\star_Q}(\Omega)}^{2^\star_Q}+o(1)\\
	\|v_k\|_{S^{1}_{0}(\Omega)}^2&=\|v_\lambda\|_{S^{1}_{0}(\Omega)}^2+\|v_k-v_\lambda\|_{S^{1}_{0}(\Omega)}^2+o(1)\label{eq:case1concl2}
\end{align}
Also, because of (\ref{eq:limitvkCaseA})-(ii)
we get
\begin{equation}\label{eq:case1concl3}
	\lim_{k\to+\infty}\int_\Omega|v_k-v_\lambda|^{1-\gamma}=0.
\end{equation}
Therefore, choosing $w=v_\lambda$ in \eqref{eq:ineq2.21haitao},
we obtain
\[
\begin{split}
	\int_\Omega|\nabla_\G(v_k-v_\lambda)|^2+\lambda\int_\Omega v_k^{-\gamma}v_\lambda
	&\leq \lambda\int_\Omega v_k^{1-\gamma}+\int_\Omega v_k^{2^\star_Q-1}(v_k-v_\lambda)+o(1)\\
	&\leq \lambda\int_\Omega v_\lambda^{1-\gamma}+\|v_k-v_\lambda\|_{L^{2^\star_Q}(\Omega)}^{2^\star_Q}+o(1).	
\end{split}
\]
Observing that $0\leq v_k^{-\gamma}v_\lambda\leq u_\lambda^{-\gamma}v_\lambda\in L^1(\Omega)$
we can use again the Dominated Convergence Theorem,
\begin{equation}
	\int_\Omega v_k^{-\gamma}v_\lambda\to\int_\Omega v_\lambda^{1-\gamma}\ \ \m{as}\ k\to+\infty.
\end{equation}
To proceed further, we choose $w=2v_k\in H_\lambda$,
yielding
\[
\|v_k\|_{S^{1}_{0}(\Omega)}^2-\|v_k\|_{L^{2^\star_Q}(\Omega)}^{2^\star_Q}-\lambda\int_\Omega v_k^{1-\gamma}\geq-\frac{1}{k}\|v_k\|_{S^{1}_{0}(\Omega)}^2=o(1),
\]
combining the former equality
 with the follwing consequence of $v_\lambda$ being a solution of \eqref{eq:Main_Problem}$_\lambda$,
 \[
 \|v_\lambda\|_{S^{1}_{0}(\Omega)}^2-\|v_\lambda\|_{L^{2^\star_Q}(\Omega)}^{2^\star_Q}-\lambda\int_\Omega v_k^{1-\gamma}\geq o(1)\ \ \m{as}\ k\to+\infty
 \]
 we get
 \begin{equation}\label{eq:vlambda1}
 	\|v_k-v_\lambda\|_{S^{1}_{0}(\Omega)}^2\geq \|v_k-v_\lambda\|_{L^{2^\star_Q}(\Omega)}^{2^\star_Q}+o(1)\ \ \m{as}\ k\to+\infty.
 \end{equation}
 Assuming without loss of generalities that $I_\lambda(u_\lambda)\leq I_\lambda(v_\lambda)$,
 from \eqref{eq:EkelandCaseA} and \eqref{eq:case1concl1}-\eqref{eq:case1concl2}
 we obtain
 \[
 \begin{split}
 	I_\lambda(v_k-v_\lambda)&=I_\lambda(v_k)-I_\lambda(v_\lambda)+o(1)\\
 	&\leq I_\lambda(u_\lambda)-I_\lambda(v_\lambda)+\frac{1}{k^2}+o(1)\\
 	&=o(1)\ \ \m{as}\ k\to+\infty
 \end{split}
 \]
 which, together with \eqref{eq:case1concl3}, gives
 \begin{equation}\label{eq:vlambda2}
 	\frac{1}{2}\|v_k-v_\lambda\|_{S^{1}_{0}(\Omega)}^2-\frac{1}{2^\star_Q}\|v_k-v_\lambda\|^{2^\star_Q}_{L^{2^\star_Q}(\Omega)}=
 	I_\lambda(v_k-v_\lambda)+\frac{\lambda}{1-\gamma}\int_\Omega|v_k-v_\lambda|^{1-\gamma}\leq o(1).
 \end{equation}
 From \eqref{eq:vlambda1} and \eqref{eq:vlambda2}
 we finally conclude
 \[
 \lim_{k\to+\infty}\|v_k-v_\lambda\|_{L^{2^\star_Q}(\Omega)}^{2^\star_Q}=\lim_{k\to+\infty}\|v_k-v_\lambda\|_{S^{1}_{0}(\Omega)}^2=0,
 \]
 proving \eqref{eq:stronconv}.\fine
 
 We showed that for any $r\in (0,r_0)$ such that $\inf I_\lambda(u)=I_\lambda(u_\lambda)$
 (where the infimum is taken on the set of $u\in S^1_0(\Omega)$ such that $\|u-u_\lambda\|_{S^{1}_{0}(\Omega)}=r$)
 there exists a solution $v_\lambda$ to \eqref{eq:Main_Problem}$_\lambda$
 such that~$v_\lambda\not\equiv u_\lambda$.\newline

\subsection{Second case} \label{subsec:CaseII}

We are going to prove that if there exists $r_1\in(0,r_0)$ such that
\begin{equation}\label{eq:case2}
\inf_{\|u-u_\lambda\|_{S^{1}_{0}(\Omega)}=r_1}I_\lambda(u)>I_\lambda(u_\lambda)
\end{equation}
for some $r_1\in (0,r_0)$, then 
there exists a (second) solution $v_\lambda$ of the problem \eqref{eq:Main_Problem}$_\lambda$
such that $0<u_\lambda<v_\lambda$.

Consider the  space of continuous curves
$C([0,1],H_\lambda)$ endowed with the max distance
\begin{equation}\label{maxdistance}
d(\eta,\eta')=\max_{t\in[0,1]}\|\eta(t)-\eta'(t)\|_{S^1_0(\Omega)}.
\end{equation}
We recall that the space $H_\lambda$ was introduced in \eqref{def:tlambda}.
We define the following complete metric space,
\begin{equation}\label{def:gammal}
\Gamma_\lambda:=\left\{\eta\in C([0,1],H_\lambda):\begin{array}{l}
	 \eta(0)=u_\lambda,\\[0.1cm] 
	\|\eta(1)-u_\lambda\|_{S^{1}_{0}(\Omega)}>r_1,\\[0.1cm]
	I_\lambda(\eta(1))<I_\lambda(u_\lambda)
\end{array}
\right\}.
\end{equation}
In order to apply Ekeland's variational principle,
we need to show that $\Gamma_\lambda\neq \varnothing$
and to estimate the minimax level
\[
\ell_0:=\inf_{\eta\in\Gamma_\lambda}\max_{t\in[0,1]}I_\lambda(\eta(t)).
\]

We consider the functions $U_\varepsilon$ introduced
at \eqref{eq:def_u_varepsilon}.
For any $a\in \G$, we also consider the functions
\[
U_{\varepsilon,a}(g):=U_\varepsilon(a^{-1}\diamond g)
= {\varphi(a^{-1}\diamond g) T_{\varepsilon}(a^{-1}\diamond g)},
\]
where $\{T_\varepsilon\}$ is the one-parameter family 
of functions defined in \eqref{eq:Minimi_riscalati} starting from a fixed minimizer $T$
of the Sobolev Inequality \eqref{eq:Sobolev_Ineq}.

\begin{lemma}\label{lem:claim2.7haitao}
	There exists $\varepsilon_0>0$, $a\in\Omega$ and $R_0\geq1$ such that
\begin{align}
	 & I_\lambda(u_\lambda + R U_{\varepsilon,a}) < I_\lambda(u_\lambda) 
	&& \forall \varepsilon \in (0, \varepsilon_0),\ \forall R \geq R_0 \\
& I_\lambda(u_\lambda + t R_0 U_{\varepsilon,a}) < I_\lambda(u_\lambda) + \frac{1}{Q} S_\G^{Q/2} 
	&& \forall t \in [0, 1],\ \forall \varepsilon \in (0, \varepsilon_0)\label{eq:lemma2.7Haitao}
\end{align}
where $S_\G$ is the best Sobolev constant introduced at \eqref{eq:def_best_constant}.
\end{lemma}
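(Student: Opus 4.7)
The plan is to carry out a Brezis-Nirenberg-type energy estimate along the ray $s\mapsto I_\lambda(u_\lambda+sU_{\varepsilon,a})$, following the Euclidean scheme of \cite[Lemma 2.7]{Haitao} and relying crucially on the Loiudice asymptotics \eqref{eq:Stima_grad_u_varepsilon}\,-\,\eqref{eq:Stima_u_varepsilon}. First I pick $a\in\Omega$ and a radius $R_a>0$ so that $\mathrm{supp}(U_{\varepsilon,a})\subseteq \overline{B_{R_a}(a)}\Subset\Omega$ for every $\varepsilon$; by left-invariance of $\Delta_\G$ and of the Haar measure, the estimates \eqref{eq:Stima_grad_u_varepsilon}\,-\,\eqref{eq:Stima_u_varepsilon} pass verbatim to $U_{\varepsilon,a}$, and Remark~\ref{rem:defweaksolPb}-3) yields a constant $c=c(a,R_a,u_\lambda)>0$ with $u_\lambda\geq c$ a.e.\ on $\overline{B_{R_a}(a)}$.

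Assertion (a) is essentially immediate. The map $s\mapsto I_\lambda(u_\lambda+sU_{\varepsilon,a})$ is continuous on $[0,+\infty)$ and, since $2^\star_Q>2$, the critical term $-\tfrac{1}{2^\star_Q}\int(u_\lambda+sU_{\varepsilon,a})^{2^\star_Q}$ dominates every positive contribution as $s\to+\infty$. Using the uniform-in-$\varepsilon$ bounds supplied by \eqref{eq:Stima_grad_u_varepsilon}\,-\,\eqref{eq:Stima_u_varepsilon}, one selects a common $R_0\geq 1$ and $\varepsilon_0>0$ for which $I_\lambda(u_\lambda+RU_{\varepsilon,a})<I_\lambda(u_\lambda)$ whenever $R\geq R_0$ and $\varepsilon\in(0,\varepsilon_0)$.

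For assertion (b), I test the weak equation satisfied by $u_\lambda$ against $sU_{\varepsilon,a}\in S_0^1(\Omega)$ (legitimate by Remark~\ref{rem:defweaksolPb}-2)), obtaining
\[
s\int_\Omega\langle\nabla_\G u_\lambda,\nabla_\G U_{\varepsilon,a}\rangle_{\mathfrak{g}_1}=\lambda s\int_\Omega u_\lambda^{-\gamma}U_{\varepsilon,a}+s\int_\Omega u_\lambda^{2^\star_Q-1}U_{\varepsilon,a},
\]
and substitute into the natural expansion of $I_\lambda(u_\lambda+sU_{\varepsilon,a})-I_\lambda(u_\lambda)$. I control the critical remainder through a Brezis-Nirenberg-type pointwise inequality of the form $(A+B)^{2^\star_Q}\geq A^{2^\star_Q}+B^{2^\star_Q}+2^\star_Q A^{2^\star_Q-1}B+\alpha AB^{2^\star_Q-1}$ (with $\alpha=\alpha(Q)>0$), and the singular remainder through the concavity of $t\mapsto t^{1-\gamma}$ together with $u_\lambda\geq c$ on $\mathrm{supp}(U_{\varepsilon,a})$; this yields
\[
I_\lambda(u_\lambda+sU_{\varepsilon,a})-I_\lambda(u_\lambda)\leq \frac{s^2}{2}\|U_{\varepsilon,a}\|^2-\frac{s^{2^\star_Q}}{2^\star_Q}\|U_{\varepsilon,a}\|^{2^\star_Q}_{L^{2^\star_Q}}-c_1 s^{2^\star_Q-1}\int_\Omega u_\lambda U_{\varepsilon,a}^{2^\star_Q-1}+o\bigl(\varepsilon^{(Q-2)/2}\bigr),
\]
uniformly in $s\in[0,R_0]$. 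Computing $\max_{s\geq 0}\{\tfrac{s^2}{2}A-\tfrac{s^{2^\star_Q}}{2^\star_Q}B\}=\tfrac{1}{Q}A^{Q/2}/B^{(Q-2)/2}$ with $A=\|U_{\varepsilon,a}\|^2$ and $B=\|U_{\varepsilon,a}\|^{2^\star_Q}_{L^{2^\star_Q}}$, and invoking \eqref{eq:Stima_grad_u_varepsilon}\,-\,\eqref{eq:Stima_u_varepsilon}, gives $\tfrac{1}{Q}S_\G^{Q/2}+O(\varepsilon^{Q-2})$; meanwhile the explicit rescaling \eqref{eq:Minimi_riscalati} combined with $u_\lambda\geq c>0$ near $a$ yields $\int_\Omega u_\lambda U_{\varepsilon,a}^{2^\star_Q-1}\geq c_1'\varepsilon^{(Q-2)/2}$. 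Since $(Q-2)/2<Q-2$, the Brezis-Nirenberg negative contribution strictly beats the positive error $O(\varepsilon^{Q-2})$ for $\varepsilon$ sufficiently small, giving the strict bound \eqref{eq:lemma2.7Haitao} uniformly in $t\in[0,1]$.

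The main obstacle is matching the various $\varepsilon$-rates: the singular remainder, the positive Sobolev error, and the Brezis-Nirenberg gain must be placed in the correct strict hierarchy. This requires the sharp Loiudice-type moment estimates on $\int U_{\varepsilon,a}^q$ for several exponents $q$ (most notably $q=2$ and $q=2^\star_Q-1$), as well as a careful justification of the algebraic inequality with the extra term $\alpha AB^{2^\star_Q-1}$; in the low-exponent regime $2^\star_Q<3$ (i.e.\ $Q\geq 7$) this last step has to be replaced by a region-by-region estimate of the critical nonlinearity as in \cite{BrezisNirenberg}, splitting the integral according to whether $sU_{\varepsilon,a}$ is larger or smaller than $u_\lambda$.
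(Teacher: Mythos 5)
Your plan follows the same Brezis--Nirenberg/Tarantello scheme as the paper: cancel the linear term by testing the equation with $U_{\varepsilon,a}$, apply the Brezis--Nirenberg algebraic inequality to the critical nonlinearity (with a case split depending on $2^\star_Q\lessgtr 3$), estimate the remainders, and maximize the leading profile $\frac{s^2}{2}A-\frac{s^{2^\star_Q}}{2^\star_Q}B$. The lower bound $\int_\Omega u_\lambda U_{\varepsilon,a}^{2^\star_Q-1}\geq c'\varepsilon^{(Q-2)/2}$ via positivity of $u_\lambda$ near $a$ is actually a simpler and perfectly adequate substitute for the Folland--Stein convolution argument the paper uses to pin down the precise constant. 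However, your treatment of the singular remainder has a genuine gap.

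You claim that concavity of $t\mapsto t^{1-\gamma}$ plus $u_\lambda\geq c$ on $\mathrm{supp}(U_{\varepsilon,a})$ yields a remainder of order $o(\varepsilon^{(Q-2)/2})$. Neither the first-order nor the second-order version of that estimate delivers this rate for all $Q$. The first-order bound (monotonicity of $t^{1-\gamma}$) gives $\text{singular remainder}\leq \lambda s\,c^{-\gamma}\int_\Omega U_{\varepsilon,a}$, but $\int_\Omega U_{\varepsilon,a}\sim C\varepsilon^{(Q-2)/2}$, so this is only $O(\varepsilon^{(Q-2)/2})$ — same order as the Brezis--Nirenberg gain, with no control over the competing constants. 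The second-order Taylor bound gives $\leq C\int_\Omega U_{\varepsilon,a}^2$, and since $\int_\Omega U_{\varepsilon,a}^2\sim C\varepsilon^2$ for $Q\geq 5$, this is $o(\varepsilon^{(Q-2)/2})$ only when $Q<6$; for $Q\geq 6$ it is merely $O(\varepsilon^{(Q-2)/2})$ or worse. The paper obtains the required $o(\varepsilon^{(Q-2)/2})$ by splitting $\Omega$ into an inner region $\Omega^{\mathrm{in}}_\varepsilon=\{|a^{-1}\diamond g|_\G\leq\varepsilon^\tau\}$ and its complement, with $0<\tau<\tfrac14$: on the inner region (where $U_{\varepsilon,a}$ is large but the measure is small) it uses the first-order bound, producing $O(\varepsilon^{(Q-2)/2+2\tau})$; on the outer region (where $U_{\varepsilon,a}$ is pointwise $O(\varepsilon^{(Q-2)(1/2-\tau)})$) it uses the second-order Taylor bound with $u_\lambda\geq c$, producing $O(\varepsilon^{(Q-2)(1-2\tau)})$. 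Both are $o(\varepsilon^{(Q-2)/2})$ precisely because $\tau<1/4$. Without this interplay your argument would fail for $Q\geq 6$, which is the generic case in Carnot groups.

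Two smaller points: the pointwise inequality $(A+B)^{2^\star_Q}\geq A^{2^\star_Q}+B^{2^\star_Q}+2^\star_Q A^{2^\star_Q-1}B+\alpha AB^{2^\star_Q-1}$ is not valid for all $A,B\geq 0$ (e.g.\ for $2<2^\star_Q<3$ it fails for $B\gg A$); the paper instead invokes Brezis--Nirenberg's Lemma~4 as an \emph{equality with controlled error} and then bounds the error integrals by $t^\beta R^\beta\, o(\varepsilon^{(Q-2)/2})$ via Folland--Stein convolution estimates. Also, when extracting the upper bound for \eqref{eq:lemma2.7Haitao} you should argue, as Tarantello does, that the argmax $s_\varepsilon$ of the perturbed profile stays bounded away from $0$ (indeed $s_\varepsilon\to S>0$), so that the $-s_\varepsilon^{2^\star_Q-1}K\varepsilon^{(Q-2)/2}$ gain is genuinely subtracted from $\tfrac1Q S_\G^{Q/2}+O(\varepsilon^{Q-2})$.
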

\proof

We start by breaking down the term $I_\lambda(u+tRU_{\varepsilon,a})$. From now on
we will use the notation $p:=2^\star_Q-1=\frac{Q+2}{Q-2}$
for the sake of a simpler writing. We have

\begin{align*}
    I_\lambda(u+tRU_{\varepsilon,a})&=
    \frac{1}{2}\int|\nabla_{\G}u|^2-\frac{1}{p+1}\int u^{p+1}-\frac{\lambda}{1-\gamma}\int u^{1-\gamma}\\
    &\ \ \ \ +tR \left(\int\langle \nabla_{\G}u,\nabla_{\G}U_{\varepsilon,a}\rangle_{\mathfrak{g}_{1}}-\int u^pU_{\varepsilon,a}-\lambda\int u^{-\gamma}U_{\varepsilon,a}\right)\tag{C}\label{eq:C}\\
    &\ \ \ \ -\frac{1}{p+1}\left(\int(u+tRU_{\varepsilon,a})^{p+1}-\int u^{p+1}\right)+tR\, \int u^pU_{\varepsilon,a}\tag{A}\label{eq:A}\\
    &\ \ \ \ +\frac{t^2R^2}{2} \int|\nabla_{\G}U_{\varepsilon,a}|^2\tag{B}\label{eq:B}\\
    &\ \ \ \ -\frac{\lambda}{1-\gamma}\left(\int(u+tRU_{\varepsilon,a})^{1-\gamma}-\int u^{1-\gamma}\right)+\lambda\, tR\, \int u^{-\gamma}U_{\varepsilon,a}\tag{D}\label{eq:D}
\end{align*}
where all the integrals are over $\Omega\subset \G$.
We treat the conclusion of the calculation and after that we pass to the breakdown of each part of the above sum.
In particular, we are going to prove that, as $\varepsilon \to 0^+$, we have
\begin{align*}
    \eqref{eq:C}&\ =0\\
\eqref{eq:A}&\ =-\frac{A\, t^{p+1}R^{p+1}}{p+1}-t^pR^p\, K\, \varepsilon^{\frac{Q-2}{2}}+o\left(\varepsilon^{\frac{Q-2}{2}}\right)\\
    \eqref{eq:B}&\ =\frac{B\, t^2R^2}{2}+o\left(\varepsilon^{\frac{Q-2}{2}}\right)\\
    \eqref{eq:D}&\ =o\left(\varepsilon^{\frac{Q-2}{2}}\right)
\end{align*}
for an opportune choice of positive constants $A,B,K$.
Again in the spirit of a simpler notation,
we define $s:=tR$ and $S:=\left(\frac{B}{A}\right)^{\frac{1}{p-1}}$.\newline

We follow the approach of \cite{Tarantello} and introduce
\[
f_\varepsilon(s):=\frac{B\, s^2}{2}-\frac{A \, s^{p+1}}{p+1}-s^p\, K\, \varepsilon^n.
\]
We denote by $s_\varepsilon>0$ the point where $f_\varepsilon$ achieves its max. Observe that
\begin{equation}\label{eqboh}
	f'_\varepsilon(s)=Bs-As^p-ps^{p-1}K\varepsilon^n
\end{equation}
and also
\begin{equation}\label{bsas}
BS-AS^p=0.
\end{equation}
The last equations \eqref{eqboh} and \eqref{bsas} imply
\[
S>s_\varepsilon>0\ \m{ and } \ s_\varepsilon\to S\m{ as }\varepsilon\to0^+.
\]
Let's write
\[
s_\varepsilon=S(1-t_\varepsilon).
\]
We can develop for $t_\varepsilon$,
\[
t_\varepsilon\, (p-1)\left(\frac{B^p}{A}\right)^{\frac{1}{p-1}}=p\, \frac{B}{A}\, K\, \varepsilon^n+o(\varepsilon^n), \quad \textrm{ as } \varepsilon \to 0^+.
\]
and we obtain, as $\varepsilon \to 0^+$,
\begin{align*}
    I_\lambda(u+tRU_{\varepsilon,a})
    &<I_\lambda(u)+\frac{Bs_\varepsilon^2}{2}-\frac{As_\varepsilon^{p+1}}{p+1}-s_\varepsilon^{p}K\varepsilon^n+o(\varepsilon^n)\\
    &=I_\lambda(u)+\frac{BS^2}{2}-\frac{AS^{p+1}}{p+1}-BSt_\varepsilon+AS^pt_\varepsilon-S^{p}K\varepsilon^n+o(\varepsilon^n)\\
    &\ (\m{and therefore by Eq.~\eqref{bsas}})\\
    &=I_\lambda(u)+\frac{BS^2}{2}-\frac{AS^{p+1}}{p+1}-S^pK\varepsilon^n+o(\varepsilon^n)\\
    &=I_\lambda(u)+\left(\frac{1}{2}-\frac{1}{p+1}
    \right)\, \frac{B^{\frac{p+1}{p-1}}}
    {A^{\frac{2}{p-1}}}
    -S^pK\varepsilon^n+o(\varepsilon^n)
\end{align*}
which allows to conclude proving \eqref{eq:lemma2.7Haitao}, because
$\frac{1}{2}-\frac{1}{p+1}=\frac{1}{Q}$ and
moreover, we will prove below the following equalities
\[
A=\|T\|_{L^{2^\star_Q}(\G)},\ B=\||\nabla_\G T|\|_{L^2(\G)}^2,
\]
therefore, by definition
\[
 S_{\G}=\frac{B}{A^\frac{2}{p+1}}.
\]

\subsubsection*{Breaking down part \eqref{eq:C}}
This part
comes directly from the Definition \ref{def:weak_sub_super_sol}
of a weak solution,
by using $U_{\varepsilon,a}$ as test function.
Therefore, this term is null.

\subsubsection*{Breaking down part \eqref{eq:B}}

We set the constant $B$ to be
\[
B:=\||\nabla_{\G}T|\|_{L^2(\G)}^2=\||\nabla_{\G}T_{\varepsilon,a}|\|_{L^2(\G)}^2=S_\G^{Q\slash 2},
\]
then, as already stated in \eqref{eq:Stima_grad_u_varepsilon},
we know that
\[
\||\nabla_\G U_{\varepsilon,a}|\|_{L^2(\G)}^2=B+O(\varepsilon^{Q-2}).
\]
Therefore, if we set $\varphi$ and $a$ as to have $\left(a\cdot \supp( \varphi)\right)\Subset \Omega$,
then we can
 summarize part \eqref{eq:B} by 
\begin{equation}\label{partB}
    \frac{Bt^2R^2}{2}+t^2R^2\, O(\varepsilon^{Q-2}).
\end{equation}

\subsubsection*{Breaking down part \eqref{eq:A}}

This part requires a further breaking down,
\begin{align*}
    -\frac{1}{p+1}&\left(\int_\Omega (u+tRU_{\varepsilon,a})^{p+1}-\int_\Omega u ^{p+1}\right)+tR\, \int_\Omega u^pU_{\varepsilon,a} \\
&= -\frac{t^{p+1}R^{p+1}}{p+1}\int_\Omega U_{\varepsilon,a}^{p+1}\tag{A1}\\
& \quad-t^pR^p\int_\Omega u\, U_{\varepsilon,a}^p\tag{A2}\\
& \quad +X_\varepsilon + Y_\varepsilon\tag{A3}
\end{align*}

In particular, we will prove that for almost every $a\in \Omega$, the following
are true,
\begin{equation}\label{a1}\tag{A1}
    -\frac{t^{p+1}R^{p+1}}{p+1}\int_\Omega U_{\varepsilon,a}^{p+1}=-\frac{A\, t^{p+1}R^{p+1}}{p+1}+O(\varepsilon^Q)
\end{equation}
for an opportune positive constant $A$,
\begin{equation}\label{a2}\tag{A2}
-t^pR^p\int_\Omega u \, U_{\varepsilon,a}^p=-t^pR^p\, K\, \varepsilon^{\frac{Q-2}{2}}+o(\varepsilon^{\frac{Q-2}{2}})
\end{equation}
 for an opportune positive constant $K$,
\begin{equation}\label{a3}\tag{A3}
    X_\varepsilon + Y_\varepsilon=t^\be R^\be\, o(\varepsilon^{\frac{Q-2}{2}})
\end{equation}
for some $\beta$ such that $1< \beta< \frac{Q}{Q-2}$.\newline

We will use widely the
\cite[Lemma 4]{BrezisNirenberg}, that we state here precisely

\begin{lemma}\label{lemmetto}
    For any $x,y\in \R$ and $q\geq1$ we have
    \[
        \left||x+y|^q
        -|x|^q-|y|^q
        -qxy(|x|^{q-2}+|y|^{q-2})
        \right|
        =
        X+Y
    \]
    with $X,Y$ respecting the following,
    \begin{enumerate}
        \item if $q\leq 3$,the $X=0$ when $|x|<|y|$ and $Y=0$ when $|x|>|y|$,
        moreover there exists a constant $C=C(q)$ such that 
        \begin{align*}
            |X|&\leq C\, |x||y|^{q-1}\ \m{ if }|x|\geq |y|\\
            |Y|&\leq C\, |x|^{q-1}|y|\ \m{ if }|x|\leq |y|.
        \end{align*}
        \item if $q>3$, there exists a constanct $C=C(q)$ such that 
        \begin{align*}
            |X+Y|\leq C\, \left(
            |x|^{q-2}y^2+x^2|y|^{q-2}
            \right)
        \end{align*}
    \end{enumerate}
\end{lemma}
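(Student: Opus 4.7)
The approach is to reduce the two-variable algebraic identity to a one-variable inequality by scaling, and then to prove the resulting one-variable bound via standard Taylor-type estimates.

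Set $F(x,y):=|x+y|^q-|x|^q-|y|^q-qxy(|x|^{q-2}+|y|^{q-2})$. If $xy=0$, then $F\equiv 0$ and the claim is trivial (take $X=Y=0$). Otherwise the symmetries $F(-x,-y)=F(x,y)$ and $F(y,x)=F(x,y)$ allow me to assume $x>0$ and $|y|\le x$. Setting $t:=y/x\in[-1,1]\setminus\{0\}$, a direct substitution yields $F(x,y)=x^q\,\phi(t)$, where
\[
\phi(t):=|1+t|^q-1-|t|^q-qt\bigl(1+|t|^{q-2}\bigr).
\]
Assigning $X:=|F|$ and $Y:=0$ on $\{|x|\ge|y|\}$ and swapping the roles of $x$ and $y$ on the complementary region, the lemma reduces to proving the one-variable estimates
\[
|\phi(t)|\le C|t|^{q-1}\ \ (q\le 3),\qquad |\phi(t)|\le C\bigl(t^2+|t|^{q-2}\bigr)\ \ (q>3),
\]
uniformly for $|t|\le 1$.

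To prove these, I split $\phi=g-h$ with $g(t):=|1+t|^q-1-qt$ and $h(t):=|t|^q+qt\,|t|^{q-2}$. On $|t|\le 1$ the piece $h$ satisfies $|h(t)|\le(q+1)|t|^{q-1}$ directly, since $|t|^q=|t|\cdot|t|^{q-1}$ and $|t|\cdot|t|^{q-2}=|t|^{q-1}$. For $g$, the integral form of the Taylor remainder gives
\[
g(t)=\int_0^t q\bigl(|1+s|^{q-1}\,\mathrm{sgn}(1+s)-1\bigr)\,ds,
\]
from which $|g(t)|\le Ct^2$ on $[-1,1]$ when $q\ge 2$ (the integrand is $C^1$ with bounded derivative on $[-1,1]$), and $|g(t)|\le C|t|^{q-1}$ on $[-1,1]$ when $q\in[1,2)$ (via Bernoulli's inequality $(1+s)^{q-1}\le 1+(q-1)s$ to get $|g|\le Ct^2$ on $[-1/2,1/2]$, combined with the uniform bound $|g|\le C$ for $|t|\in[1/2,1]$, on which $|t|^{q-1}$ is bounded below by $(1/2)^{q-1}$). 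Thus for $q\le 3$ and $|t|\le 1$ one has $t^2\le|t|^{q-1}$, so $|\phi(t)|\le C|t|^{q-1}$; for $q>3$ and $|t|\le 1$ one has $|t|^{q-1}\le|t|^{q-2}$, hence $|h(t)|\le C|t|^{q-2}$ and $|\phi(t)|\le C(t^2+|t|^{q-2})$.

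Undoing the scaling $F(x,y)=x^q\phi(y/x)$, the first one-variable estimate becomes $|F(x,y)|\le C|x|\,|y|^{q-1}$ on $\{|x|\ge|y|\}$, which realises item (1) of the lemma through $X:=|F|\chi_{\{|x|\ge|y|\}}$ and $Y:=|F|\chi_{\{|x|<|y|\}}$ (the complementary one-sided bound follows by exchanging the roles of $x$ and $y$). The second estimate, being symmetric in $x$ and $y$, immediately yields item (2) for any admissible decomposition $|F|=X+Y$. The main technical subtlety is the sub-range $q\in[1,2)$, where $s\mapsto(1+s)^{q-1}$ lacks classical $C^2$-regularity near $s=-1$; this is bypassed by the integral form of the Taylor remainder written above, which only requires local integrability of the first derivative rather than an $L^\infty$ bound on the second one.
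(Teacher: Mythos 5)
Your proof is correct, but note that the paper does not actually prove this lemma: it is quoted verbatim from \cite{BrezisNirenberg} (their Lemma 4), so there is no internal proof to compare with. What you supply is a self-contained elementary argument: the homogeneity reduction $F(x,y)=x^{q}\,\phi(y/x)$ on the region $\{|x|\geq|y|\}$ (legitimate thanks to the symmetries $F(x,y)=F(y,x)=F(-x,-y)$), the splitting $\phi=g-h$ with $h$ estimated trivially by $(q+1)|t|^{q-1}$, and $g(t)=|1+t|^{q}-1-qt$ controlled by $Ct^{2}$ (for $q\geq2$, and on $[-1/2,1/2]$ for $q\in[1,2)$) plus a crude uniform bound away from $t=0$; undoing the scaling and taking $X=|F|\chi_{\{|x|\geq|y|\}}$, $Y=|F|\chi_{\{|x|<|y|\}}$ gives exactly the two cases $q\leq3$ and $q>3$, since $t^{2}\leq|t|^{q-1}$ on $[-1,1]$ iff $q\leq3$ and $|t|^{q-1}\leq|t|^{q-2}$ there. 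This is in the same elementary spirit as the Br\'ezis--Nirenberg proof; its value here is self-containedness and making visible where the thresholds $q=3$ and $q=2$ enter, at the price of a page of calculus the authors chose to outsource to a citation. One step deserves tightening: Bernoulli's inequality $(1+s)^{q-1}\leq1+(q-1)s$ only bounds the integrand from one side, so on $s\in[-1/2,0]$ you should instead use the mean value theorem (or concavity of $s\mapsto(1+s)^{q-1}$) together with $(1+s)^{q-2}\leq2^{2-q}$ to get the two-sided estimate $|g(t)|\leq Ct^{2}$ on $[-1/2,1/2]$ when $q\in(1,2)$; also, for $q<2$ the degenerate case $y=0$ requires reading $qxy|y|^{q-2}$ as $qx\,\mathrm{sgn}(y)|y|^{q-1}$, as you implicitly do. Neither point affects the validity of the argument, and in the paper's application $q=2^{\star}_{Q}>2$ anyway.
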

We apply this lemma after imposing
\begin{align*}
    q&=2^\star_Q=p+1\\
    x&=u\\
    y&=tR\, U_{\varepsilon,a}.
\end{align*}

\textit{Part \eqref{a1}}:
we write
\begin{equation}\label{eq:t+phi-1}
U_{\varepsilon,a}^{p+1}=T_{\varepsilon,a}^{p+1}+T_{\varepsilon,a}^{p+1}\, (\varphi_a^{p+1}\, \mathbf{1}_\Omega-1).
\end{equation}
where $T_{\varepsilon,a}(g):=T_{\varepsilon}(a^{-1}\diamond g)$
and $\varphi_a(g):=\varphi(a^{-1}\diamond g)$.
By definition there exists a neighborhood 
$V$ of $a\in\Omega$
such that $\varphi_a|_V\equiv 1$. Moreover, 
as a consequence of \eqref{eq:Stima_Bonf_Ugu}, we have that
$$T_{\varepsilon,a}=\varepsilon^{\frac{Q-2}{2}}\, \widetilde T_{\varepsilon,a}$$
\noindent with $\widetilde T_{\varepsilon,a}$ 
uniformly bounded on $\Omega\backslash V$ for $\varepsilon\to0^+$. As $\varphi_a|_V\equiv 1$, 
the integral of the second term in \eqref{eq:t+phi-1}
is an $O\left(\varepsilon^{\frac{Q-2}{2}\, (p+1)}\right)=O(\varepsilon^Q)$.\newline

For the first term, we set the constant
\[
A:=\|T_{\varepsilon,a}\|_{L^{p+1}(\G)}^{p+1}=\|T\|_{L^{2^\star_Q}(\G)}^{2^\star_Q}=S_\G^{Q\slash 2}.
\]
Therefore, combining
these terms, the integral of $U_{\varepsilon,a}^{p+1}$ equals
\[
A+O(\varepsilon^{Q}) \quad \textrm{ as } \varepsilon \to 0^{+},
\]
as we wanted to prove.

\cor{Part \eqref{a2}}:
to treat this part we consider 
some known results about convolutions 
on homogeneous Lie groups, that we use over the
Carnot group $\G$.

Consider two measurable functions $f,h\colon \G\to \R$,
then their convolution is defined by
    \[
   	(f\ast h)(x):=\int_\G f(y)h(y^{-1}\diamond x)
   	=\int_\G f(x\diamond y^{-1})h(y).
    \]
Consider $\psi\colon \G\to \R$ measurable
such that $\int_\G\psi$ is finite. 
From this we get
\[
\psi_t:=\frac{1}{t^Q}\, \psi\circ\delta_{\frac{1}{t}}
\]
where $Q$ is the homogeneous dimension of $\G$ and
$\delta_{\varepsilon}$ the associated family of dilations.
Then,
by \cite[Proposition 1.20]{FollandStein},
we know that if $f\in L^q(\G)$, with $1\leq q\leq+\infty$, then
\[
\left\|f\ast\psi_t-f\, \int_\G\psi\right\|_{L^q(\G)}\to 0,\ \ \m{ as }\ t\to0^+.
\]
In particular this implies that
for a.e~$g\in\G$, $f\ast\psi_t(g)\to f(g)\, \int_\G\psi$ as $t\to0^+$.\newline

We consider 
\[
u\, U_{\varepsilon,a}^p=u\, \varphi_a^p\, T_{\varepsilon,a}^p=u T_{\varepsilon,a}^p+u(\varphi_a^p-1)T_{\varepsilon,a}^p,
\]
and we integrate separately the two terms.
In order to integrate the first one, we set
\[
\psi(g):=T^p(g^{-1})\quad \textrm{ for all } g\in\G,
\]
therefore, $\psi_\varepsilon(g)=\varepsilon^{\frac{Q-2}{2}}\, T^p_{\varepsilon}(g^{-1})$, and
\[
\int_\G u\, T_{\varepsilon,a}^p=\varepsilon^{\frac{Q-2}{2}}\, u\ast \psi_\varepsilon(a)\textrm{ for all } a\in \Omega.
\]
We point out that we are considering $u$ as a function defined
on the whole group $\G$ with $u|_{\G\backslash \Omega}\equiv0$.
As we said above, for a.e.~$a$ the convolution converges to $u(a)\, \int_\G \psi$.
Observe that in this case $\int_\G\psi=\int_\G T^p$ because the inversion
has Jacobian identically equal to $1$ in the case of Carnot groups. Therefore
for a.e.~$a\in\G$,
\[
u\ast \psi_\varepsilon(a)\to u(a)\, \int_\G T^p=:K\ \ \m{as}\ \varepsilon\to0^+.
\]

Regarding the second term, we observe as before that $\varphi_a|_V\equiv1$
in an opportune neighborhood $V$ of $a$, and 
$T_{\varepsilon,a}=\varepsilon^{\frac{Q-2}{2}}\, \widetilde T_{\varepsilon,a}$ with
$\widetilde T_{\varepsilon,a}$ uniformly bounded on $\Omega\backslash V$ as $\varepsilon\to0^+$.
Therefore, for an opportune positive constant $M$,
\[
\int_\Omega u\, (\varphi_a^p-1)\, T_{\varepsilon,a}^p\leq M\, \varepsilon^{\frac{Q-2}{2}\, p}\, \int_\Omega u=O\left(\varepsilon^{\frac{Q+2}{2}}\right).
\]

We may conclude that
\[
\int_\Omega u\, U_{\varepsilon,a}^p=K\, \varepsilon^{\frac{Q-2}{2}}+o\left(\varepsilon^{\frac{Q-2}{2}}\right)
\]
for a.e.~$a\in\Omega$,
and for an opportune positive constant $K$ (depending on $a$).

\cor{Part \eqref{a3}}: we follow again the idea of \cite{BrezisNirenberg}.
As suggested by Lemma \ref{lemmetto}, we have to consider two sub-cases. First when $q=p+1\leq 3$, which implies $Q\geq 5$,
 and secondly the case $q=4$, that gives $Q= 4$. We observe indeed that 
 there exists no non-trivial Carnot groups with $Q<4$
 (trivial here means with step $1$ and therefore a Euclidian space structure);
 {moreover, we have $Q = 4$ only in the case when 
 $\mathbb{G} = \mathbb{H}^1$ is the first Heisenberg group}.
\vspace{0.1cm}

\begin{enumerate}
\item \textsc{Case I): $Q\geq 5$}. We have
\begin{align*}
    |X_\varepsilon|&\leq Ct^pR^p\, \int_{\{u\geq tRU_{\varepsilon,a}\}}u\, U_{\varepsilon,a}^p\\
    |Y_\varepsilon|&\leq CtR\, \int_{\{u<tRU_{\varepsilon,a}\}}u^p\, U_{\varepsilon,a}
\end{align*}
\begin{itemize}
   \item Consider $X_\varepsilon$. Let $\alpha,\beta$ real numbers such that $\alpha+\beta=p$ and $0<\beta<\frac{Q}{Q-2}$.
   As a consequence $u\, t^pR^pU_{\varepsilon,a}^p\leq u^{1+\al}\, t^\beta R^\beta U_{\varepsilon,a}^\beta$ on the set $\{u\geq tRU_{\varepsilon,a}\}$.
    Moreover, by \eqref{eq:Stima_Bonf_Ugu} we have the estimate
    \[
    U_{\varepsilon,a}(g)\leq\frac{\varepsilon^{\frac{Q-2}{2}}}{{|a^{-1}\diamond g|_{\mathbb{G}}}^{Q-2}}\quad \textrm{ for all } g\in\G
    \]
    and
    \[
    |X_\varepsilon|\leq t^\beta R^\beta\int_\Omega u^{1+\al}\, U_{\varepsilon,a}^\beta
    \leq t^\beta R^\beta\int_\Omega u^{1+\al}\, \frac{\varepsilon^{\beta \,\frac{Q-2}{2}}}{{|a^{-1}\diamond g|_{\mathbb{G}}}^{\beta(Q-2) }}.
    \]
    
    By definition $1+\alpha<p+1=2^\star_Q$,
    this implies that $u^{1+\alpha}$ is in $L^1(\Omega)$ because
    $u\in S^1_0(\Omega)\Subset L^{2^\star_Q}(\Omega)$.
    Moreover, $\frac{1}{{|g|_{\mathbb{G}}}^{\beta (Q-2)}}$ is an $L^1$ function as well,
    because
     $\beta(Q-2) <Q$, the homogeneous dimension of $\G$.
    We can conclude by observing that 
    for almost every $a\in\G$,
    \[
    \int_\G u^{1+\alpha}\, \frac{1}{{|a^{-1}\diamond g|_{\mathbb{G}}}^{Q-2}}<\infty.
    \]
    Indeed, this is the convolution $(u^{1+\al}\star f)(a)$ where 
    $f(g):=\frac{1}{{|g^{-1}|_{\mathbb{G}}}^{Q-2}}$,
    and by \cite[Proposition 1.19]{Folland} it has finite
    value for a.e.~$a\in\G$.
  Therefore, we conclude that 
  $|X_\varepsilon|\leq t^{\beta} R^{\beta}\, O\left(\varepsilon^{\beta\,\frac{Q-2}{2}}\right)$ 
  for any~$\beta<\frac{Q}{Q-2}$, that gives
    \[
    |X_\varepsilon|=t^\beta R^\beta \, o(\varepsilon^{\frac{Q-2}{2}})\ \ \ \m{for }\beta:\ 1<\beta<\frac{Q}{Q-2}\ \m{and a.e. }a\in\G.
    \]\newline
    \item Consider $Y_\varepsilon$. Analougously to what we did for $X_\varepsilon$,
     we take $\al,\beta>0$ such that $\al+\beta=p=2^\star_Q-1$ and $\beta<\frac{2}{Q-2}$. 
     Therefore, $u^p\, U_{\varepsilon,a}\leq u^\al\, t^{1+\be}R^{1+\be}U_{\varepsilon,a}^{1+\be}$ 
     on the set $\{u<tRU_{\varepsilon,a}\}$.
    Moreover, from the definition of $Y_\varepsilon$ we get
    \[
    |Y_\varepsilon|\leq t^{1+\beta}R^{1+\beta}\,
    \int_\Omega u^\al\, \frac{\varepsilon^{\frac{Q-2}{2}\,(1+\be)}}{{|a^{-1}\diamond g|_{\mathbb{G}}}^{(Q-2)(1+\be)}}.
    \]
    Observe that $\al<p$, therefore $u^\al\in L^1(\Omega)$.
    Moreover, by definition we have the inequality $1<1+\be<\frac{Q}{Q-2}$, 
    therefore $\frac{1}{{|g|_{\mathbb{G}}}^{(Q-2)(1+\be)}}$ is a function $L^1(\Omega)$.
    As before we have the convergence of the integral,
    \[
    \int_\G u^\al\,\frac{1}{{|a^{-1}\diamond g|_{\mathbb{G}}}^{(Q-2)(1+\be)}}
    \]
    for almost every $a\in \G$,
    therefore $|Y_\varepsilon|\leq t^{1+\be}R^{1+\be}\, O\left(\varepsilon^{\frac{Q-2}{2}\cdot(1+\be)}\right)$ 
    with the constraint $1<1+\be<\frac{Q}{Q-2}$, and as a consequence
    \[
    |Y_\varepsilon|=t^{1+\be}R^{1+\be}\, o\left(\varepsilon^{\frac{Q-2}{2}}\right)\ \ \m{for }\be:\ 0<\be<\frac{2}{Q-2},\ \m{and for a.e. }a\in\G.
    \]
    This concludes the case $Q\geq5$.\newline
\end{itemize}

\item \textsc{Case II): $Q = 4$}. We continue to follow Lemma \ref{lemmetto}. As $p+1=4>3$ we have
\[
|X_\varepsilon+Y_\varepsilon|\leq Ct^2R^2\, \int u^2\, U_{\varepsilon,a}^2.
\]
By the estimate \eqref{eq:Stima_Bonf_Ugu}, we know that
\[
T_{\varepsilon,a}(g)\leq\min\left\{\frac{1}{\varepsilon},\ \frac{\varepsilon}{{|a^{-1}\diamond g|_{\mathbb{G}}}^2}\right\}.
\]
Therefore, if we consider $\al,\be>0$ such that $\al+\be=2$,
we have 
\[
U_{\varepsilon,a}^2\leq \frac{1}{\varepsilon^{\al}}\, \frac{\varepsilon^\be}{{|a^{-1}\diamond g|_{\mathbb{G}}}^{2\be}}
\]
and as a consequence
\[
|X_\varepsilon+Y_\varepsilon|\leq Ct^2R^2\, \varepsilon^{2-2\al}\, \int_\G\frac{u^2}{{|a^{-1}\diamond g|_{\mathbb{G}}}^{2\beta}}.
\]
The integral is again
the convolution of two functions in $L^1(\G)$, therefore
it is finite for almost every $a\in \G$.

We conclude that $|X_\varepsilon+Y_\varepsilon|=t^2R^2\, O(\varepsilon^{2-2\al})$ for any $\al$ such that  $0<\al<2$ 
and almost every $a\in \G$. 
Therefore, by setting $\al<\frac{1}{2}$ we get 
\[
|X_\varepsilon+Y_\varepsilon|=t^2R^2\, o(\varepsilon) \quad \textrm{ as } \varepsilon \to 0^{+}.
\]
\end{enumerate}

\cor{Part \eqref{eq:D}}:
we finally need to estimate
\[
   -\frac{\lambda}{1-\gamma}\left(\int_\Omega u+tRU_{\varepsilon,a}^{1-\gamma}-\int_\Omega u^{1-\gamma}\right)+\lambda\, tR\, \int_\Omega u^{-\gamma}U_{\varepsilon,a}.
\]
We consider a constant $0<\tau<\frac{1}{4}$.
We divide the domain in two parts
\begin{align*}
\din_\varepsilon &:=\{g\in\Omega:\ |a^{-1}\diamond g|_{\mathbb{G}}\leq \varepsilon^{\tau}\}\\
\dout_\varepsilon&:=\{g\in \Omega:\ |a^{-1}\diamond g|_{\mathbb{G}}>\varepsilon^{\tau}\}.
\end{align*}
We start by focusing on the inner domain.
Observe that $u+tRU_{\varepsilon,a}>u>0$,
therefore 
\begin{equation}\label{in}\tag{D-in}
\begin{aligned}
    -\frac{\lambda}{1-\gamma}&\left(\int_{\din_\varepsilon}(u+tRU_{\varepsilon,a})^{1-\gamma}-\int_{\din_\varepsilon} u^{1-\gamma}\right) +\lambda\, tR\, \int_{\din_\varepsilon} u^{-\gamma}U_{\varepsilon,a}\\
    &\leq \lambda\, tR\, \int_{\din_\varepsilon} u^{-\gamma}U_{\varepsilon,a}\\
    &(\m{by Lemma \ref{lem:2.2Haitao}})\\
    &\leq CtR\, \int_{\din_\varepsilon}U_{\varepsilon,a}\\
    &(\m{by estimate \eqref{eq:Stima_Bonf_Ugu}})\\
    &\leq CtR\, \int_{\din_\varepsilon}\frac{\varepsilon^{\frac{Q-2}{2}}}{{|a^{-1}\diamond g|_{\mathbb{G}}}^{Q-2}}\\
    &(\m{by the co-area formula})\\
    &=CtR\, \varepsilon^{\frac{Q-2}{2}}\, \int_0^{\varepsilon^\tau}r\d r=o\left(\varepsilon^{\frac{Q-2}{2}}\right).
\end{aligned}
\end{equation}
Indeed, after observing that for $\varepsilon$ sufficiently small
$\din_\varepsilon\Subset \Omega$,
we can apply point b) of Lemma~\ref{lem:2.2Haitao}
and obtain that $u$ is bounded from below
by a positive constant in $\din_\varepsilon$.
Moreover, as $u$ is in $L^1(\Omega)$ we can use
the co-area formula in the last step.\newline

We treat now the outer domain.
Consider the Taylor expansion of $(u+\theta t R U_{\varepsilon,a})^{1-\gamma}$
with respect to $\theta$ and evaluate it at $\theta=1$ obtaining 
\[
\frac{1}{1-\gamma}\, (u+tRU_{\varepsilon,a})^{1-\gamma}=\frac{u^{1-\gamma}}{1-\gamma}+u^{-\gamma}tRU_{\varepsilon,a}-\gamma (u+\theta_\varepsilon tRU_{\varepsilon,a})^{-1-\gamma}\, t^2R^2U_{\varepsilon,a}^2
\]
with $0<\theta_\varepsilon<1$ a real number.

Therefore, if $\supp(\varphi_a)\Subset\Omega$, we have
\begin{equation}\label{out}\tag{D-out}
\begin{aligned}
    -\frac{\lambda}{1-\gamma}\left(\int_{\dout_\varepsilon}(u+tRU_{\varepsilon,a})^{1-\gamma}-\int_{\dout_\varepsilon} u^{1-\gamma}\right)& +\lambda\, tR\, \int_{\dout_\varepsilon} u^{-\gamma}U_{\varepsilon,a}\\
    &=t^2R^2\, \int_{\dout_\varepsilon}\frac{\gamma\cdot U_{\varepsilon,a}^2}{(u+\theta_\varepsilon tRU_{\varepsilon,a})^{1+\gamma}}\\
    &(\m{by estimate \eqref{eq:Stima_Bonf_Ugu} and Lemma \ref{lem:2.2Haitao}})\\
    &\leq Ct^2R^2\, \int_{\dout_\varepsilon}\frac{\varepsilon^{Q-2}}{\varepsilon^{2\tau (Q-2)}}\\
    &\leq Ct^2R^2\, |\Omega|\, \varepsilon^{(Q-2)(1-2\tau)}\\
    &=o\left(\varepsilon^{\frac{Q-2}{2}}\right).
\end{aligned}
\end{equation}
Here we used the definition of $\dout_\varepsilon$
and the fact that by Lemma \ref{lem:2.2Haitao},
 $u+\theta_\varepsilon tRU_{\varepsilon,a}$ is bounded
 below by a positive constant on any  open set $\mathcal O\Subset\Omega$.
Therefore, we choose the cut-off function $\varphi$
as to have $\supp(\varphi_a)\Subset\Omega$.
This concludes our proof of Lemma \ref{lem:claim2.7haitao}.\fine

\subsubsection{Existence of the second solution}
By Lemma \ref{lem:claim2.7haitao}, we  see that
\begin{equation}\label{eq:eta}
	\widetilde \eta(t) := u_\lambda+tR_0U_{\varepsilon,a}\in \Gamma_\lambda\quad\text{for all $\varepsilon\in (0,\varepsilon_0)$}
\end{equation}
(by enlarging $R_0$ if needed),
and thus $\Gamma_\lambda\neq \varnothing$, as claimed.
Now we observe that, since it is non-empty,
this set $\Gamma_\lambda$ is a complete metric space endowed
with the distance $d$ introduced in~\eqref{maxdistance}.

Now,
we follow the idea of \cite[Lemma 3.5]{BadTar}.
Given a continuous and locally Lipschitz functional
such as $I_\lambda\colon {H_\lambda}\to \R$
we consider its generalized directional derivative
defined in the Appendix at \eqref{def:gdd}.
Observe that
\[
I_\lambda^0(x,y)=\int_\Omega\langle \nabla_\G x,\nabla_\G y\rangle_{\mathfrak{g}_1}-\lambda\int_\Omega x^{-\gamma}y-\int_\Omega x^{2^\star_Q-1}y.
\]
We then apply Lemma \ref{lem:appe}.
The $\Phi$ functional becomes
\[
\Phi(\eta):= \max_{t\in [0,1]}I_\lambda(\eta(t)).
\]
and we denote the minimax level as
\[
\ell_0:=\inf_{\eta\in\Gamma_\lambda}\Phi(\eta).
\]
From the application of the Ekeland's Variational Principle
and the lemma, we get
a sequence $\{\eta_k\}_k\in \Gamma_\lambda$
verifying
\begin{itemize}
	\item $\Phi(\eta_k)\leq \ell_0+\frac{1}{k}$
	\item $\Phi(\eta_k)\leq \Phi(\eta)+\frac{1}{k}d(\eta_k,\eta)$
\end{itemize}
and
we find another sequence $\{t_k\}_k\subset[0,1]$
such that
\begin{itemize}
	\item $\eta_k(t_k)\in H_\lambda$
	\item $I_\lambda(v_k)\to\ell_0$ as $k\to+\infty$
	\item there exists $C>0$ such that for every $w\in H_\lambda$, the following is verified,
	\begin{equation}\label{eq:badtarfin}
		\int_\Omega\langle \nabla_\G v_k,\nabla_\G(w-v_k)\rangle_{\mathfrak{g}_{1}} -\lambda\int_\Omega v_k^{-\gamma}(w-v_k)-\int_\Omega v_k^{2^\star_Q-1}(w-v_k)\geq
		-\frac{C}{k}(1+\|w\|_{S^{1}_{0}(\Omega)}).
	\end{equation}
\end{itemize}
In particular, if we choose $w=2v_k$ in \eqref{eq:gddeta},
we get
\[
\| v_k\|^2_{{S_0^1(\Omega)}}-\|v_k\|^{2^\star_Q}_{L^{2^\star_Q}(\Omega)}-\lambda\int v_k^{1-\gamma}\geq-\frac{1}{k}\max(1,\|v_k\|_{{S_0^1(\Omega)}}).
\]
Summing up this to the fact that $I_\lambda(v_k)\to \ell_0$ as $k\to+\infty$,
we obtain
\begin{equation}\label{eq:gamma0}
	\ell_0+o(1)\geq \left(\frac{1}{2}-\frac{1}{2^\star_Q}\right)\|v_k\|_{{S_0^1(\Omega)}}^2-\lambda\left(\frac{1}{1-\gamma}-\frac{1}{2^\star_Q}\right)\int v_k^{1-\gamma}.
\end{equation}


Since $\frac{1}{2}-\frac{1}{2^\star_Q}>0$, 
then if $v_k$ is unbounded in $S^1_0(\Omega)$
we would have that (up to a subsequence) $\|v_k\|_{S^{1}_{0}(\Omega)}\to+\infty$
and this contradicts \eqref{eq:gamma0}.
Therefore $v_k$ is bounded.
From \eqref{eq:gddeta} we get
\begin{equation}\label{eq:last}
	\|v_k\|_{S^{1}_{0}(\Omega)}^2-\lambda\int_\Omega v_k^{1-\gamma}-\int_\Omega v_k^{2^\star_Q}\geq
	-\frac{C}{k}(1+2\|v_k\|_{S^{1}_{0}(\Omega)}).
\end{equation}
\color{black}

Now, we can proceed as in 
the proofs of Lemmas \ref{lem:casoa1} and \ref{lem:casoa2},
to prove that
$v_k$ weakly converges (up to a subsequence) to a weak solution $v_\lambda$
of \eqref{eq:Main_Problem}$_\lambda$ and moreover
\begin{equation}\label{eq:convergence}
	\|v_k-v_\lambda\|_{S^{1}_{0}(\Omega)}^2-\|v_k-v_\lambda\|_{L^{2^\star_Q}}^{2^\star_Q}=o(1)\ \ \m{as}\ k\to+\infty.
\end{equation}

To complete this case, it remains to show that $v_\lambda\not \equiv u_\lambda$.
Observe that for any $\eta\in \Gamma_\lambda$,
\[
\|\eta(0)-u_\lambda\|_{S^{1}_{0}(\Omega)}=0\ \ \m{and}\ \ \|\eta(1)-u_\lambda\|_{S^{1}_{0}(\Omega)}>r_1,
\]
therefore there exists $t_\eta\in [0,1]$
such that $\|\eta(t_\eta)-u_\lambda\|_{S^{1}_{0}(\Omega)}=r_1$.
Since we are assuming \eqref{eq:case2},
we have
\[
\ell_0=\inf_{\Gamma_\lambda}\Phi(\eta)\geq\inf_{\Gamma_\lambda}I_\lambda(\eta(t_\eta))
\geq\inf_{\|u-u_\lambda\|_{S^{1}_{0}(\Omega)}=r_1} I_\lambda(u)>I_\lambda(u_\lambda)
\]
where the last infimimum is taken among the $u\in H_\lambda$
such that $\|u-u_\lambda\|_{S^{1}_{0}(\Omega)}=r_1$. At the same time,
if we consider $\widetilde\eta$ defined in \eqref{eq:eta},
then by \eqref{eq:lemma2.7Haitao}
we get,
\[
\ell_0\leq \Phi(\widetilde \eta)=\max_{t\in [0,1]}I_\lambda(\widetilde \eta(t))<
I_\lambda(u_\lambda)+\frac{1}{Q}S_\G^{Q\slash2}.
\]
Summing up,
\begin{equation}\label{eq:sumup}
	I_\lambda(u_\lambda)<\ell_0<I_\lambda(u_\lambda)+\frac{1}{Q}S_\G^{Q\slash2}.
\end{equation}
Observe that, since $v_k\to v_\lambda$ weakly in $S^1_0(\Omega)$, then
the equalities \eqref{eq:case1concl1}-\eqref{eq:case1concl2} hold also in this context.
Therefore, considering also \eqref{eq:sumup} and the fact that $I_\lambda(v_k)\to \ell_0$,
we get (for $k$ sufficiently large)
\begin{equation}\label{eq:sumup2}
	\begin{split}
		\frac{1}{2}&\|v_k-v_\lambda\|_{S^{1}_{0}(\Omega)}^2-\frac{1}{2^\star_Q}\|v_k-v_\lambda\|_{L^{2^\star_Q}(\Omega)}^{2^\star_Q}\\
		&=\frac{1}{2}(\|v_k\|_{S^{1}_{0}(\Omega)}^2-\|v_\lambda\|_{S^{1}_{0}(\Omega)}^2)-\frac{1}{2^\star_Q}(\|v_k\|^{2^\star_Q}_{L^{2^\star_Q}(\Omega)})+o(1)\\
		&=I_\lambda(v_k)-I_\lambda(u_\lambda)+o(1)\\
		&=\ell_0-I_\lambda(u_\lambda)+o(1)\\
		&<\frac{1}{Q}S_\G^{Q\slash2}-\delta_0
	\end{split}
\end{equation}
for some $\delta_0>0$ such that the last term is positive.
From \eqref{eq:convergence}, \eqref{eq:sumup} and \eqref{eq:sumup2},
by reasoning as in \cite[Proposition 3.1]{Tarantello},
we get that $v_k\to v_\lambda$ strongly in $S^1_0(\Omega)$.
This, together with \eqref{eq:last} and \eqref{eq:gamma0}, gives
\[
I_\lambda(u_\lambda)<\gamma_0=\lim_{k\to+\infty}I_\lambda(v_k)=I_\lambda(v_\lambda),
\]
thus implying that $u_\lambda\not\equiv v_\lambda$.\newline
\medskip

Gathering all the results established so far, we can finally provide the
\begin{proof}[Proof of Theorem \ref{thm:main}]
	Let $\Lambda$ be as in \eqref{eq:DefinitionLambda}, that is, 
    $$\Lambda := \sup \{ \lambda >0: \eqref{eq:Main_Problem}_\lambda \textrm{ admits a weak solution}\}.$$
    Taking into account Lemma \ref{lem:Lambdafinito}, we know that $\Lambda\in (0,+\infty)$; moreover,
    by combining Lemma \ref{lem:2.3Haitao} with the 
    computations in Sections \ref{subsec:CaseI}-\ref{subsec:CaseII}, we know that
    \vspace*{0.1cm}
    
    i)\,\,there exist \emph{at least two distinct weak solutions} $u_\lambda,v_\lambda$ of
    \eqref{eq:Main_Problem}$_\lambda$ for $\lambda\in(0,\Lambda)$;
    
    ii)\,\,there exists \emph{at least one weak solution} $u_\lambda$ of
    \eqref{eq:Main_Problem}$_\Lambda$.
    \vspace*{0.1cm}
    
    \noindent Hence, assertions a)-b) in the statement of the theorem are established. 
    
    Finally,
    by the very definition of $\Lambda$ we derive that \eqref{eq:Main_Problem}$_\lambda$
    \emph{does not admit} weak solutions when $\lambda > \Lambda$; this establishes also
    assertion c), and the proof is complete.
\end{proof}

\appendix
\section{Generalized directional derivative}\label{app:gdd}

We introduce the generalized directional derivative. Consider
the {convex cone $H_\lambda\subseteq S_0^1(\Omega)$} defined
at \eqref{def:tlambda}, and
consider any continuous and locally Lipschitz functional $I\colon {H_\lambda}\to \R$.
Moreover, to simplify the notation, in what follows we set
$$\|u\| = \|u\|_{S_0^1(\Omega)}\quad \text{for every $u\in H_\lambda$}.$$
The \cor{generalized directional derivative} at 
{$x,v\in H_\lambda$} is defined as
\begin{equation}\label{def:gdd}
	I^0(x,y):=\limsup_{\|h\|\to0,\ \rho\to0^+}\frac{I(x+h+\rho y)-I(x+h)}{\rho},
\end{equation}
\begin{lemma}\label{lem:subadd}
	For fixed $x$, the map $I\mapsto I^0(x,v)$ is subadditive with respect to $v$.
\end{lemma}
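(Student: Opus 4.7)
The plan is to prove the subadditivity $I^0(x,v_1+v_2) \le I^0(x,v_1) + I^0(x,v_2)$ by the classical Clarke-type telescoping argument. Fix $x, v_1, v_2 \in H_\lambda$. For admissible $h \in S_0^1(\Omega)$ (small) and $\rho > 0$ (small) the algebraic identity
$$\frac{I(x+h+\rho(v_1+v_2)) - I(x+h)}{\rho} = \frac{I((x+h+\rho v_1)+\rho v_2) - I(x+h+\rho v_1)}{\rho} + \frac{I(x+h+\rho v_1) - I(x+h)}{\rho}$$
splits the difference quotient defining $I^0(x,v_1+v_2)$ into two pieces.

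Next, I would take $\limsup_{\|h\|\to 0,\,\rho\to 0^+}$ on both sides and use the subadditivity of $\limsup$. The second summand on the right is, by definition, exactly $I^0(x,v_1)$. For the first summand I would perform the change of variables $h' := h + \rho v_1$; since $\|h'\| \le \|h\| + \rho\|v_1\| \to 0$ in the same limiting regime, the pairs $(h',\rho)$ generated in this way form a subfamily of all $(h'',\rho)$ with $\|h''\|\to 0$ and $\rho\to 0^+$. Consequently, the first limsup is bounded above by
$$\limsup_{\|h'\|\to 0,\,\rho\to 0^+}\frac{I(x+h'+\rho v_2) - I(x+h')}{\rho} = I^0(x,v_2),$$
and combining the two bounds gives the desired inequality.

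The one delicate point — and the main thing to address — is making sure that all the intermediate arguments $x+h$, $x+h+\rho v_1$, $x+h+\rho v_1+\rho v_2$ remain inside the domain $H_\lambda$ on which $I$ is defined, so that the difference quotients actually make sense. This is where the "cone" structure of $H_\lambda$ (i.e.\,the fact that $u\ge u_\lambda$ is preserved under adding non-negative perturbations, and that $v_1,v_2\ge u_\lambda>0$ a.e.\,in $\Omega$) is crucial: $x+\rho v_i \ge u_\lambda$ automatically, and the admissible $h$ considered in the definition of $I^0$ are implicitly restricted so that $x+h$ and $x+h+\rho v_i$ lie in $H_\lambda$; the change of variables $h'=h+\rho v_1$ is therefore compatible with this admissibility, and no boundary issues arise. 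Once this is verified the argument above closes the proof.
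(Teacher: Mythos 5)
Your proof is correct and follows essentially the same route as the paper: the identical telescoping identity $I(x+h+\rho(v_1+v_2))-I(x+h) = \bigl[I(x+h+\rho v_1+\rho v_2)-I(x+h+\rho v_1)\bigr] + \bigl[I(x+h+\rho v_1)-I(x+h)\bigr]$, followed by subadditivity of $\limsup$ and the change of variables $h' = h+\rho v_1$ on the first piece. The paper even asserts equality of the two limsups (noting that any small $h'$ can be written as $h+\rho v_1$), whereas you only need the one-sided bound you prove; your extra remarks on admissibility within the convex cone $H_\lambda$ are a sensible clarification that the paper leaves implicit.
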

\proof
We use the notation $I^*_{x,h,v}(\rho):=I(x+h+\rho v)-I(x+h)$.
Therefore,
\[
I^*_{x,h,v+w}(\rho)=I^*_{x,h+\rho v,w}(\rho)+I^*_{x,h,v}(\rho).
\]
Then, the lemma follows by observing that 
\[
\limsup_{\|h\|\to0,\ \rho\to0^+}I^*_{x,h+\rho v,w}(\rho)=\limsup_{\|h\|\to0,\ \rho\to0^+}I^*_{x,h,w}(\rho)
\]
because any $h'$ sufficiently small can be represented
as $h+\rho v$ for $h$ and $\rho$ sufficiently small.\fine

Moreover, we suppose that $I$ respects property \eqref{eq:case2}
and we apply the Ekeland's principle to the continuous functional 
\[
\Phi(\eta):=\max_{t\in[0,1]}I(\eta(t)),
\]
setting $ \ell:=\inf_{\eta\in \Gamma_\lambda}\Phi(\eta)$.
As a direct consequence, there exists
a sequence $\{\eta_k\}_k\subset \Gamma_\lambda$
verifying
\begin{align}
	\Phi(\eta_k)&\leq \ell+\frac{1}{k}\label{Phi1}\\
	\Phi(\eta_k)&\leq \Phi(\eta)+\frac{1}{k}d(\eta_k,\eta)\ \ \ \forall\eta\in \Gamma_\lambda\label{Phi2}
\end{align}
\begin{lemma}\label{lem:appe}
	For every $k$ consider $\Lambda_k:=\{t\in (0,1):\ I(\eta_k(t))=\max_{s\in[0,1]}I(\eta_k(s))\}$,
	then for every $k$ there exists $t_k\in \Lambda_k$ such that, $v_k:=\eta_k(t_k)$,
	we have
	\begin{equation}\label{eq:gddeta}
		I^0\left(v_k;\ \frac{w-v_k}{\max(1, \|w-v_k\|)}\right)\geq-\frac{1}{k}\ \ \ \forall w\in H_\lambda.
	\end{equation}
\end{lemma}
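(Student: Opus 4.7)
The plan is to argue by contradiction via a classical deformation scheme. Fix $k\in\mathbb{N}$ and suppose that for every $t\in\Lambda_k$ some $w_t\in H_\lambda$ achieves
$$I^0\bigl(\eta_k(t);\bar w_t\bigr)<-\tfrac{1}{k},\qquad \bar w_t:=\frac{w_t-\eta_k(t)}{\max(1,\|w_t-\eta_k(t)\|)}.$$
Since $I$ is locally Lipschitz on $H_\lambda$, the map $x\mapsto I^0(x;v)$ is upper semicontinuous (a standard property of the generalized directional derivative), and combined with the continuity of $\eta_k\colon[0,1]\to H_\lambda$ this yields, for each $t\in\Lambda_k$, an open interval $J_t\subseteq[0,1]$ about $t$ on which the analogous strict inequality persists with $\eta_k(s)$ in place of $\eta_k(t)$ for every $s\in J_t$.

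Next I would exploit the compactness of $\Lambda_k$ (the maximum level set of a continuous function) to extract a finite subcover $\{J_{t_i}\}_{i=1}^m$, a subordinate partition of unity $\{\psi_i\}_{i=1}^m$, and define
$$V(s):=\sum_{i=1}^{m}\psi_i(s)\,\frac{w_{t_i}-\eta_k(s)}{\max(1,\|w_{t_i}-\eta_k(s)\|)},\qquad s\in\bigcup_i J_{t_i}.$$
Because $H_\lambda$ is a \emph{convex cone} and each $w_{t_i}\in H_\lambda$, the point $\eta_k(s)+\rho V(s)$ will stay in $H_\lambda$ for all $\rho>0$ sufficiently small. Using the subadditivity of $I^0(x;\cdot)$ provided by Lemma \ref{lem:subadd} and the strict inequalities above, one then obtains a uniform estimate $I^0(\eta_k(s);V(s))<-1/k$ valid for every $s\in\bigcup_i J_{t_i}$.

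The argument is completed by a cutoff and a direct computation with $\Phi$. Choose $\phi\in C_c(\bigcup_i J_{t_i})$ with $\phi\equiv 1$ on $\Lambda_k$, $0\leq\phi\leq 1$, and $\phi$ vanishing near the endpoints $t=0,1$, then set
$$\eta_k^\rho(s):=\eta_k(s)+\rho\,\phi(s)\,V(s),\qquad s\in[0,1].$$
By the cone structure the path remains in $H_\lambda$; the cutoff guarantees the endpoint conditions of $\Gamma_\lambda$ are preserved; and $d(\eta_k,\eta_k^\rho)\leq C\rho$. From the very definition of $I^0$ together with the uniform bound from the previous step, one derives
$$I(\eta_k^\rho(s))\leq I(\eta_k(s))-\tfrac{\rho}{k}+o(\rho)$$
uniformly for $s$ in a neighborhood of $\Lambda_k$; off such a neighborhood, $I(\eta_k(s))$ is already strictly below $\Phi(\eta_k)$, and continuity of $I$ keeps $I(\eta_k^\rho(s))$ below $\Phi(\eta_k)$ for $\rho$ small. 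Taking the maximum in $s$ yields $\Phi(\eta_k^\rho)\leq \Phi(\eta_k)-\rho/k+o(\rho)$, which contradicts \eqref{Phi2} once $\rho$ is small enough and establishes the existence of the desired $t_k$.

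The main obstacle will be the interaction between the normalization $\max(1,\|w-x\|)$ and the partition-of-unity step: $V(s)$ is a convex combination of the \emph{already normalized} directions, and its own normalized form need not coincide with the same combination. A clean way around this is to first work with the \emph{un-normalized} directions $w_{t_i}-\eta_k(s)$, apply Lemma \ref{lem:subadd} together with a positive-homogeneity-type bound for $I^0$ in its second slot, and only at the end reinstate the normalization; alternatively, using that $H_\lambda$ is a cone, one may rescale the $w_{t_i}$ to share a common normalization. Either route rests on two ingredients already in hand, namely Lemma \ref{lem:subadd} and the upper semicontinuity of $I^0(\cdot;v)$.
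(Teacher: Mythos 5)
Your argument is essentially the paper's own: contradiction, compactness of $\Lambda_k$, upper semicontinuity along the curve to enlarge each point to a neighborhood, a subordinate partition of unity, a cutoff vanishing at $t=0,1$, and Ekeland's inequality \eqref{Phi2}. You also correctly flag the normalization issue and propose the paper's own fix: apply Lemma \ref{lem:subadd} together with positive homogeneity of $I^0(x,\cdot)$ to the \emph{un-normalized} differences $\omega_i-\eta_k(s)$, form $\omega(s)=\sum_i g_i(s)\omega_i$, and normalize $\omega(s)-\eta_k(s)$ only at the end, using $\sum_i g_i(s)\max(1,\|\omega_i-\eta_k(s)\|)\ge\max(1,\|\omega(s)-\eta_k(s)\|)$. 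The one place where you are quicker than the paper is the final contradiction: $I^0(\eta_k(s);V(s))<-1/k$ is a $\limsup$ over perturbations and scalings, so it does not immediately deliver a pointwise estimate $I(\eta_k^\rho(s))\le I(\eta_k(s))-\rho/k+o(\rho)$, and the claimed uniformity in $s$ is not automatic. The paper circumvents this by working with a sequence of maximizers $t_{k,\varepsilon_n}$ of the deformed paths converging to a single $t_k\in\Lambda_k$, and passing to the $\limsup$ defining $I^0$ only at that fixed base point with perturbations $h_n,h_n^*$ satisfying $h_n^*-h_n=o(\varepsilon_n)$; your variant can be repaired in the same way, or by combining the upper semicontinuity argument with a finite cover to obtain a uniform modulus in $s$.
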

\proof
Observe that $\Lambda_k$ is always a compact subset of $(0,1)$.
We star by considering the case when $\Lambda_k$ is a single point $t_k$.

We argue by contradiction supposing that there exists $\omega\in H_\lambda$
such that 
\begin{equation}\label{assumption}
	I^0\left(\eta_k(t_k);\ \frac{\omega-\eta_k(t_k)}{\max(1,\|\omega-\eta_k(t_k)\|}\right)<-\frac{1}{k}.
\end{equation}
Consider a continuous cut-off function $g\colon[0,1]\to [0,1]$ such that $g\equiv 1$ in a neighborhood
of $t_k$ and $g(0)=g(1)=0$. We also define $m_k(t):=\max(1,\|\omega-\eta_k(t)\|)$
and we introduce
\[
\eta_{k,\varepsilon}(t):=\eta_k(t)+\frac{g(t)\varepsilon}{m_k(t)}(\omega-\eta_k(t)).
\]
By construction $\eta_{k,\varepsilon}(t)\in \Gamma_\lambda$ for every $t\in [0,1]$.
Therefore by \eqref{Phi2} we obtain
\begin{equation}\label{ineq:badtar}
	\max_{t\in [0,1]} I(\eta_k(t))\leq \max_{t\in [0,1]}I(\eta_{k,\varepsilon}(t))+\frac{\varepsilon}{k} \max_{t\in[0,1]}\frac{g(t)\|\omega-\eta_k(t)\|}{m_k(t)}.
\end{equation}
If $t_{k,\varepsilon}$ is a point where $I(\eta_{k,\varepsilon})$ reaches its max,
then there exists $\varepsilon_n\to0$ sequence such that $t_{k,\varepsilon_n}\to t_k$.
Therefore $g(t_{k,\varepsilon_n})=1$ for $n$ sufficiently big. We denote by $v_{k,n}:=\eta_k(t_{k,\varepsilon_n})$
and $m_{k,n}:=\max(1,\|\omega-v_{k,n}\|)$ (observe that this is a scalar value),
then
\begin{equation}\label{ineq:badtar2}
	I(v_{k,n})\leq I(v_k)\leq I(\eta_{k,\varepsilon_n}(t_{k,\varepsilon_n}))+\frac{\varepsilon_n}{k}
	=I\left(v_{k,n}+\frac{\varepsilon_n}{m_{k,n}}(\omega-v_{k,n})\right)+\frac{\varepsilon_n}{k}.
\end{equation}
where the first inequality comes from the fact that $v_k=\eta_k(t_k)$
is the max for the $I$ value for any $\eta_k(t)$,
and the second inequality comes from \eqref{ineq:badtar}.\newline

We set $m_k^*:=\max(1,\|\omega-v_k\|)$, then
by the continuity of $\eta_k$,  we get
$v_{k,n}\to v_k$ in $S^1_0(\Omega)$ and $m_{k,n}\to m_k^*$
as $n\to\infty$.
Therefore, we can re-write \eqref{ineq:badtar2} obtaining
\[
\frac{1}{\varepsilon_n}\left(I\left(v_k+h_n^*+\varepsilon_n \frac{\omega-v_k}{m_k^*}\right)-I(v_k+h_n)\right)\geq-\frac{1}{k}
\]
with
\begin{align*}
	h_n&=v_{k,n}-v_k\\
	h_n^*&=v_{k,n}-v_k+\frac{\varepsilon_n}{m_{k,n}}(\omega-v_{k,n})-\frac{\varepsilon_n}{m_k^*}(\omega-v_k).
\end{align*}
By what we proved, we derive that $h_n^*-h_n=o(\varepsilon_n)$ as $n\to\infty$ and
this in turn implies
\[
I^0\left(v_k,\frac{\omega-v_k}{m_k^*}\right)\geq-\frac{1}{k},
\]
thus contradicting the assumption \eqref{assumption}.\newline

If $\Lambda_k$ has more than one element, we have to correct the above proof.
For any $t_k\in \Lambda_k$, consider the $\omega\in H_\lambda$
that satisfies \eqref{assumption} for $t_k$.\newline

\cor{Claim.} 
For any $t_k\in \Lambda_k$ there exists a neighborhood
$J$ of $t_k$ such that
\begin{equation}\label{assumption2}
	I^0\left(\eta_k(s),\frac{\omega-\eta_k(s)}{\max(1,\|\omega-\eta_k(s)\|)}\right)<-\frac{1}{k}\ \ \ \ \forall s\in J.
\end{equation}\newline

Let's suppose that the claim above is correct and conclude.
By compactness of $\Lambda_k$, there exist $t_k^{(1)},\dots,t_k^{(r)}$ elements of $\Lambda_k$
and their associated neighborhoods $J_1,\dots,J_r$, such that
\eqref{assumption} is verified on $J_i$ by some $\omega_i\in H_\lambda$ 
and $\Lambda_k\subset \cup_i J_i\subset [0,1]$.
For an opportune partition of the unit $g_1,\dots,g_r$ associated to the $J_i$s,
we can update $\omega$ to a continuous function $\omega\colon J:=\cup_iJ_i\to H_\lambda$,
\[
\omega(s):=\sum_{i=1}^rg_i(s)\omega_i.
\]
As a consequence of the subadditivity proved in Lemma \ref{lem:subadd},
we have
\[
I^0(\eta_k(s),\omega(s)-\eta_k(s))\leq -\frac{1}{k} \sum_{i = 1}^r g_i(s) \max(1,\|\omega_i-\eta_k(s)\|).
\]
Moreover,
\[
\sum_{i = 1}^r g_i(s)\|\omega_i-\eta_k(s)\|\geq \left\|\sum_{i = 1}^r g_i(s)(\omega_i-\eta_k(s))\right\|=\|\omega(s)-\eta_k(s)\|,
\]
and therefore we get
\begin{equation}\label{assumption3}
	I^0\left(\eta_k(s),\frac{\omega(s)-\eta_k(s)}{\max(1,\|\omega(s)-\eta_k(s)\|)}\right)<-\frac{1}{k}\ \ \ \forall s\in J.
\end{equation}

From here the proof can continue as before if we replace every appearance of $\omega$ with 
$\omega(t)$ (with the opportune value of $t$)
and if we take the cut-off function $g$ to be $1$ on $\Lambda_k$ and $0$ outside $J$.

In this case $t_{k,\varepsilon_n}\to t_k$ for \cor{some} $t_k\in \Lambda_k$,
$v_k=\eta_k(t_k)$
and therefore we derive \eqref{ineq:badtar}
with $\omega(t_{k,\varepsilon_n})$ in place of $\omega$.
We also have to update the definition of $h_n^*$ using
$\omega(t_{k,\varepsilon_n})$ when necessary.\newline

It remains to prove the claim above.
Consider the function
\[
{\bf{w}}(s):=\frac{\omega-\eta_k(s)}{\max(1,\|\omega-\eta_k(s)\|)}
\]
which by definition is continuous.
By \eqref{assumption}, there exists $\widetilde \varepsilon, \widetilde \delta,\widetilde \rho$ all positive
such that 
\begin{equation}\label{eq:barbatrucco}
	\sup_{\|h\|<\widetilde \delta,\ \rho<\widetilde \rho}\frac{I(\eta_k(t_k)+h+\rho{\bf w}(t_k))-I(\eta_k(t_k)+h)}{\rho}<-\frac{1}{k}-\widetilde \varepsilon.
\end{equation}
We give a first definition of $J$ by imposing $\|\eta_k(s)-\eta_k(t_k)\|<\frac{\widetilde \delta}{2}$.
Then, for any $h\in H_\lambda$ with $\|h\|<\frac{\widetilde \delta}{2}$ we have
\[
\eta_k(s)+h=\eta_k(t_k)+h',\ \ \m{with}\ \|h'\|<\widetilde \delta.
\]
Therefore we have,
\begin{align*}
	I^0(\eta_k(s),{\bf w}(s))&\leq \sup_{\|h\|<\frac{\widetilde \delta}{2},\ \rho<\widetilde \rho}
	\frac{I(\eta_k(s)+h+\rho {\bf w}(s))-I(\eta_k+h)}{\rho}\\
	&\leq \sup_{\|h'\|<\widetilde \delta,\ \rho<\widetilde \rho}
	\frac{I(\eta_k(t_k)+h'+\rho {\bf w}(s))-I(\eta_k(t_k)+h')}{\rho}\\
	&=\sup_{\|h'\|<\widetilde \delta,\ \rho<\widetilde \rho}\Biggl(
	\frac{I(\eta_k(t_k)+h'+\rho{\bf w}(s))-I(\eta_k(t_k)+h'+\rho{\bf w}(t_k))}{\rho}\\
	\vspace{12pt}
	&\quad \quad \quad \quad \quad \quad \quad
	+\frac{I(\eta_k(t_k)+h'+\rho{\bf w}(t_k))-I(\eta_k(t_k)+h')}{\rho}\Biggl)
\end{align*}
Observe that the first therm (in the last sum) can be made arbitrarily small (up to stretching~$J$) by using that 
$I$ is locally Lispchitz, while the second term respects \eqref{eq:barbatrucco}. Therefore the claim is proved
and this concludes our proof.\fine

 \end{document}